\documentclass[11pt,a4paper]{amsart}
\usepackage[T1]{fontenc}
\usepackage{lmodern}
\usepackage{amsmath,amssymb,amsthm,mathtools}
\usepackage{microtype}
\usepackage[textwidth=155mm,textheight=235mm,centering,headheight=14pt,headsep=7mm]{geometry}
\usepackage{enumitem,needspace,booktabs}
\usepackage{xurl}
\usepackage[hidelinks]{hyperref}
\DeclareMathOperator{\Irr}{Irr}

\DeclareMathOperator{\Bl}{Bl}
\DeclareMathOperator{\bl}{bl}
\DeclareMathOperator{\GL}{GL}

\DeclareMathOperator{\tr}{tr}
\DeclareMathOperator{\Syl}{Syl}
\DeclareMathOperator{\Aut}{Aut}
\DeclareMathOperator{\Inn}{Inn}

\DeclareMathOperator{\Gal}{Gal}

\newtheorem{lemma}{Lemma}[section]
\newtheorem{theorem}[lemma]{Theorem}
\newtheorem{proposition}[lemma]{Proposition}
\newtheorem{corollary}[lemma]{Corollary}
\theoremstyle{definition}
\newtheorem{definition}[lemma]{Definition}
\newtheorem{hypothesis}[lemma]{Hypothesis}

\theoremstyle{remark}
\newtheorem{remark}[lemma]{Remark}
\theoremstyle{plain}
\newtheorem*{maintheoremA}{Theorem A}

\numberwithin{equation}{section}
\setlist[enumerate,1]{label=\textup{(\roman*)},ref=\roman*,leftmargin=2.6em}
\date{}
\title[A reduction theorem for AMN]{A reduction theorem for the
Alperin--McKay--Navarro conjecture}
\author{Shi Chen}
\address{School of Mathematics and Statistics, Central China Normal University,
Wuhan 430079, China}
\email{chenshitjnu@163.com}
\subjclass[2020]{20C15, 20C20, 20C25}
\keywords{Alperin--McKay--Navarro conjecture, character triples,
Galois automorphisms, blocks, Dade--Glauberman--Nagao correspondence}
\hypersetup{pdftitle={A reduction theorem for the Alperin-McKay-Navarro conjecture},pdfauthor={Shi Chen}}
\begin{document}
\begin{abstract}
We reduce the Alperin--McKay--Navarro conjecture to an inductive condition
on the universal covering groups of non-abelian finite simple groups.
The reduction yields height-zero character bijections compatible with
Brauer correspondence and block relations between $\mathcal H$-triples
for arbitrary finite ambient groups. We record the resulting arithmetic
and structural consequences, including preservation of $p$-rationality
levels and character-theoretic criteria for Sylow subgroups.
The proof combines semilinear
centralization and transfer through quasisimple components with
induction on the central index. It uses the Clifford and gluing theorems
for block relations and a central-defect Dade--Glauberman--Nagao
correspondence established in two related papers. We also verify the
inductive condition for several sporadic groups at primes for which
the Sylow subgroups have prime order, and in defining characteristic
for the Suzuki groups ${}^2B_2(2^{2m+1})$ with $m\geq2$ and the
small Ree groups ${}^2G_2(3^{2m+1})$ with $m\geq1$.
\end{abstract}
\maketitle
\section*{Introduction}

Let $G$ be a finite group, let $p$ be a prime, and let $b$ be a
$p$-block of $G$ with defect group $D$. Write $\Irr_0(b)$ for the
set of irreducible ordinary characters of height zero in $b$.
The Alperin--McKay conjecture \cite{Alp76} predicts that
\[
 |\Irr_0(b)|=|\Irr_0(c)|,
\]
where $c$ is the Brauer correspondent of $b$ in $N_G(D)$.

Reduction theorems have made it possible to approach this conjecture
through the classification of finite simple groups. Following the
reduction of the McKay conjecture by Isaacs, Malle and Navarro
\cite{IMN07}, Sp\"ath \cite[Theorem~C]{Sp13} reduced the
Alperin--McKay conjecture to an inductive condition on the covering
groups of non-abelian finite simple groups. This condition requires
character bijections compatible with the extensions and quotients
that occur in the reduction.

Several families of blocks are now understood in this framework.
Koshitani and Sp\"ath verified the inductive Alperin--McKay condition
for blocks with cyclic defect groups \cite{KS16a,KS16b}. For groups
of type~$A$, Cabanes and Sp\"ath treated blocks of maximal defect
\cite{CS14}, and Brough and Sp\"ath obtained further results for
linear and unitary groups \cite{BS20}. Ruhstorfer subsequently
verified the condition for all blocks of quasisimple groups of
type~$A$ at primes $p\geq5$ \cite[Theorem~C]{Ruh22}. At the
prime~$2$, Brough and Ruhstorfer proved the conjecture for blocks of
maximal defect \cite{BR22}, and Ruhstorfer established it for blocks
with abelian defect groups \cite[Theorem~F]{Ruh22}. These advances
culminated in Ruhstorfer's proof of the Alperin--McKay conjecture for
all finite groups at the prime~$2$ \cite{Ruh25}.

Navarro's refinement \cite{Nav04} also takes character values into
account. Let $\mathbb Q^{\mathrm{ab}}$ be the field generated by all
roots of unity over $\mathbb Q$. For the fixed prime $p$, let
$\mathcal H=\mathcal H_p\leq
\Gal(\mathbb Q^{\mathrm{ab}}/\mathbb Q)$ consist of those
$\sigma$ whose action on roots of unity of order prime to $p$
is an integral power of the automorphism $\xi\mapsto\xi^p$. The Alperin--McKay--Navarro conjecture predicts that,
for every $\sigma\in\mathcal H$,
\[
 |\Irr_0(b)^\sigma|=|\Irr_0(c)^\sigma|,
 \qquad
 \Irr_0(b)^\sigma
   =\{\chi\in\Irr_0(b)\mid\chi^\sigma=\chi\}.
\]

We consider the stronger equivariant formulation, which asks for an
$\mathcal H_b$-equivariant bijection
\[
 \Irr_0(b)\longrightarrow\Irr_0(c),
\]
where $\mathcal H_b$ is the stabilizer of $b$ in $\mathcal H$.
Brauer correspondence commutes with the Galois action, so
$\mathcal H_b=\mathcal H_c$. Such a bijection identifies the fixed
points of every element of this stabilizer. Outside the stabilizer,
both fixed-point sets are empty. Thus this equivariant formulation
implies the stated fixed-point formula; see also
Proposition~\ref{red:fixedpoints} and Corollary~\ref{r7:4}.

This formulation relates the arithmetic of character values to local
block theory. It preserves the $p$-rationality levels of corresponding
height-zero characters, as discussed in \cite[Section~7.1]{HS26}.
The study of fields of values of $p'$-degree characters in
\cite{NT21} provides a broader context for these arithmetic
questions. In particular, the equivariant formulation predicts
local formulas for the numbers of $p$-rational and almost
$p$-rational height-zero characters.

There are also concrete consequences for the structure of Sylow
subgroups. Galois-fixed character counts in principal blocks detect
cyclic Sylow subgroups at the primes $2$ and $3$ \cite{RSV20}, and
the minimum number of generators of a Sylow $2$-subgroup is two
precisely when the appropriate fixed-point count is four
\cite{NRSV21}. These results have independent proofs and are also
consequences of AMN. At the prime $3$, AMN predicts an analogous
criterion with fixed-point count six or nine; one direction was
proved in \cite[Theorem~A]{KMRS26}. Further consequences include
self-normalizing Sylow criteria \cite[Section~5]{Nav04} and the
lower bound for almost $p$-rational characters in principal blocks
proved in \cite[Theorem~A]{MMRSV26}. We record precise statements,
with their hypotheses and attribution, in
Section~\ref{sec:consequences}.

The Galois refinement is known for $p$-solvable groups by work of
Turull \cite{Tur13}, in a stronger form incorporating $p$-adic
fields of values and local Schur indices, and for alternating groups
by Brunat and Nath \cite{BN21}. Further cases include blocks with
cyclic defect groups \cite{Nav04} and blocks with Klein four defect
groups \cite{Hua23}. In non-defining characteristic, Huang, Li and
Zhang proved Turull's refinement for unipotent blocks of general
linear groups with abelian defect groups
\cite[Corollary~1.3]{HLZ24}. The latter results illustrate the role
of descent of equivalences in constructing character correspondences
compatible with Galois actions.

The purpose of this paper is to reduce the equivariant formulation
to an inductive condition, called iAMN, on the universal covering
groups of non-abelian finite simple groups
(Definition~\ref{r6:1}). The conclusion also includes block
relations between $\mathcal H$-triples for arbitrary finite ambient
groups. To state it, for a $p$-subgroup $D\leq X$ write
$\Irr_0(X\mid D)$ for the set of height-zero characters lying in
blocks of $X$ having the specified subgroup $D$ as a defect group.
If $X\trianglelefteq A$, we call $A$ an \emph{ambient group} of
$X$ and write $A_{\chi^{\mathcal H}}$ for the stabilizer of the
Galois orbit $\chi^{\mathcal H}$ in $A$. The block relation
$\geq_b$ is defined in Definition~\ref{red:block}.

\begin{maintheoremA}
Fix a prime $p$. Suppose that every non-abelian finite simple group
satisfies the iAMN condition for $p$. Let $X\trianglelefteq A$, where
$A$ is a finite group, and let $D\leq X$ be a $p$-subgroup. Then there
is an $N_A(D)\times\mathcal H$-equivariant bijection
\[
 \Omega_D:\operatorname{Irr}_0(X\mid D)
 \longrightarrow \operatorname{Irr}_0(N_X(D)\mid D).
\]
For each block $b$ of $X$ with defect group $D$, this bijection maps
$\operatorname{Irr}_0(b)$ onto $\operatorname{Irr}_0(c)$, where $c$ is
the Brauer correspondent of $b$ in $N_X(D)$. Moreover, if
$\chi'=\Omega_D(\chi)$, then
\[
 (A_{\chi^{\mathcal H}},X,\chi)_{\mathcal H}
 \geq_b
 \bigl((N_A(D))_{(\chi')^{\mathcal H}},
        N_X(D),\chi'\bigr)_{\mathcal H}.
\]
In particular, the Alperin--McKay--Navarro conjecture holds for all
finite groups at $p$.
\end{maintheoremA}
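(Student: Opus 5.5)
We follow the pattern of Späth's reduction of the Alperin--McKay conjecture \cite[Theorem~C]{Sp13}, adding the Galois group $\mathcal H$ throughout. The argument is a minimal-counterexample argument on the pair $(X,A)$: we prove the block-relation statement $\geq_b$ between $\mathcal H$-triples directly, and obtain the bijection $\Omega_D$ and the fixed-point formula from it as a consequence. Fix $p$ and choose a counterexample $X\trianglelefteq A$ with $|X:Z(X)|$ minimal, and then $|X|$ minimal. Equivariance under $N_A(D)\times\mathcal H$ costs little. We construct the bijection on one $N_A(D)\times\mathcal H$-orbit of $\Irr_0(X\mid D)$ at a time and extend it equivariantly. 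Because the relation $\geq_b$ is stated for the full orbit stabilizers $A_{\chi^{\mathcal H}}$, this extension is well defined and still respects Brauer correspondence. Brauer correspondence commutes with the actions of $N_A(D)$ and of $\mathcal H$, so the refinement to $\Irr_0(b)\to\Irr_0(c)$ is automatic once the block relation is known.

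The first reduction step uses the Clifford and gluing theorems for block relations. If $X$ has a proper normal subgroup $K$ that is not central, and $K$ is chosen $A$-invariant, we pass from $X$ to $K$ by Clifford theory. A height-zero character of $X$ in a block with defect group $D$ lies over a character $\theta$ of $K$ in a block with defect group $D\cap K$. Induction gives the relation for $(A_{\theta^{\mathcal H}},K,\theta)_{\mathcal H}$. The Clifford theorem for $\geq_b$ then lifts this relation to $X$, by a Fong--Reynolds and Dade-type argument over the stabilizer $X_\theta$ together with the normalizers $N_X(D\cap K)$ and $N_X(D)$. The gluing theorem assembles these local pieces into a relation for $X$ itself. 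In this way we reduce to the case where every $A$-invariant proper normal subgroup of $X$ is central. Then $X/Z(X)$ is characteristically simple, so $X$ is a central product of quasisimple components together with a central part.

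Two cases remain for $X$ of this form, and each is handled by semilinear centralization and transfer through the quasisimple components. If $X/Z(X)$ is abelian, then $X$ is nilpotent of the relevant shape, and the statement follows from induction on the central index together with the central-defect Dade--Glauberman--Nagao correspondence. That correspondence handles the normal $p'$-part, and the Galois action is compatible with it by the companion paper. If $X/Z(X)\cong S^n$ with $S$ non-abelian simple, let $G$ be the universal covering group of $S$. The iAMN condition (Definition~\ref{r6:1}) gives an $\mathcal H$-triple relation for $G$ inside $G\rtimes\Aut(G)$. We transfer it to the $n$-fold product by a wreath-product (tensor-induction) construction. The semilinear centralization step compares the $\mathcal H$-actions on the factors when $\mathcal H$ and $A$ together permute them. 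We then descend to the quotient $X$ of $G^n$ by a central subgroup, using that the relation $\geq_b$ passes to central quotients when the characters involved are trivial on the quotiented centre.

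We expect this last step to be the main obstacle. The components must be combined with $\mathcal H$ acting semilinearly, meaning that Galois automorphisms twist the projective representations attached to the character triples, and the resulting relation must be shown to survive the central product while staying compatible with Brauer correspondence at the defect group $D$. Here I would first try to prove a product theorem for $\mathcal H$-triple block relations under wreath products, modelled on Späth's argument with the Galois twist handled by cohomology classes with values in $\mathbb Q^{\mathrm{ab}}$. The final assertion follows directly. Taking $A=X$, the bijection is $\mathcal H_b$-equivariant on $\Irr_0(b)$, so the fixed-point counts agree as in Proposition~\ref{red:fixedpoints}, and this gives the Alperin--McKay--Navarro conjecture.
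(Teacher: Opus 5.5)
\textbf{There is a genuine gap: your first reduction does not reach the case you claim.} You apply the statement to a non-central $A$-invariant $K\trianglelefteq X$ with the $p$-subgroup $D_0=D\cap K$. The resulting relation has its local side in $N_A(D_0)$, so Clifford transfer can only carry you from $X$ to $X\cap N_A(D_0)=N_X(D_0)$. This gives no progress whenever $D_0\trianglelefteq X$, in particular whenever $D\cap K\leq Z(X)$.

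Take $X=A=S_3$ and $p=2$, or more generally $X=K\rtimes D$ with $K$ a $p'$-group and $|D|=p$ acting fixed-point-freely. Then $Z(X)=1$ and every proper normal subgroup meets $D$ trivially, so your step returns $X$ itself. Yet $X/Z(X)$ is neither abelian nor of the form $S^n$ with $S$ non-abelian simple, so none of your remaining cases applies. Hence the assertion that you may assume every $A$-invariant proper normal subgroup is central is unjustified. This $p$-solvable, Glauberman-type configuration is exactly where a new input is needed.

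Conversely, the DGN correspondence is not needed where you place it. If $X/Z(X)$ is abelian, then $X$ is nilpotent and its normal Sylow $p$-subgroup is the defect group of every block. So $N_X(D)=X$ and the identity map works.

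The paper argues by induction on $m(X)$ alone, in three cases.
\begin{enumerate}
\item For $K\trianglelefteq A$ with $Z(X)<K\leq X$, it does \emph{not} apply induction to $K$. Over each $\zeta\in\Irr_0(K\mid D\cap K)$ it builds a finite central extension that linearizes the projective representation of $A_\zeta$ attached to $\zeta$. It realizes the mixed stabilizer $(A_{\zeta^{\mathcal H}}\times\mathcal H)_\zeta$ as a finite group $F$ of automorphisms. It then applies induction to the quotient $\widehat T/K_0$, where $T=X_\zeta$, of central index at most $|X:K|<m(X)$, inside the finite ambient group $\overline B\rtimes F$. That is the semilinear centralization (Proposition~\ref{r4:18}, glued in Lemma~\ref{r7:3}); it is not a device for comparing Galois actions on permuted components. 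It gives $\Irr_0(X\mid D)\to\Irr_0(KN_X(D)\mid D)$, so one may assume $X=KN_X(D)$ for all such $K$.
\item Only after that reduction, if some such $K$ has $K\cap D\leq Z(X)$, is $V=KD$ normal in $A$ with $A=VN_A(D)$. Then Corollary~\ref{r3:2} followed by normal-subgroup transfer finishes.
\item Otherwise both reductions give $F(X)\leq Z(X)$. The layer $E(X)$ is then non-central, $D\cap E(X)$ is a non-central radical subgroup, and Theorem~\ref{r6:7} applies.
\end{enumerate}
Note also that iAMN only reaches some $L_R\geq N_U(R)$. So the component transfer lands in a proper $M\geq N_K(D_0)$, and you still need induction on $Y=MN_X(D)<X$. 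Your component step goes straight to $N_X(D)$ and omits this.
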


The theorem makes the arithmetic and structural consequences of AMN
accessible through an inductive condition on simple groups. The
remaining task is to verify that condition for all such groups at
the chosen prime. The compatibility with ambient groups and block
relations is needed to pass from these verifications to arbitrary
finite groups.

The inductive condition requires an equivariant bijection that
preserves central characters and matches blocks with the same
Brauer correspondent in the relevant normalizer. For each pair of
corresponding characters, it imposes a block relation between
$\mathcal H$-triples after factoring out the kernel of the common
central character. Lemma~\ref{r6:2} gives a criterion for this
relation.

Our framework combines the block character-triple methods of
Navarro and Sp\"ath \cite{NS14} with the theory of
$\mathcal H$-triples used by Navarro, Sp\"ath and Vallejo in their
reduction of the Galois--McKay conjecture \cite{NSV20}.
Character-triple methods also underlie the reduction of Dade's
projective conjecture \cite{Sp17}. Here the required compatibility
with both the mixed group--Galois action and block induction must
be realized by a single pair of associated projective
representations. The same pair determines the factor sets, central
scalars and mixed comparison functions, as well as the character
correspondences on intermediate groups. Under each such
correspondence, the block of the local character must induce to the
block of its global correspondent.

Two related papers provide the transfer results used in the proof.
The Clifford and gluing theorems of \cite{ChenCT} assemble
correspondences above characters of normal subgroups. Their
conclusions give the full block $\mathcal H$-triple relation and
allow the indices of the inducing subgroups on the two sides to
differ. The central-defect Dade--Glauberman--Nagao correspondence
of \cite{ChDGN} allows a nontrivial central intersection with the
defect group and supplies the required relation in that case.
Its integral and multiplicity-algebra constructions use Ladisch's
magic-representation approach \cite{Lad11} and Fu's work on
Galois-equivariant Brauer-character correspondences \cite{Fu26}.
Section~\ref{sec:inputs} records the precise inputs and how they
are used.

The proof proceeds by induction on the central index $|X:Z(X)|$,
simultaneously for all finite ambient groups $A$. This follows the
normal-subgroup reduction strategy in Murai's work
\cite{Mur04,Mur11}. Allowing the ambient group to vary is essential
in the semilinear centralization step: induction is applied to a
group of smaller central index inside an auxiliary finite semidirect
product. The final argument separates three cases. A suitable
normal subgroup first gives descent to a proper subgroup. If this
case does not occur, a central intersection with the defect group
is treated by the Dade--Glauberman--Nagao correspondence, while the
remaining component case is treated by transferring the inductive
condition to direct products and central quotients of universal
covering groups.

We also verify iAMN directly for non-abelian finite simple groups
$S$ with trivial Schur multiplier, trivial outer automorphism group
and $|S|_p=p$. The proof combines the cyclic-defect correspondence
of \cite{Nav04} with explicit representations affording the block
$\mathcal H$-triple relation for the inner-automorphism action.
This yields the cases for $M_{11}$, $J_1$ and $M_{23}$ listed in
Section~\ref{sec:verification}. We also verify iAMN in defining
characteristic for the Suzuki groups ${}^2B_2(2^{2m+1})$ with
$m\geq2$ and the small Ree groups ${}^2G_2(3^{2m+1})$ with
$m\geq1$ (Theorem~\ref{r8:5}). We use the equivariant bijections
and character extensions constructed by Johansson~\cite{Joh22}.
The trace criterion of \cite[Theorem~4.4]{NS14} shows that the
same representations also satisfy the block condition on every
intermediate group.

Section~\ref{sec:inductive} fixes the notation and states the
inductive condition. Section~\ref{sec:inputs} records the Clifford,
gluing and Dade--Glauberman--Nagao results used in the proof.
Section~\ref{sec:centralization} establishes semilinear
centralization and descent, and Section~\ref{sec:components}
transfers the inductive condition through quasisimple components.
These results are combined in the reduction proof in
Section~\ref{sec:reduction}, which also records the arithmetic and
structural consequences. Section~\ref{sec:verification}
contains the simple-group verifications.

\section{Notation and the inductive condition}\label{sec:inductive}
\subsection{Notation and character fibers}
We work throughout with ordinary characters of finite groups. Fix a
prime $p$, and let $\mathcal H=\mathcal H_p$ be the Galois group defined
in the introduction. Although $\mathcal H$ is infinite, its action on
the characters of any fixed finite group factors through a finite
quotient. We use right actions, with the convention
\[
 \chi^{h\sigma}(x)=\sigma\bigl(\chi(hxh^{-1})\bigr),
 \qquad (\chi^a)^b=\chi^{ab}.
\]
For $X\trianglelefteq A$, the group $A$ is called an \emph{ambient group}
of $X$. Statements quantified over all ambient groups allow any finite
group $A$ in which $X$ is normal. Write $A_\chi$ for the inertia group and
\[
 A_{\chi^{\mathcal H}}=\{a\in A\mid\chi^a\in\chi^{\mathcal H}\}.
\]
The superscript $\mathcal H$ denotes a Galois orbit. For conjugation by
a group $T$, we write the orbit as $\{\chi^t\mid t\in T\}$ to distinguish
it from an induced character.

Write $\Bl(X)$ for the set of $p$-blocks of $X$ and $\bl(\chi)$ for
the block containing an irreducible character $\chi$.
For a $p$-block $b$, let $\Irr_0(b)$ denote the set of its height-zero
irreducible characters,
and set $\Irr_0(X)=\bigcup_{b\in\Bl(X)}\Irr_0(b)$.
For a $p$-subgroup $D\leq X$, set
\[
 \Irr_0(X\mid D)=
 \bigcup_{\substack{b\in\Bl(X)\\D\text{ a defect group of }b}}\Irr_0(b).
\]
Here $D$ denotes a specified subgroup. If $K\trianglelefteq X$ and
$\Theta\subseteq\Irr(K)$, set
\[
 \Irr(X\mid\Theta)=\bigcup_{\theta\in\Theta}\Irr(X\mid\theta),
 \qquad
 \Irr_0(X\mid D,\Theta)=\Irr_0(X\mid D)\cap\Irr(X\mid\Theta).
\]
If no defect group is specified, write
$\Irr_0(X\mid\Theta)=\Irr_0(X)\cap\Irr(X\mid\Theta)$.
We omit braces when the indexing set is a singleton. Fibers over
conjugate characters of $K$ coincide, so the distinct fibers are indexed
by $X$-orbits and are pairwise disjoint. If $Z\leq Z(X)$ and
$\lambda\in\Irr(Z)$, the central-character fiber consists of the characters
$\chi$ with $\chi_Z=\chi(1)\lambda$.

We use Brauer's definition of block induction and write $c^J$ only when
the induced block is defined. For modular arguments, we fix a sufficiently
large splitting $p$-modular system $(K,\mathcal O,k)$, with
$k=\mathcal O/J(\mathcal O)$. Here $J(\mathcal O)$ is the Jacobson radical,
which is the unique maximal ideal of the discrete valuation ring
$\mathcal O$, and $a^*$ denotes the residue of $a\in\mathcal O$.
A $p$-subgroup $R\leq U$ is \emph{radical} if $R=O_p(N_U(R))$,
where $O_p(V)$ denotes the largest normal $p$-subgroup of $V$.
The coefficient field $K$ is distinct from any subgroup denoted by the
same letter in a local configuration. We also write
$m(X)=|X:Z(X)|$.

\subsection{Block relations}
We recall the conventions and block relation of~\cite{ChenCT}. A triple
$(G,N,\theta)_{\mathcal H}$ is an
$\mathcal H$-triple if $N\trianglelefteq G$, $\theta\in\Irr(N)$, and
$\{\theta^g\mid g\in G\}\subseteq\theta^{\mathcal H}$.
For a subgroup $T\leq G$, the mixed stabilizer of $\theta$ is
\[
 (T\times\mathcal H)_\theta
 =\{(t,\sigma)\in T\times\mathcal H\mid\theta^{t\sigma}=\theta\}.
\]
An associated projective representation $\mathcal P$ of $G_\theta$
restricts on $N$ to a representation affording $\theta$ and satisfies
\[
 \mathcal P(nx)=\mathcal P(n)\mathcal P(x),\qquad
 \mathcal P(xn)=\mathcal P(x)\mathcal P(n)
 \quad(n\in N,\ x\in G_\theta).
\]
Its factor set is defined by
$\mathcal P(x)\mathcal P(y)=\alpha(x,y)\mathcal P(xy)$.
For $a=(h,\sigma)$ in the mixed stabilizer of $\theta$, the normalized
comparison function $\mu_a$ is determined by
\[
 \mathcal P(hxh^{-1})^\sigma
   =\mu_a(x)T_a\mathcal P(x)T_a^{-1}.
\]
This function is constant on $N$-cosets and takes the value $1$ at the
identity; see~\cite[Lemma~1.4]{NSV20}. In each construction, all
representations are taken over a sufficiently large finite cyclotomic
field, and all factor sets take values in roots of unity.

\begin{definition}\label{red:block}
Let $N\trianglelefteq G$, let $H\leq G$, and set $M=N\cap H$.
Suppose that $\theta\in\Irr(N)$ and $\phi\in\Irr(M)$ satisfy
$\{\theta^g\mid g\in G\}\subseteq\theta^{\mathcal H}$ and
$\{\phi^h\mid h\in H\}\subseteq\phi^{\mathcal H}$. We write
\[
 (G,N,\theta)_{\mathcal H}\geq_b(H,M,\phi)_{\mathcal H}
\]
if the following conditions hold.
\begin{enumerate}
\item $G=NH$, $C_G(N)\leq H$, and
      $(H\times\mathcal H)_\theta=(H\times\mathcal H)_\phi$.
\item There are associated projective representations
      \[
       \mathcal P:G_\theta\longrightarrow\GL_{\theta(1)}
                  (\mathbb Q^{\mathrm{ab}}),\qquad
       \mathcal P':H_\phi\longrightarrow\GL_{\phi(1)}
                  (\mathbb Q^{\mathrm{ab}})
      \]
      whose factor sets take values in roots of unity and agree on
      $H_\phi$. For every $c\in C_G(N)$, the matrices $\mathcal P(c)$
      and $\mathcal P'(c)$ are scalar matrices with the same scalar.
\item For every $a\in(H\times\mathcal H)_\theta$, the comparison
      functions satisfy $\mu'_a=\mu_a|_{H_\phi}$.
\item The blocks $\bl(\theta)$ and $\bl(\phi)$ have a common defect
      group $D$ such that $N_N(D)\leq M$. For every
      $N\leq W\leq G_\theta$, the correspondence
      \[
       \tau_W:\Irr(W\mid\theta)\longrightarrow\Irr(W\cap H\mid\phi)
      \]
      afforded by $(\mathcal P,\mathcal P')$ satisfies
      \[
       \bl(\tau_W(\xi))^W=\bl(\xi)
       \qquad\bigl(\xi\in\Irr(W\mid\theta)\bigr).
      \]
      In particular, the indicated block induction is defined.
\end{enumerate}
\end{definition}

The correspondence $\tau_W$ in the definition is the tensor
correspondence. Explicitly, if $\mathcal Q$ is an irreducible projective representation
of $W/N$
whose inflated factor set cancels that of $\mathcal P|_W$, then
\[
 \tr(\mathcal Q\otimes\mathcal P|_W)
 \longmapsto
 \tr(\mathcal Q|_{W\cap H}\otimes\mathcal P'|_{W\cap H}).
\]
Here the quotient $W/N$ is identified with $(W\cap H)/M$, and the
notation on the right includes inflation of $\mathcal Q$ through this
identification.

The relation $\geq_c$ is defined by conditions \textup{(i)--(iii)} of
Definition~\ref{red:block}. For the relation $\geq_b$, the same pair of
projective representations must also afford every correspondence
required in \textup{(iv)}.

\begin{definition}\label{red:assertion}
For $X\trianglelefteq A$ and a $p$-subgroup $D\leq X$, let
$\mathsf S(A,X,D)$ denote the assertion that there is an
$N_A(D)\times\mathcal H$-equivariant bijection
\[
 \Omega_D^X:\Irr_0(X\mid D)\longrightarrow\Irr_0(N_X(D)\mid D)
\]
satisfying $\bl(\Omega_D^X(\chi))^X=\bl(\chi)$ and, for
$\chi'=\Omega_D^X(\chi)$,
\begin{equation}\label{red:eq:assertion}
 (A_{\chi^{\mathcal H}},X,\chi)_{\mathcal H}
 \geq_b
 \bigl((N_A(D))_{(\chi')^{\mathcal H}},N_X(D),\chi'\bigr)_{\mathcal H}.
\end{equation}
\end{definition}

\begin{proposition}\label{red:fixedpoints}
Suppose that $\mathsf S(G,G,D)$ holds. If $b\in\Bl(G)$ has defect
group $D$ and $b'\in\Bl(N_G(D))$ is its Brauer correspondent, then
there is an $\mathcal H_b$-equivariant bijection
$\Irr_0(b)\longrightarrow\Irr_0(b')$. In particular, for every
$\sigma\in\mathcal H_b$,
\[
 |\Irr_0(b)^\sigma|=|\Irr_0(b')^\sigma|,
\]
where the superscript $\sigma$ denotes the fixed-point set.
\end{proposition}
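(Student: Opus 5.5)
The plan is to restrict the bijection $\Omega_D^G$ supplied by $\mathsf S(G,G,D)$ to $\Irr_0(b)$ and check that the result is the required $\mathcal H_b$-equivariant bijection onto $\Irr_0(b')$. Take $A=X=G$ in Definition~\ref{red:assertion}. Then $\Omega=\Omega_D^G:\Irr_0(G\mid D)\to\Irr_0(N_G(D)\mid D)$ is $N_G(D)\times\mathcal H$-equivariant and satisfies $\bl(\Omega(\chi))^G=\bl(\chi)$. Only the block condition and the $\mathcal H$-equivariance are needed; the $\geq_b$ relation plays no role here.

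First, $\Omega$ maps $\Irr_0(b)$ onto $\Irr_0(b')$. If $\chi\in\Irr_0(b)$, then $\bl(\Omega(\chi))$ is a block of $N_G(D)$ with defect group $D$ whose induced block is $b$. By Brauer's first main theorem, block induction gives a bijection between blocks of $N_G(D)$ with defect group $D$ and blocks of $G$ with defect group $D$. Hence $\bl(\Omega(\chi))=b'$, so $\Omega(\Irr_0(b))\subseteq\Irr_0(b')$.

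For surjectivity, let $\psi\in\Irr_0(b')$. Then $\psi\in\Irr_0(N_G(D)\mid D)$, so $\psi=\Omega(\chi)$ for some $\chi\in\Irr_0(G\mid D)$. Now $\bl(\chi)=\bl(\psi)^G=(b')^G=b$, so $\chi\in\Irr_0(b)$. Injectivity is inherited from $\Omega$.

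Second, the restriction is $\mathcal H_b$-equivariant, because $\Omega$ commutes with every $\sigma\in\mathcal H$ (the elements $(1,\sigma)$ of $N_G(D)\times\mathcal H$). Moreover, if $\sigma\in\mathcal H_b$ then $\sigma$ permutes $\Irr_0(b)$, and $\Irr_0(b')$ is $\sigma$-stable, either by the equivariance just shown or because $\mathcal H_b=\mathcal H_{b'}$. So the restriction intertwines the $\mathcal H_b$-actions.

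The fixed-point equality follows immediately: an equivariant bijection carries $\sigma$-fixed points exactly onto $\sigma$-fixed points. The only substantive step is identifying $\bl(\Omega(\chi))$ with $b'$, and Brauer's first main theorem handles this cleanly since the defect group is specified as $D$. So I expect no real obstacle.
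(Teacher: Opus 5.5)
Your proof is correct and follows the same route as the paper: you restrict $\Omega_D^G$ using the block-induction condition together with the uniqueness in Brauer's first main theorem, then use $\mathcal H$-equivariance to identify the $\sigma$-fixed-point sets. The only cosmetic differences are that you check surjectivity explicitly, and you observe that $\sigma\in\mathcal H_b$ stabilizing $b'$ already follows from the equivariance, whereas the paper cites a separate lemma on Galois stability of Brauer correspondents for this.
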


\begin{proof}
The block condition and the uniqueness assertion in Brauer's first
main theorem imply that $\Omega_D^G$ restricts to the stated bijection.
By \cite[Lemma~1.4]{ChenCT}, every $\sigma\in\mathcal H_b$ also
stabilizes $b'$. Equivariance therefore gives a bijection between the
corresponding $\sigma$-fixed-point sets, proving the equality.
\end{proof}

\subsection{The inductive condition}\label{r6:sub:1}

We impose the inductive condition on the universal covering group of
each non-abelian simple group. The block conditions and the
compatibility conditions for the Galois action must be satisfied by a
single pair of associated projective representations.

\begin{definition}[The iAMN condition]\label{r6:1}
Let $S$ be a non-abelian simple group, and let $U$ be its universal
covering group. We say that $S$ satisfies the \emph{iAMN condition} for
$p$ if the following requirements hold for every non-central radical
$p$-subgroup $R$ of $U$. Set $\Gamma=\Aut(U)_R$. There exist a
$\Gamma$-stable subgroup
\[
 N_U(R)\leq L_R<U
\]
and a $\Gamma\times\mathcal H$-equivariant bijection
\[
 \omega_R:\Irr_0(U\mid R)\longrightarrow\Irr_0(L_R\mid R)
\]
that preserves central characters on $Z(U)$. Moreover, $\omega_R$ maps
the height-zero characters in each block of $U$ represented in its domain
onto those in the block of $L_R$ with the same Brauer correspondent in $N_U(R)$.

For $\vartheta\in\Irr_0(U\mid R)$, set
\[
 \varphi=\omega_R(\vartheta),\qquad
 W=Z(U)\cap\ker\vartheta=Z(U)\cap\ker\varphi,\qquad V=U/W.
\]
Let $\bar\vartheta$ and $\bar\varphi$ be the corresponding characters
of $V$ and $L_R/W$, respectively. We require
\begin{equation}\label{r6:eq:1}
 \bigl(V\rtimes\Gamma_{\vartheta^{\mathcal H}},V,
       \bar\vartheta\bigr)_{\mathcal H}
 \geq_b
 \bigl((L_R/W)\rtimes\Gamma_{\vartheta^{\mathcal H}},L_R/W,
       \bar\varphi\bigr)_{\mathcal H}.
\end{equation}
A single pair of associated projective representations must afford this
relation, satisfying the central scalar condition, equality of all mixed
comparison functions, and the block condition on every intermediate
subgroup.

When $R$ is a central radical $p$-subgroup, we set $L_R=U$ and
$\omega_R=\mathrm{id}$ and use identical projective representations on
the two sides.
\end{definition}

The subgroup $W$ is invariant under $\Gamma_{\vartheta^{\mathcal H}}$,
so the semidirect products in \eqref{r6:eq:1} are defined. Pointwise
invariance of $W$ is not required. Passing to the quotient by the kernel
of the central character follows the ordinary inductive Alperin--McKay
framework of \cite[Definition~2.1(c)]{CS14}. In the present setting,
the mixed Galois comparison functions must also agree for the pair
affording the block relation. Separate pairs affording an ordinary
block relation and a Galois relation do not suffice to establish
\eqref{r6:eq:1}.

\subsection{A criterion for the inductive relation}\label{r6:sub:2}

The following criterion characterizes the required relation on the
central-character quotient. Every condition refers to the same pair of
projective representations. The common defect group and the normalizer
inclusion follow from the local data in Definition~\ref{r6:1}.

\begin{lemma}\label{r6:2}
Fix a non-central radical $p$-subgroup $R$ of $U$. Suppose that $L_R$
and $\omega_R$ satisfy the subgroup, equivariance, central-character,
and blockwise requirements preceding \eqref{r6:eq:1} in
Definition~\ref{r6:1}. For $\vartheta\in\Irr_0(U\mid R)$, set
\[
 \varphi=\omega_R(\vartheta),\qquad
 W=Z(U)\cap\ker\vartheta=Z(U)\cap\ker\varphi,
\]
and write
\[
 \begin{gathered}
 V=U/W,\qquad L=L_R/W,\qquad \bar R=RW/W,\\
 \Gamma_0=\Gamma_{\vartheta^{\mathcal H}},\qquad
 G=V\rtimes\Gamma_0,\qquad H=L\rtimes\Gamma_0.
 \end{gathered}
\]
Let $\bar\vartheta\in\Irr(V)$ and $\bar\varphi\in\Irr(L)$ be the
deflated characters. For a pair $(\mathcal P,\mathcal P')$ of associated
projective representations of $G_{\bar\vartheta}$ and
$H_{\bar\varphi}$, the following are equivalent.
\begin{enumerate}[label=\textup{(\roman*)}]
\item The pair affords \eqref{r6:eq:1}.
\item The pair affords
\[
 (G,V,\bar\vartheta)_{\mathcal H}
 \geq_c(H,L,\bar\varphi)_{\mathcal H},
\]
the specified defect group $\bar R$ satisfies
$C_{G_{\bar\vartheta}}(\bar R)\leq H_{\bar\varphi}$, and, for every
$V\leq J\leq G_{\bar\vartheta}$, the correspondence afforded by this
pair,
\[
 \tau_J:\Irr(J\mid\bar\vartheta)
       \longrightarrow\Irr(J\cap H_{\bar\varphi}\mid\bar\varphi),
\]
satisfies
$\bl(\tau_J(\xi))^J=\bl(\xi)$ for every
$\xi\in\Irr(J\mid\bar\vartheta)$, with the indicated block induction
defined.
\end{enumerate}
In either case, $\bar R$ is a common defect group of
$\bl(\bar\vartheta)$ and $\bl(\bar\varphi)$, and
$N_{G_{\bar\vartheta}}(\bar R)\leq H_{\bar\varphi}$.
\end{lemma}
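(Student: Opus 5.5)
The plan is to unpack Definition~\ref{red:block} for the pair $(G,V,\bar\vartheta)$, $(H,L,\bar\varphi)$ and compare it clause by clause with (ii). Conditions (i)--(iii) of Definition~\ref{red:block} are exactly the relation $\geq_c$, and the block condition in (iv) is literally the condition on the $\tau_J$ in (ii). It therefore suffices to show three things. First, $G=VH$ holds in both directions. Second, the two remaining requirements of (iv), a common defect group $D$ with $N_V(D)\le L$ and $C_G(V)\le H$, follow from the local data. Third, the extra hypothesis $C_{G_{\bar\vartheta}}(\bar R)\le H_{\bar\varphi}$ in (ii) follows from (i).

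I would first record the local facts that hold independently of the pair. Since $G=V\rtimes\Gamma_0$ and $H=L\rtimes\Gamma_0$, we have $G=VH$ and $V\cap H=L$. The subgroup $L_R$ contains $N_U(R)$, and $\omega_R$ matches $\bl(\vartheta)$ with a block of $L_R$ having the same Brauer correspondent in $N_U(R)$; hence $R$ is a defect group of both. Passing to the quotient by the central $W$ (a $p'$-part and a $p$-part inside $R$, via the standard block theory for central quotients, e.g.\ \cite[Theorem~9.9]{Nav04}-type results), $\bar R$ is a common defect group of $\bl(\bar\vartheta)$ and $\bl(\bar\varphi)$. Moreover $N_V(\bar R)$ is the image of $N_U(RW)$, and $N_U(RW)=N_U(R)$ up to the central $p'$-part of $W$. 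So $N_V(\bar R)\le L$, which gives the defect-group clause of (iv).

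For the centralizer clause I would argue that $C_G(V)\le Z(V)\Gamma_0$-type elements. Indeed, an element $v\gamma$ centralizing $V$ forces $\gamma$ to act on $V$ as the inner automorphism induced by $v^{-1}$. Since $\Gamma_0\le\Aut(U)_R$ stabilizes $R$, the element $v$ normalizes $\bar R$, so $v\in N_V(\bar R)\le L$ and $v\gamma\in H$. The same argument extended by Frattini-type reasoning gives the final claim. Take $x=v\gamma\in N_{G_{\bar\vartheta}}(\bar R)$. Then $\gamma$ normalizes $\bar R$, so $v\in N_V(\bar R)\le L$. Thus $x\in H$, and since $(H\times\mathcal H)_{\bar\vartheta}=(H\times\mathcal H)_{\bar\varphi}$ from $\geq_c$, we get $x\in H_{\bar\varphi}$. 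This gives $N_{G_{\bar\vartheta}}(\bar R)\le H_{\bar\varphi}$, and a fortiori $C_{G_{\bar\vartheta}}(\bar R)\le H_{\bar\varphi}$, so (i)$\Rightarrow$(ii) holds.

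For (ii)$\Rightarrow$(i) the same local facts supply the defect and centralizer clauses. It remains to check that the intermediate groups match: Definition~\ref{red:block}(iv) ranges over $V\le W'\le G_{\bar\vartheta}$ with $\tau_{W'}$ landing in $W'\cap H$, whereas (ii) uses $J\cap H_{\bar\varphi}$. These agree because $G_{\bar\vartheta}\cap H=H_{\bar\vartheta}=H_{\bar\varphi}$. The main obstacle I expect is the defect-group bookkeeping through the quotient by $W$. When $W$ has nontrivial $p$-part, I need $N_U(R)$ to map onto $N_V(\bar R)$ and Brauer correspondents to descend. I would handle this by noting that $R\supseteq O_p(Z(U))$ because $R$ is radical, so $W_p\le R$ and normalizers correspond exactly.
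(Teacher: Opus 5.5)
Your proposal is correct, and its skeleton agrees with the paper's proof. You first extract from the blockwise property and $W_p\le R$ that $\bar R$ is a common defect group with $N_V(\bar R)=N_U(R)/W\le L$. You then note that (ii) and Definition~\ref{red:block} differ only in this local data and in the target groups $J\cap H$ versus $J\cap H_{\bar\varphi}$, which coincide because $G_{\bar\vartheta}\cap H=H_{\bar\vartheta}=H_{\bar\varphi}$.

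The one genuinely different step is the normalizer inclusion $N_{G_{\bar\vartheta}}(\bar R)\le H_{\bar\varphi}$. The paper proves it by a Frattini argument:
\begin{itemize}
\item equality of mixed stabilizers gives $G_{\bar\vartheta}=VH_{\bar\varphi}$;
\item conjugacy in $L$ of the defect groups of $\bl(\bar\varphi)$ gives $H_{\bar\varphi}=LN_{H_{\bar\varphi}}(\bar R)$;
\item an element of $N_{G_{\bar\vartheta}}(\bar R)$ is then split accordingly.
\end{itemize}
You instead use that every $\gamma\in\Gamma_0\le\Aut(U)_R$ stabilizes $R$ and $W$, hence normalizes $\bar R$ in $G=V\rtimes\Gamma_0$. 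This gives $N_G(\bar R)=N_V(\bar R)\Gamma_0\le L\Gamma_0=H$, and intersecting with $G_{\bar\vartheta}$ and using $H_{\bar\vartheta}=H_{\bar\varphi}$ finishes.

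Your route is shorter, yields a character-independent inclusion on all of $G$, and needs no block-theoretic Frattini step. The paper's route uses only the $\geq_c$ factorization and the defect-group data. It would therefore also apply to an overgroup whose complement is not known in advance to normalize $\bar R$.

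Two minor corrections:
\begin{itemize}
\item $C_G(V)\le H$ is clause (i) of Definition~\ref{red:block}, so it is already part of $\geq_c$. Your separate argument for it is valid but redundant.
\item $N_U(RW)=N_U(R)$ holds exactly, not just up to $W_{p'}$, because $R$ is the unique Sylow $p$-subgroup of $RW$.
\end{itemize}
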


\begin{proof}
We first establish the assertions about the specified defect group,
without assuming either condition on the projective representations.
The blockwise property of $\omega_R$ implies that $R$ is a common defect
group of $\bl(\vartheta)$ and $\bl(\varphi)$. Since $W\leq Z(U)$ and every
block defect group contains $Z(U)_p$, we have $W_p\leq R$. Hence
\[
 RW=RW_{p'},
\]
and $R$ is the unique Sylow $p$-subgroup of $RW$. Quotient first by the
central $p$-subgroup $W_p$ and then by the image of the central
$p'$-subgroup $W_{p'}$. Both characters have $W$ in their kernels. The
central block correspondences therefore show that $\bar R$ is a common defect group
of $\bl(\bar\vartheta)$ and $\bl(\bar\varphi)$. An element normalizes
$RW$ if and only if it normalizes the unique Sylow $p$-subgroup $R$ of $RW$.
Consequently,
\begin{equation}\label{r6:eq:2}
 N_V(\bar R)=N_U(RW)/W=N_U(R)/W\leq L.
\end{equation}

Assume \textup{(ii)}. The $\geq_c$-relation supplies the group
factorization, equality of mixed stabilizers, common factor sets and
central scalars, and equality of mixed comparison functions. The
preceding paragraph supplies the common defect group and the required
normalizer inclusion. For $J\leq G_{\bar\vartheta}$, equality of
mixed stabilizers gives $J\cap H=J\cap H_{\bar\varphi}$. Thus the
block conditions in \textup{(ii)} are exactly the remaining conditions
of \cite[Definition~1.1]{ChenCT} for the same pair. This proves
\textup{(i)}.

Conversely, assume \textup{(i)}. The $\geq_c$-relation and the block
conditions on every intermediate subgroup are included in
\cite[Definition~1.1]{ChenCT}. It remains to establish the
centralizer inclusion for the specified defect group $\bar R$. Equality
of mixed stabilizers gives
\[
 G_{\bar\vartheta}=V H_{\bar\varphi}.
\]
Since $H_{\bar\varphi}$ stabilizes $\bl(\bar\varphi)$ and the defect
groups of this block are conjugate in $L$, we have
\[
 H_{\bar\varphi}=L N_{H_{\bar\varphi}}(\bar R).
\]
As $L\leq V$, we obtain
$G_{\bar\vartheta}=V N_{H_{\bar\varphi}}(\bar R)$. If
$x\in N_{G_{\bar\vartheta}}(\bar R)$, write $x=vh$ with $v\in V$
and $h\in N_{H_{\bar\varphi}}(\bar R)$. Then
$v\in N_V(\bar R)\leq L$ by \eqref{r6:eq:2}. Therefore
\[
 N_{G_{\bar\vartheta}}(\bar R)
   =N_V(\bar R)N_{H_{\bar\varphi}}(\bar R)
   \leq H_{\bar\varphi}.
\]
In particular, $C_{G_{\bar\vartheta}}(\bar R)\leq H_{\bar\varphi}$,
as required in \textup{(ii)}. This Frattini argument also applies under
\textup{(ii)}: it uses only the $\geq_c$-relation and the local
properties established above. The final assertion follows as well.
\end{proof}

This criterion uses the subgroup and blockwise hypotheses of
Definition~\ref{r6:1}. The equivalence is not asserted for
arbitrary $\mathcal H$-triples without the common-defect and normalizer
hypotheses. When the height-zero sets are empty, no character pairs
remain for which the relation must be checked.

\section{Transfer results and the DGN correspondence}\label{sec:inputs}
\subsection{Character-triple transfer}
We recall the Clifford and gluing theorems from~\cite{ChenCT} used
in the reduction. The block relations in these results are understood
in the sense of Definition~\ref{red:block}. Thus a single pair of
associated projective representations affords both the mixed comparison
functions and the block correspondences for all intermediate groups.

The homogeneous transfer theorem, \cite[Theorem~2.7]{ChenCT}, transfers
a relation between $(G,N,\theta)_{\mathcal H}$ and
$(H,M,\phi)_{\mathcal H}$ to a normal intermediate subgroup contained
in $G_\theta$. The full Clifford transfer theorem,
\cite[Theorem~2.19]{ChenCT}, applies to an arbitrary normal intermediate
subgroup $N\leq J\trianglelefteq G$ and yields both the induced block
relation and equality of relative heights. Its specialization to
height-zero characters with a specified defect group is stated in
\cite[Corollary~2.20]{ChenCT}.

We also use two gluing results. The normal-subgroup transfer theorem,
\cite[Theorem~3.2]{ChenCT}, lifts a strong correspondence on a normal
subgroup to a larger normal subgroup while preserving the specified
upper defect group. The inertia-group gluing theorem,
\cite[Theorem~3.3]{ChenCT}, assembles compatible correspondences on
the inertia groups of characters of a normal subgroup into a
correspondence on the whole group. Its proof uses
\cite[Lemma~3.1]{ChenCT}, which does not require equality between
the indices of the subgroups from which the characters are induced.

The semilinear centralization argument supplies the correspondences
needed for inertia-group gluing. The DGN and component correspondences
provide the input to normal-subgroup transfer. At each application, we
specify the relevant groups and verify the hypotheses.

\subsection{The central-defect configuration}\label{r3:sub:1}
Let $K\leq M$ be normal subgroups of a finite group $A$, with $M/K$ a
$p$-group. Let $b\in\Bl(K)$ have defect group $Z\leq Z(M)$, and choose an
$M$-invariant character $\theta\in\Irr(b)$ such that
\[
 \{\theta^a\mid a\in A\}\subseteq\theta^{\mathcal H}.
\]
Let $B$ be the unique block of $M$ covering $b$, and let $D$ be a defect
group of $B$. Set
\[
 H=N_A(D),\qquad L=N_K(D),\qquad M'=N_M(D).
\]
Write $\phi\in\Irr(L)$ for the generalized DGN correspondent of $\theta$.
The subgroup $Z$ need not lie in $\ker\theta$ or be central in $A$.
The construction therefore keeps track of the central-character fibers.

\subsection{The central-defect DGN theorem}\label{r3:sub:2}
\begin{theorem}[{\cite[Theorem~A]{ChDGN}}]\label{r3:1}
In the preceding configuration, there is an
$H\times\mathcal H$-equivariant bijection
\[
 \Delta_D:\Irr_0(M\mid\theta^{\mathcal H})
       \longrightarrow\Irr_0(M'\mid\phi^{\mathcal H}).
\]
For each $\gamma\in\mathcal H$, the bijection maps the fiber over
$\theta^\gamma$ onto the fiber over $\phi^\gamma$. It matches
Brauer corresponding blocks. If $\eta=\Delta_D(\xi)$, then
\[
 (A_{\xi^{\mathcal H}},M,\xi)_{\mathcal H}
 \geq_b(H_{\eta^{\mathcal H}},M',\eta)_{\mathcal H}.
\]
A single pair of associated projective representations affords the common
factor sets, equal central scalars, mixed comparison functions and block
correspondences for all intermediate groups.
\end{theorem}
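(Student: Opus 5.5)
The statement is quoted from \cite[Theorem~A]{ChDGN}, so in the present paper it is an input rather than something to be reproved. Still, the proof I would give in that setting runs as follows. It has three stages: build $\Delta_D$ on $\theta$-fibers, extend it to the $\mathcal H$-orbit, and verify the block relation through the criterion of Definition~\ref{red:block}.

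\emph{Step 1: setting up the correspondence on $K$.} Since $b$ has central defect group $Z\leq Z(M)$ and $M/K$ is a $p$-group, I would first use the structure of $B$. It is the unique block covering $b$, and $D\cap K=Z$ with $DK/K$ a Sylow-type subgroup of $M/K$. The generalized DGN correspondence $\theta\mapsto\phi\in\Irr(L)$ is to be constructed by working over the central character $\lambda$ of $\theta$ on $Z$. Passing to a twisted group algebra of $K/Z$ and $L/Z$ turns $b$ into a defect-zero situation with a $p$-group $D/Z$ acting. There the classical Glauberman/DGN machinery applies, realized via Ladisch's magic representations \cite{Lad11}. The key point is that the construction is canonical: it commutes with $N_A(D)$-conjugation and with $\mathcal H$. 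Canonicity yields $(H\times\mathcal H)_\theta=(H\times\mathcal H)_\phi$, and it sends $\theta^\gamma$ to $\phi^\gamma$.

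\emph{Step 2: projective representations and the $\geq_c$ part.} From the magic representation I would extract an integral bimodule, i.e.\ a multiplicity module over $\mathcal O$. That module intertwines a representation affording $\theta$ with one affording $\phi$, compatibly with $H_\theta$. This lets me choose associated projective representations $\mathcal P$ of $A_\theta$ and $\mathcal P'$ of $H_\phi$ with equal factor sets on $H_\phi$ and equal scalars on $C_A(K)$. For the mixed comparison functions, I would apply $\sigma\in\mathcal H$ to the bimodule and use Fu's Galois-equivariance \cite{Fu26} to show that $\mu_a$ and $\mu'_a$ coincide. This gives $(A_{\theta^{\mathcal H}},K,\theta)_{\mathcal H}\geq_c(H_{\phi^{\mathcal H}},L,\phi)_{\mathcal H}$.

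\emph{Step 3: block condition, then Clifford transfer.} For every intermediate $K\leq W\leq A_\theta$, I would verify $\bl(\tau_W(\xi))^W=\bl(\xi)$ using the trace criterion \cite[Theorem~4.4]{NS14}. This reduces to a congruence of central characters modulo $J(\mathcal O)$ on $p$-regular class sums. That congruence follows because the multiplicity module is a $p$-permutation-type module whose Brauer quotient at $D$ is nonzero.

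With $\geq_b$ established at the level of $K$, I would apply the full Clifford transfer \cite[Theorem~2.19]{ChenCT} and \cite[Corollary~2.20]{ChenCT} to the normal subgroup $M$. This produces $\Delta_D$ on $\Irr_0(M\mid\theta^{\mathcal H})$, the fiber matching, and preservation of height zero and of Brauer correspondents. It also gives the relation $(A_{\xi^{\mathcal H}},M,\xi)_{\mathcal H}\geq_b(H_{\eta^{\mathcal H}},M',\eta)_{\mathcal H}$, with $H\times\mathcal H$-equivariance inherited from canonicity.

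The main obstacle is Step 2. The difficulty is to produce one pair $(\mathcal P,\mathcal P')$ that simultaneously controls the factor sets and the Galois comparison functions, even though $Z$ is neither in $\ker\theta$ nor central in $A$. Since the central character moves under $A$ and $\mathcal H$, I would first handle the mixed stabilizer of $\lambda$, and only afterwards glue over orbits of $\lambda$.
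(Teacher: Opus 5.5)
\paragraph{The gap: Steps~2--3.}
You aim to prove $(A_{\theta^{\mathcal H}},K,\theta)_{\mathcal H}\geq_c(H_{\phi^{\mathcal H}},L,\phi)_{\mathcal H}$, then $\geq_b$ at the level of $K$, and to feed this into \cite[Theorem~2.19]{ChenCT}. In the central-defect configuration that relation is false in general, so the transfer theorem has no valid input.
\begin{itemize}
\item Condition~(iv) of Definition~\ref{red:block} fails. The unique defect group of $\bl(\theta)$ is $Z\leq Z(M)$. So (iv) would force $N_K(Z)=K\leq L=N_K(D)$, that is, $D\trianglelefteq M$.
\item Condition~(i) fails as well, because $C_A(K)$ need not normalize $D$. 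Then $\mathcal P'$ is not even defined on $C_A(K)$, and your ``equal scalars on $C_A(K)$'' has no meaning.
\end{itemize}
A small example shows both failures. Take $p=2$, $A=M=S_3$, $K=A_3$, $\theta=1_K$ (so $Z=1$), $D=\langle(1\,2)\rangle=H$, $L=1$ and $\phi=1_L$. Then $C_A(K)=K\not\leq H$. The intermediate block condition you propose to check also fails at $W=K$. Indeed $\bl(\phi)^K$ is undefined, since Brauer's map would send the class sum of a $3$-cycle $x$ to $0$ although $x^3=1$. For the same reasons, \cite[Theorem~4.4]{NS14} cannot be applied at level $K$.

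\paragraph{What is needed instead.}
Steps~1--2 can legitimately deliver only raw data: a pair $\mathcal P$ on $A_\theta$ and $\mathcal P'$ on $H_\phi=H_\theta$ with a common factor set on $H_\phi$ and equal mixed comparison functions. The block relation must be established directly at the level of $M$, which is why the theorem is stated for $M$ and $M'=N_M(D)$. Every $\xi\in\Irr_0(M\mid\theta)$ satisfies $\xi_K=\theta$, by the degree count in the proof of Corollary~\ref{r3:2}. After tensoring with matching projective representations of $M/K\cong M'/L$, three things remain to be checked.
\begin{itemize}
\item The centralizer condition. This is now automatic, since $C_A(M)\leq C_A(D)\leq H$.
\item Equality of the central scalars on $C_A(M)$. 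This must be arranged in the construction; it is not inherited from $K$.
\item The block identities for all $M\leq W\leq A_\xi$, with common defect group $D$ and $N_M(D)=M'$. Via the trace criterion, this needs a DGN-type congruence between $\tr\mathcal P(x)$ and $\tr\mathcal P'(x)$ at $p'$-elements $x$ with $D\in\Syl_p(C_M(x))$.
\end{itemize}
That congruence is the substantive content of \cite[Theorem~6.3]{ChDGN}, with the comparison functions handled in \cite[Proposition~5.3]{ChDGN}. A nonzero Brauer quotient at $D$ does not by itself give this equality of reductions. The cited proof also treats all central-character fibers at once, through a graded construction over $\mathcal O Z$, rather than gluing over orbits of the central character afterwards.
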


The theorem is the central-defect form of the correspondence for
ordinary characters proved in the related paper~\cite{ChDGN}. Its proof
constructs a graded correspondence over the group algebra of $Z$ and
identifies the associated projective representations on each
central-character fiber.
The comparison-function calculation is given in
\cite[Proposition~5.3]{ChDGN}, and the intermediate-group block
identities are proved in \cite[Theorem~6.3]{ChDGN}.
The integral construction and its proof are not repeated here.

When $Z=1$, the group identities reduce to
\[
 K\cap D=1,\qquad N_K(D)=C_K(D),\qquad N_M(D)=D\times C_K(D).
\]
The reduction requires the full central-defect statement, allowing
$K\cap D$ to be nontrivial.

\subsection{Correspondence over all character fibers}\label{r3:sub:3}
The following corollary, proved in \cite[Corollary~7.4]{ChDGN},
combines the correspondences over the relevant characters of $K$.
We recall its proof to explain its role in the induction.

\begin{corollary}\label{r3:2}
Let $K\leq V$ be normal subgroups of a finite group $B$, and let
$D\leq V$ be a $p$-subgroup. Suppose that
\[
 V=KD,\qquad K\cap D\leq Z(V).
\]
There is an $N_B(D)\times\mathcal H$-equivariant bijection
\[
 \Delta:\Irr_0(V\mid D)\longrightarrow\Irr_0(N_V(D)\mid D)
\]
which matches Brauer corresponding blocks. For every pair
$\xi'=\Delta(\xi)$, we have
\[
 (B_{\xi^{\mathcal H}},V,\xi)_{\mathcal H}
 \geq_b\bigl((N_B(D))_{(\xi')^{\mathcal H}},N_V(D),\xi'\bigr)_{\mathcal H}.
\]
\end{corollary}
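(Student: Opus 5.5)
The plan is to partition $\Irr_0(V\mid D)$ into fibers over Galois--conjugacy classes of characters of $K$, apply Theorem~\ref{r3:1} on each fiber, and glue. Set $Z=K\cap D$, which is central in $V$ and normal in $B$ because $K$ and $V$ are. Since $V=KD$, the quotient $V/K\cong D/Z$ is a $p$-group. Take $\chi\in\Irr_0(V\mid D)$ lying in a block $B_0$ with defect group $D$, and let $\theta\in\Irr(K)$ lie under $\chi$ in a block $b$ of $K$ covered by $B_0$. Because $V/K$ is a $p$-group, $B_0$ is the unique block of $V$ covering $b$. I would first show that $b$ has defect group $D\cap K=Z\le Z(V)$: the defect groups of $b$ have the form $D^v\cap K$, and $Z$ is normal in $V$. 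Then $b$ is a block with central defect group, hence nilpotent with a unique height-zero character on each central-character fiber. Together with the height-zero condition on $\chi$, this should force $\theta$ to be $V$-invariant. The same argument should give a converse: every $V$-invariant $\theta$ in such a block, covered by a block with defect group $D$, arises in this way.

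Next I take $A=B$ and $M=V$ and let $\theta$ range over representatives of the $N_B(D)\times\mathcal H$-orbits of such characters $\theta$ whose covering block has defect group $D$. Since $B$ normalizes $K$ and $V$, for each representative we have $\{\theta^a\mid a\in B_{\theta^{\mathcal H}}\}\subseteq\theta^{\mathcal H}$. So I apply Theorem~\ref{r3:1} with ambient group $B_{\theta^{\mathcal H}}$ (or with $B$, once the Frattini argument $B_{\theta^{\mathcal H}}=K\,N_{B_{\theta^{\mathcal H}}}(D)$ is in place). This gives the bijection $\Delta_D$ on $\Irr_0(V\mid\theta^{\mathcal H})$, which I then spread over the full $N_B(D)$-orbit by conjugation. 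Here $N_V(D)=N_{KD}(D)=N_K(D)D$, and $N_M(D)=M'$ is exactly $N_V(D)$. The images land in $\Irr_0(N_V(D)\mid D)$ because $\Delta_D$ matches Brauer correspondents, and the Brauer correspondent of a block with defect group $D$ has defect group $D$. Disjointness of fibers makes the resulting map $\Delta$ injective. Surjectivity follows from the analogous fiber decomposition of $\Irr_0(N_V(D)\mid D)$ over the generalized DGN correspondents $\phi$, which are the characters of $N_K(D)$ attached to the $\theta$'s; the DGN correspondence is bijective on $N_B(D)$-classes of such $\theta$.

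For equivariance and the block relation I rely on the conclusion of Theorem~\ref{r3:1}. The bijection is $H_{\theta^{\mathcal H}}\times\mathcal H$-equivariant, and I extend it to $N_B(D)\times\mathcal H$ by transporting along coset representatives. Well-definedness needs the DGN correspondence $\theta\mapsto\phi$ to be $N_B(D)\times\mathcal H$-equivariant. The relation
\[
 (B_{\xi^{\mathcal H}},V,\xi)_{\mathcal H}\geq_b\bigl((N_B(D))_{(\xi')^{\mathcal H}},N_V(D),\xi'\bigr)_{\mathcal H}
\]
is obtained from Theorem~\ref{r3:1} by checking that $B_{\xi^{\mathcal H}}\le B_{\theta^{\mathcal H}}$. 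This holds because $\theta$ is determined by $\xi$ up to $V$-conjugacy, and $\theta$ is $V$-invariant. Conjugating a relation by elements of $N_B(D)$ preserves $\geq_b$.

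I expect the main obstacle to be the first step: proving that $\theta$ is $V$-invariant and that fibers of height-zero characters correspond exactly. If invariance fails in general, the fallback is to pass to the inertia group $V_\theta$, apply Theorem~\ref{r3:1} there with $M=V_\theta$, and then use the inertia-group gluing theorem \cite[Theorem~3.3]{ChenCT} with the Clifford correspondence to return to $V$.
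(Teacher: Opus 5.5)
Your plan is essentially the paper's proof: show that a constituent $\theta$ of $\xi_K$ is $V$-invariant, apply Theorem~\ref{r3:1} with ambient group $B_{\theta^{\mathcal H}}$, transport the fiber bijections along $N_B(D)\times\mathcal H$-orbits using equivariance under the mixed stabilizer of $\theta$, and conclude from $B_{\xi^{\mathcal H}}\leq B_{\theta^{\mathcal H}}$. The step you flag is settled in the paper by a degree count rather than by central-character uniqueness, which by itself would still require proving that $\bl(\theta)$ is $V$-stable: since $\bl(\theta)$ has the central defect group $Z=K\cap D$, the character $\theta$ has height zero, so $\theta(1)_p=|K:Z|_p=|V:D|_p=\xi(1)_p$, and because $V/K$ is a $p$-group the inertia index and the ramification index are both $1$, giving $\xi_K=\theta$ and making your inertia-group fallback unnecessary.
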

\begin{proof}
Set $Z=K\cap D$ and take $\xi\in\Irr_0(V\mid D)$.
By~\cite[Proposition~2.5]{NS14}, we may choose a constituent
$\theta\in\Irr_0(K\mid Z)$ of $\xi_K$ such that the Clifford
correspondent of $\xi$ over $\theta$ has defect group $D$. Then
\[
 \theta(1)_p=|K:Z|_p=|V:D|_p=\xi(1)_p.
\]
Since $V/K$ is a $p$-group, the inertia index and the multiplicity
in the Clifford decomposition are powers of $p$. The displayed
equality forces both to be one, so $\xi_K=\theta$.
Apply Theorem~\ref{r3:1} in $B_{\theta^{\mathcal H}}$.
The generalized DGN correspondence matches the characters occurring
on the two normal subgroups. We transport the resulting fiber
correspondences along their $N_B(D)\times\mathcal H$-orbits. This
is well-defined because each fiber correspondence is equivariant
under the full mixed stabilizer of the character indexing the fiber.
Finally, $\xi_K=\theta$ implies
$B_{\xi^{\mathcal H}}\leq B_{\theta^{\mathcal H}}$. Thus the strong
relation is defined on the full ambient group required for $\xi$.
\end{proof}

In Section~\ref{sec:reduction}, we apply this corollary to a normal
section $V=KD$ and then use \cite[Theorem~3.2]{ChenCT} to transfer
the correspondence from $V$ to the required larger normal subgroup.

\section{Semilinear centralization and descent}\label{sec:centralization}

This section establishes the centralization step in the induction on the
central index. We formulate the induction hypothesis for arbitrary finite
overgroups, since the construction replaces the given overgroup by a
finite semidirect product.

\subsection{The induction hypothesis}\label{r4:sub:1}

Fix the prime $p$ and the group
$\mathcal H\leq\operatorname{Gal}(\mathbb Q^{\mathrm{ab}}/\mathbb Q)$
used throughout the paper. We use the facts that $\mathcal H$ is abelian and that
its action preserves roots of unity, character heights, and the specified
defect subgroups. We use the action convention
\[
 \chi^{(h,\sigma)}(x)=\sigma\bigl(\chi(hxh^{-1})\bigr),
 \qquad (\chi^a)^b=\chi^{ab}.
\]
Write $\Irr_0(V\mid Q)$ for the height-zero characters in blocks
with defect group $Q$, and set $m(V)=|V:Z(V)|$. For $K\unlhd V$
and $S\subseteq\Irr(K)$, define
\[
 \Irr_0(V\mid Q,S)=\Irr_0(V\mid Q)\cap
       \bigcup_{\eta\in S}\Irr(V\mid\eta).
\]
We omit the braces when $S$ is a singleton. As in Section~1, all
representations in a given construction are taken over a sufficiently
large finite cyclotomic field, with root-of-unity-valued factor sets.
Throughout this section, $\geq_b$ denotes the full relation of \cite[Definition~1.1]{ChenCT}.
In particular, the same pair of associated projective representations
must afford both the mixed comparison functions and the block
correspondences for all intermediate groups.

\begin{hypothesis}[Induction on the central index]\label{r4:1}
Fix a positive integer $n$. For every finite group $V$ with $m(V)<n$,
every finite group $B$ containing $V$ as a normal subgroup, and every
$p$-subgroup $Q\leq V$, there is an $N_B(Q)\times\mathcal H$-equivariant
bijection
\[
 \Omega_Q:\Irr_0(V\mid Q)\longrightarrow
              \Irr_0(N_V(Q)\mid Q)
\]
such that
\[
 \bl\bigl(\Omega_Q(\rho)\bigr)^V=\bl(\rho).
\]
If $\rho'=\Omega_Q(\rho)$, then
\[
 (B_{\rho^{\mathcal H}},V,\rho)_{\mathcal H}
 \geq_b
 \bigl(N_B(Q)_{(\rho')^{\mathcal H}},N_V(Q),\rho'\bigr)_{\mathcal H}.
\]
In particular, a single pair of associated projective representations
affords the mixed comparison functions and the block correspondences
for all intermediate groups.
\end{hypothesis}

The groups $V$ may be restricted to a class closed under the subgroup,
quotient, and finite central-extension constructions used below. The
hypothesis must still hold for every finite overgroup $B$ in which $V$
is normal.

\subsection{A finite semilinear extension}\label{r4:sub:2}

Let $K\unlhd A$ and $\zeta\in\Irr(K)$, and set
\[
 B=A_\zeta,\qquad A_0=A_{\zeta^{\mathcal H}},\qquad
 \Gamma=(A_0\times\mathcal H)_\zeta.
\]
The group $\Gamma$ need not be finite. Since the group and Galois
actions commute, $B\unlhd A_0$: if $h\in A_0$ and
$\zeta^h=\zeta^\tau$, then
$B^h=A_{\zeta^h}=A_{\zeta^\tau}=B$.

Choose a projective representation
\[
 \mathcal P:B\longrightarrow\operatorname{GL}_d(\mathbb Q^{\mathrm{ab}}),
 \qquad d=\zeta(1),
\]
associated with $(B,K,\zeta)$, whose normalized factor set $\alpha$
takes values in roots of unity and is inflated from $B/K$. Thus
\begin{equation}\label{r4:eq:1}
 \mathcal P(g)\mathcal P(t)=\alpha(g,t)\mathcal P(gt),
 \qquad
 \alpha(k,g)=\alpha(g,k)=1
 \quad(k\in K,\ g\in B).
\end{equation}
The restriction $\mathcal P_K$ is an ordinary irreducible representation
affording $\zeta$. The existence of such a projective representation
follows from \cite{NSV20}. For each $a=(h,\sigma)\in\Gamma$, the same
results give a unique normalized comparison function
$\mu_a:B\longrightarrow(\mathbb Q^{\mathrm{ab}})^\times$, constant on
$K$-cosets and equal to $1$ on $K$, and an invertible matrix $S_a$ such
that
\begin{equation}\label{r4:eq:2}
 \mathcal P(hgh^{-1})^\sigma
   =\mu_a(g)S_a^{-1}\mathcal P(g)S_a
 \qquad(g\in B).
\end{equation}
The construction below uses the comparison functions $\mu_a$ and
requires no multiplicative compatibility among the matrices $S_a$.

\begin{lemma}\label{r4:2}
There exists a finite Galois-stable group $Z$ of roots of unity
containing all values of $\alpha$ and of every comparison function
$\mu_a$, $a\in\Gamma$. For such a group $Z$, define multiplication
on $\widehat B=B\times Z$ and a map $\pi$ by
\begin{equation}\label{r4:eq:3}
 (g,z)(t,w)=(gt,zw\alpha(g,t)),
 \qquad
 \pi:\widehat B\longrightarrow B,\quad(g,z)\longmapsto g.
\end{equation}
Then $\widehat B$ is a finite group with central kernel
$\ker\pi\cong Z$, and
\[
 K_0=\{(k,1):k\in K\}\unlhd\widehat B.
\]
The map $L(g,z)=z\mathcal P(g)$ is an ordinary irreducible
representation of $\widehat B$. Its character
$\widetilde\zeta=\operatorname{tr}L$ extends the character of $K_0$
corresponding to $\zeta$.

For $a=(h,\sigma)\in\Gamma$, define
\begin{equation}\label{r4:eq:4}
 f_a(g,z)=\left(hgh^{-1},\bigl(z\mu_a(g)^{-1}\bigr)^{\sigma^{-1}}\right).
\end{equation}
Then $f_a\in\operatorname{Aut}(\widehat B)$, and
\begin{align}
 f_{ab}&=f_a\circ f_b,\label{r4:eq:5}\\
 f_a(k,1)&=(hkh^{-1},1),\label{r4:eq:6}\\
 f_a(1,z)&=(1,z^{\sigma^{-1}}),\label{r4:eq:7}\\
 L(f_a(g,z))^\sigma&=S_a^{-1}L(g,z)S_a.\label{r4:eq:8}
\end{align}
Thus $a\mapsto f_a$ defines a left action of $\Gamma$ on
$\widehat B$ by automorphisms.
\end{lemma}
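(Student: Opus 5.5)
The plan is to verify the statements one at a time, using only the cocycle identity for $\alpha$, the defining relation \eqref{r4:eq:2}, and the uniqueness of the normalized comparison function.

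\emph{The finite group $Z$.} Fix a finite cyclotomic field $F$ over which $\mathcal P$ is realized. Then all values of $\alpha$ are roots of unity in $F$, and $\mu(F)$ is finite. For $a=(h,\sigma)\in\Gamma$, take determinants in \eqref{r4:eq:2}:
\[
 \mu_a(g)^d=\det\mathcal P(hgh^{-1})^\sigma/\det\mathcal P(g).
\]
We may take $\sigma$ to preserve $F$, because it acts through $\Gal(F/\mathbb Q)$. Then $\mu_a(g)^d\in F^\times$. Since it is a ratio of values whose powers are products of values of $\alpha$, it is a root of unity of $F$. Hence every $\mu_a(g)$ lies in the finite group of roots of unity of order dividing $d\,|\mu(F)|$. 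This group contains the values of $\alpha$ and is Galois-stable, so we take it as $Z$. The point is that although $\Gamma$ is infinite, the possible values of the $\mu_a$ are bounded.

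\emph{The group $\widehat B$ and the representation $L$.}
\begin{itemize}
\item Associativity of \eqref{r4:eq:3} is exactly the $2$-cocycle identity for $\alpha$, which follows from associativity of matrix multiplication in $\mathcal P$.
\item Normalization of $\alpha$ makes $(1,1)$ the identity, and inverses exist because $\widehat B$ is finite.
\item $\ker\pi=\{1\}\times Z$ is central, since $\alpha(1,g)=\alpha(g,1)=1$.
\item By \eqref{r4:eq:1}, $(k,1)(t,w)=(kt,w)$, so $K_0$ is a subgroup isomorphic to $K$. It equals $\pi^{-1}(K)\cap$ (a complement), and a direct computation $(g,z)(k,1)(g,z)^{-1}\in K_0$ using $\alpha(g,k)=\alpha(gk g^{-1},g)=1$ gives normality.
\item $L$ is a homomorphism because
\[
 z\mathcal P(g)\,w\mathcal P(t)=zw\,\alpha(g,t)\mathcal P(gt).
\]
\item $L$ is irreducible because $L|_{K_0}=\mathcal P_K$ already affords the irreducible character $\zeta$.
\end{itemize}

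\emph{The automorphisms $f_a$.} First derive the transformation law of $\alpha$ under $a$. Expand $\mathcal P(hgh^{-1})^\sigma\mathcal P(hth^{-1})^\sigma$ in two ways: once through the factor set, and once through \eqref{r4:eq:2}. Comparing the scalars (Schur's lemma) gives
\[
 \alpha(hgh^{-1},hth^{-1})^\sigma=\alpha(g,t)\,\mu_a(g)\mu_a(t)\mu_a(gt)^{-1}.
\]
Applying $\sigma^{-1}$ to this identity is exactly what is needed to check that $f_a(g,z)f_a(t,w)=f_a((g,z)(t,w))$. The map $f_a$ is injective on the finite group $\widehat B$, hence an automorphism. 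Formulas \eqref{r4:eq:6} and \eqref{r4:eq:7} follow because $\mu_a$ is $1$ on $K$ and at the identity. For \eqref{r4:eq:8}, compute
\[
 L(f_a(g,z))^\sigma=z\mu_a(g)^{-1}\mathcal P(hgh^{-1})^\sigma=zS_a^{-1}\mathcal P(g)S_a.
\]

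\emph{The composition law \eqref{r4:eq:5}.} I expect this to be the main obstacle, because it requires a cocycle identity for the comparison functions, and the matrices $S_a$ are not assumed compatible. Write $a=(h,\sigma)$ and $b=(h',\tau)$, so that $ab=(hh',\sigma\tau)$, using that $\mathcal H$ is abelian and commutes with conjugation. Expanding $f_a\circ f_b$ shows that \eqref{r4:eq:5} is equivalent to
\[
 \mu_{ab}(g)=\mu_b(g)\,\mu_a(h'gh'^{-1})^{\tau}.
\]
To prove this, apply \eqref{r4:eq:2} for $a$ at the element $h'gh'^{-1}$, raise the result to $\tau$, and then substitute \eqref{r4:eq:2} for $b$. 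This gives
\[
 \mathcal P(hh'g(hh')^{-1})^{\sigma\tau}=\mu_a(h'gh'^{-1})^\tau\mu_b(g)\,T^{-1}\mathcal P(g)T,
 \qquad T=S_bS_a^\tau.
\]
The scalar function on the right is constant on $K$-cosets and equals $1$ on $K$. The uniqueness of the normalized comparison function \cite[Lemma~1.4]{NSV20} then identifies it with $\mu_{ab}$; any scalar ambiguity is absorbed into $T$, which explains why no compatibility of the $S_a$ is needed. Thus $a\mapsto f_a$ is a homomorphism $\Gamma\to\Aut(\widehat B)$, i.e.\ a left action.
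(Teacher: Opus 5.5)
Your proof is correct and follows essentially the same route as the paper: you bound the values of $\mu_a$ by taking determinants, you use the transformation law for $\alpha$ under $a$ to show that $f_a$ is a homomorphism, and you obtain the identity $\mu_{ab}(g)=\mu_b(g)\,\mu_a(h'gh'^{-1})^{\tau}$ from the intertwiner $S_bS_a^{\tau}$ together with uniqueness of the normalized comparison function. The only minor differences are that you get bijectivity of $f_a$ directly from injectivity on the finite group $\widehat B$, whereas the paper deduces it from $f_1=1$ and the composition law, and that you bound the orders using the roots of unity of a fixed cyclotomic field rather than the orders of the determinants.
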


\begin{proof}
Comparing products in \eqref{r4:eq:2} gives
\begin{equation}\label{r4:eq:9}
 \alpha(hgh^{-1},hth^{-1})^\sigma
   =\mu_a(g)\mu_a(t)\mu_a(gt)^{-1}\alpha(g,t).
\end{equation}
Let $a=(h,\sigma)$ and $b=(t,\tau)$ belong to $\Gamma$. We have
$(\mathcal P^a)^b=\mathcal P^{ab}$. Applying $b$ to
\eqref{r4:eq:2} gives the intertwiner $S_bS_a^\tau$ and the scalar
function $\mu_a^b\mu_b$. After restriction to $K$, this intertwiner
and $S_{ab}$ intertwine the same pair of irreducible representations.
They therefore differ by a scalar, by Schur's lemma. Substitution in
\eqref{r4:eq:2} for arbitrary elements of $B$ now gives
\begin{equation}\label{r4:eq:10}
 \mu_{ab}(g)=\tau\bigl(\mu_a(tgt^{-1})\bigr)\mu_b(g).
\end{equation}
Thus no additional scalar $2$-cocycle occurs in this identity.

For $g\in B$, equation~\eqref{r4:eq:1} shows that
$\mathcal P(g)^{|g|}$ is a scalar matrix with a root of unity as its
scalar. Hence $\det\mathcal P(g)$ is a root of unity. Since $B$ is
finite, choose a positive integer $N$ divisible by the orders of all
these determinants, and a positive integer $m_\alpha$ divisible by the
orders of all values of $\alpha$.
Taking determinants in \eqref{r4:eq:2} gives
\[
 \mu_a(g)^d=
 \frac{\det\mathcal P(hgh^{-1})^\sigma}{\det\mathcal P(g)}
 \in\mu_N.
\]
It follows that $\mu_a(g)\in\mu_{dN}$ for every $a\in\Gamma$ and
$g\in B$, where $\mu_r$ denotes the group of all $r$th roots of unity
in $\mathbb Q^{\mathrm{ab}}$. We may therefore take
\begin{equation}\label{r4:eq:11}
 m=\operatorname{lcm}(m_\alpha,dN),\qquad Z=\mu_m.
\end{equation}
The group $Z$ is stable under all Galois automorphisms. The bound on
the orders of the values of $\mu_a$ is uniform in $a$; no finiteness
assumption on $\Gamma$ or on the collection of comparison functions
is needed.

The normalized cocycle identity shows that \eqref{r4:eq:3} defines a
group with identity $(1,1)$ and central subgroup
$\{(1,z):z\in Z\}$. Since $\alpha$ is inflated from $B/K$, we have
$K_0\cong K$ and
\[
 (g,z)(k,1)(g,z)^{-1}=(gkg^{-1},1).
\]
Thus $K_0$ is normal. Moreover,
\[
 L(g,z)L(t,w)=zw\alpha(g,t)\mathcal P(gt)
            =L\bigl((g,z)(t,w)\bigr).
\]
Thus $L$ is an ordinary representation. Since its restriction to
$K_0$ is irreducible, $L$ is irreducible and affords the required
extension.

By \eqref{r4:eq:9}, the second coordinate of $f_a(g,z)f_a(t,w)$ is
\begin{align*}
 &\bigl(zw\mu_a(g)^{-1}\mu_a(t)^{-1}\bigr)^{\sigma^{-1}}
          \alpha(hgh^{-1},hth^{-1})\\
 &\hspace{35mm}=
       \bigl(zw\alpha(g,t)\mu_a(gt)^{-1}\bigr)^{\sigma^{-1}}.
\end{align*}
The first coordinate is $hgth^{-1}$, so $f_a$ is a homomorphism.
For the composition law, the first coordinate of $f_a(f_b(g,z))$ is
$htg(ht)^{-1}$, and the second coordinate is
\begin{align*}
 \left[\bigl(z\mu_b(g)^{-1}\bigr)^{\tau^{-1}}
                       \mu_a(tgt^{-1})^{-1}\right]^{\sigma^{-1}}
 &=\left[z\mu_b(g)^{-1}
               \tau\bigl(\mu_a(tgt^{-1})\bigr)^{-1}\right]^{(\sigma\tau)^{-1}}\\
 &=\bigl(z\mu_{ab}(g)^{-1}\bigr)^{(\sigma\tau)^{-1}}.
\end{align*}
Here we used \eqref{r4:eq:10} and the commutativity of the Galois group.
This proves \eqref{r4:eq:5}. Uniqueness of the normalized comparison
function gives $\mu_1=1$. Consequently, $f_1=1$ and $f_{a^{-1}}$ is
the inverse of $f_a$, proving that each $f_a$ is an automorphism.

Equations~\eqref{r4:eq:6} and~\eqref{r4:eq:7} follow from
$\mu_a|_K=1$. Finally,
\[
 L(f_a(g,z))^\sigma
   =z\mu_a(g)^{-1}\mathcal P(hgh^{-1})^\sigma
   =S_a^{-1}\bigl(z\mathcal P(g)\bigr)S_a,
\]
which proves \eqref{r4:eq:8}.
\end{proof}

\begin{remark}\label{r4:3}
For $h\in B$, the automorphism $f_{(h,1)}$ is conjugation by
$(h,1)\in\widehat B$. Indeed, projective multiplication gives
\[
 \mu_{(h,1)}(g)^{-1}
   =\frac{\alpha(h,g)\alpha(hg,h^{-1})}{\alpha(h,h^{-1})},
\]
which is the factor multiplying $z$ in
$(h,1)(g,z)(h,1)^{-1}$. Thus the mixed action restricts to ordinary
conjugation by lifts of elements of $B$.
\end{remark}

\begin{proposition}\label{r4:4}
Suppose that $K\leq X\unlhd A$ and $X\leq B$. Set
\[
 \widehat X=\pi^{-1}(X),\qquad
 \overline B=\widehat B/K_0,\qquad
 V=\widehat X/K_0,\qquad
 F=\operatorname{im}\bigl(\Gamma\longrightarrow
                       \operatorname{Aut}(\widehat B)\bigr).
\]
Then $F$ is finite and stabilizes both $K_0$ and $\widehat X$. It
therefore induces an action on $\overline B$. In particular,
\[
 \mathcal A=\overline B\rtimes F
\]
is a finite overgroup containing $\overline B$, and $V\unlhd\mathcal A$.
If $\overline Z$ is the image of $Z$ in $\overline B$, then
\[
 \overline Z\cong Z,\qquad
 \overline Z\leq Z(V),\qquad
 V/\overline Z\cong X/K,\qquad
 |V:Z(V)|\leq|X:K|.
\]
\end{proposition}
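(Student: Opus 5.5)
The proof is direct bookkeeping with the explicit formulas of Lemma~\ref{r4:2}. We first show that $F$ is finite. Each $f_a$ is a permutation of the finite set $\widehat B=B\times Z$, so $F$ lies in the finite group $\operatorname{Aut}(\widehat B)$. By \eqref{r4:eq:5}, $a\mapsto f_a$ is a homomorphism from $\Gamma$, so $F$ is a finite subgroup even though $\Gamma$ may be infinite.

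Next we check the stability claims. By \eqref{r4:eq:6}, $f_a(K_0)=\{(hkh^{-1},1)\}=K_0$, because $h\in A_0$ normalizes $K\unlhd A$. By \eqref{r4:eq:4}, $\pi(f_a(g,z))=hgh^{-1}$. Since $X\unlhd A$ and $h\in A$, we get $f_a(\widehat X)=\pi^{-1}(hXh^{-1})=\widehat X$. The map $\pi$ is a homomorphism by \eqref{r4:eq:3}, so $\widehat X$ is a subgroup of $\widehat B$. It is normal in $\widehat B$ because $X\unlhd B$; indeed $X\unlhd A$ and $X\leq B$. Hence $F$ acts on $\overline B=\widehat B/K_0$ and preserves $V=\widehat X/K_0$, and we may form $\mathcal A=\overline B\rtimes F$. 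Then $V$ is normalized by $\overline B$ and by $F$, so $V\unlhd\mathcal A$.

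For the structure of $V$, note that $\ker\pi=\{(1,z)\}\cong Z$ is central in $\widehat B$. Its intersection with $K_0$ is trivial, since an element $(1,z)\in K_0$ must have $z=1$. Thus $\overline Z\cong Z$, and $\overline Z$ is central in $\overline B\supseteq V$. Moreover $\ker\pi\leq\widehat X$, so $\overline Z\leq Z(V)$. The map $\pi$ induces a surjection $\widehat X\to X\to X/K$ whose kernel is $\pi^{-1}(K)=K_0\ker\pi$; here we use that every $(k,z)$ equals $(k,1)(1,z)$, because $\alpha(k,1)=1$. Passing to the quotient by $K_0$ gives $V/\overline Z\cong X/K$. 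Finally, $\overline Z\leq Z(V)$ gives
\[
|V:Z(V)|\leq|V:\overline Z|=|X:K|.
\]

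We expect no real obstacle in this argument. The only points needing care are two. First, the finiteness of $F$ must come from the finiteness of $\operatorname{Aut}(\widehat B)$, not from any finiteness of $\Gamma$. Second, the action of $F$ on $\overline B$ is semilinear on $Z$ by \eqref{r4:eq:7}, but this does not matter here, since it still stabilizes $\overline Z$ setwise.
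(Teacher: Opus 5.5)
Your proof is correct and follows essentially the same route as the paper. Finiteness of $F$ comes from finiteness of $\widehat B$, and stability of $K_0$ and $\widehat X$ comes from \eqref{r4:eq:6} together with the fact that $f_a$ covers conjugation by $h\in A$. The assertions about $\overline Z$ come from $Z\cap K_0=1$ and the centrality of $Z$; you simply make explicit details the paper leaves implicit, such as $\pi^{-1}(K)=K_0\ker\pi$ via the normalization of $\alpha$.
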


\begin{proof}
Since $\widehat B$ is finite, so is $F$. Lemma~\ref{r4:2} shows that
$F$ stabilizes $K_0$. Each $f_a$ covers conjugation by an element of
$A$ and hence stabilizes $\widehat X$, since $X\unlhd A$. Hence $F$ acts on
$\overline B$. The subgroup $V$ is normal in $\overline B$ and
$F$-stable, so $V\unlhd\mathcal A$. Finally, $Z\cap K_0=1$ and
$Z\leq Z(\widehat B)$ yield the assertions about $\overline Z$,
the quotient isomorphism, and the central-index bound.
\end{proof}

\begin{proposition}\label{r4:5}
With the notation of Proposition~\ref{r4:4}, let
$q:\widehat X\longrightarrow V$ be the quotient map. For every
$\chi\in\Irr(X\mid\zeta)$, its inflation
$\widehat\chi=\chi\circ\pi|_{\widehat X}$ can be written uniquely as
\begin{equation}\label{r4:eq:12}
 \widehat\chi=\widetilde\zeta_{\widehat X}
                \operatorname{Inf}^{\widehat X}_V(\rho),
 \qquad \rho\in\Irr(V).
\end{equation}
If $a=(h,\sigma)\in\Gamma$ and $\overline f_a$ is its induced
automorphism on $V$, then
\begin{equation}\label{r4:eq:13}
 \chi^a=\chi\quad\Longleftrightarrow\quad
       \rho^{(\overline f_a,\sigma)}=\rho,
 \qquad
 \rho^{(\overline f_a,\sigma)}(v)
       =\sigma\bigl(\rho(\overline f_a(v))\bigr).
\end{equation}
\end{proposition}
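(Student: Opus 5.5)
The plan is to prove \eqref{r4:eq:12} with Gallagher's theorem on the finite group $\widehat X$, and then compare both sides under the mixed action using \eqref{r4:eq:6}--\eqref{r4:eq:8}.

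\emph{Step 1: the decomposition.} Since $X\leq B=A_\zeta$, the character $\zeta$ is $X$-invariant. Hence $\widetilde\zeta_{\widehat X}$ is an irreducible extension of the character $\zeta_0$ of $K_0$ corresponding to $\zeta$; its restriction to $K_0$ is irreducible because $L|_{K_0}$ is. Inflation gives $\widehat\chi\in\Irr(\widehat X)$, and $\widehat\chi$ lies over $\zeta_0$ because $\pi$ restricts to an isomorphism $K_0\to K$ carrying $\zeta_0$ to $\zeta$. Gallagher's theorem therefore gives a unique $\rho\in\Irr(\widehat X/K_0)=\Irr(V)$ with $\widehat\chi=\widetilde\zeta_{\widehat X}\operatorname{Inf}(\rho)$. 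This is \eqref{r4:eq:12}.

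\emph{Step 2: transport of the factors under $a=(h,\sigma)$.} Define the mixed action on class functions of $\widehat X$ by $\psi^{(f_a,\sigma)}(y)=\sigma(\psi(f_a(y)))$. This preserves products and inflation from $V$, because $f_a$ stabilizes $K_0$ and $\widehat X$.
\begin{itemize}
\item Taking traces in \eqref{r4:eq:8} gives $\widetilde\zeta^{(f_a,\sigma)}=\widetilde\zeta$.
\item Since $\pi\circ f_a=c_h\circ\pi$ with $c_h(g)=hgh^{-1}$, we get $\widehat\chi^{(f_a,\sigma)}=\widehat{\chi^a}$, using the convention $\chi^{h\sigma}(x)=\sigma(\chi(hxh^{-1}))$.
\end{itemize}
Applying the action to \eqref{r4:eq:12} therefore gives
\[
 \widehat{\chi^a}=\widetilde\zeta_{\widehat X}\operatorname{Inf}\bigl(\rho^{(\overline f_a,\sigma)}\bigr).
\]

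\emph{Step 3: conclusion.} The character $\chi^a$ again lies in $\Irr(X\mid\zeta)$, because $\zeta^a=\zeta$ as $a\in\Gamma$. Thus $\widehat{\chi^a}$ has its own decomposition, with Gallagher correspondent $\rho^{(\overline f_a,\sigma)}$ by Step 2. Since $\pi$ is surjective, inflation is injective, and Gallagher correspondence is bijective. It follows that $\chi^a=\chi$ if and only if $\rho^{(\overline f_a,\sigma)}=\rho$, which is \eqref{r4:eq:13}.

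The main point needing care is the direction conventions in Step 2. One must check that $f_a$ combined with $\sigma$ really fixes $\widetilde\zeta$, rather than producing $\sigma^{-1}$ or a twist by $\mu_a$. Equation \eqref{r4:eq:8} was designed for exactly this, since the $\mu_a^{-1}$ built into $f_a$ cancels the scalar in \eqref{r4:eq:2}. One must also check that the central $Z$-coordinate does not disturb $\widehat\chi$: it does not, since $\widehat\chi$ is inflated through $\pi$, which ignores $Z$.
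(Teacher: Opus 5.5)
Your proposal is correct and follows essentially the same route as the paper. Both arguments use Gallagher's theorem for the decomposition, the trace of \eqref{r4:eq:8} for mixed invariance of $\widetilde\zeta_{\widehat X}$, the identities $\pi\circ f_a=c_h\circ\pi$ and $q\circ f_a=\overline f_a\circ q$ to identify the twisted product with the inflation of $\chi^a$, and injectivity of inflation and of the Gallagher correspondence. The paper adds a remark on the central fiber, $\rho_{\overline Z}=\rho(1)\nu^{-1}$, which is not needed for the stated equivalence.
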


\begin{proof}
The character $\widetilde\zeta_{\widehat X}$ extends the character on
$K_0$ corresponding to $\zeta$. Gallagher's theorem therefore gives
\eqref{r4:eq:12} and its uniqueness. Equation~\eqref{r4:eq:8} gives
\[
 \widetilde\zeta_{\widehat X}^{(f_a,\sigma)}
       =\widetilde\zeta_{\widehat X}.
\]
Since $q\circ f_a=\overline f_a\circ q$, it follows that
\begin{equation}\label{r4:eq:14}
 \bigl(\widetilde\zeta_{\widehat X}\operatorname{Inf}(\rho)\bigr)^{(f_a,\sigma)}
   =\widetilde\zeta_{\widehat X}
                \operatorname{Inf}\bigl(\rho^{(\overline f_a,\sigma)}\bigr).
\end{equation}
Since $f_a$ covers $g\mapsto hgh^{-1}$, the left-hand side of
\eqref{r4:eq:14}, with $\rho$ as in \eqref{r4:eq:12}, is the inflation
of $\chi^a$. The injectivity of the Gallagher correspondence proves
\eqref{r4:eq:13}.

The corresponding central character is preserved as well. Since
$Z\leq\ker\widehat\chi$ and $L(1,z)=zI_d$, we have
\[
 \rho_{\overline Z}=\rho(1)\nu^{-1},\qquad \nu(z)=z,
\]
where $\overline Z$ is identified with $Z$. This character is fixed by
the corresponding mixed action, because
$\sigma((z^{\sigma^{-1}})^{-1})=z^{-1}$. Hence the stabilizer
correspondence preserves the central fiber defined by the possibly
nontrivial character $\nu^{-1}$.
\end{proof}

\begin{remark}\label{r4:6}
The automorphism $f_a$ need not determine the Galois component
$\sigma$: distinct pairs $a=(h,\sigma)$ can induce the same
automorphism. We therefore retain the pair $(f_a,\sigma)$ for each
$a$. Conjugation by $(1,f_a)$ in
$\mathcal A=\overline B\rtimes F$ induces $\overline f_a$ on $V$.
Thus Hypothesis~\ref{r4:1} applies to this overgroup action together
with the Galois action. Equation~\eqref{r4:eq:13} identifies the
corresponding stabilizer conditions; it does not assert that every
Gallagher factor is fixed by all of $\Gamma$. The construction also
requires no ordinary extension of $\zeta$ to $A_0$.
\end{remark}

\subsection{The central index bound}\label{r4:sub:3}

For the remainder of this section, assume Hypothesis~\ref{r4:1}. Fix
normal subgroups $K\leq X$ of a finite group $A$, a $p$-subgroup
$D\leq X$, and
\[
 D_0=K\cap D,\qquad \zeta\in\Irr_0(K\mid D_0),\qquad
 \zeta\text{ is $X$-invariant},\qquad |X:K|<n.
\]
Set
\[
 A_0=A_{\zeta^{\mathcal H}},\qquad C=N_{A_0}(D),\qquad
 Y=KN_X(D),\qquad E=KC.
\]
We first construct a quotient group to which the induction hypothesis
applies. At this stage we work over the fixed character $\zeta$; its full
$\mathcal H$-orbit is treated in Theorem~\ref{r4:16}. If both
$\Irr_0(X\mid D,\zeta)$ and $\Irr_0(Y\mid D,\zeta)$ are empty, the
empty bijection suffices. We may therefore assume that at least one of
these sets is nonempty. The construction below will also establish the
nonemptiness of the other.

Consider the central extension constructed in Lemma~\ref{r4:2},
\[
 \pi:\widehat B\longrightarrow B=A_\zeta,\qquad
 \ker\pi=Z,\qquad K_0\unlhd\widehat B,
\]
together with its ordinary representation $L$ and extension character
$\widetilde\zeta=\operatorname{tr}L$. Write
\[
 \widehat X=\pi^{-1}(X),\qquad \widehat Y=\pi^{-1}(Y).
\]
The group $\pi^{-1}(D)$ is a central cyclic extension of a $p$-group.
It has a unique Sylow $p$-subgroup $\widehat D$, and
\begin{equation}\label{r4:eq:15}
 \pi^{-1}(D)=\widehat D\times Z_{p'},\qquad
 |\widehat D|=|Z|_p|D|.
\end{equation}
Indeed, $Z_{p'}$ is a central normal Hall $p'$-subgroup.
The Schur--Zassenhaus theorem gives a complement, and the conjugacy of
complements, together with the centrality of $Z_{p'}$, gives uniqueness. Moreover,
\[
 \pi|_{\widehat D\cap K_0}:\widehat D\cap K_0
           \xrightarrow{\ \sim\ }D_0.
\]
Let $\zeta_0$ correspond to $\zeta$ under $K_0\cong K$, and set
$D_{00}=\widehat D\cap K_0$. This subgroup is a defect group of the block containing
$\zeta_0$.

Set $U=K_0\widehat D$ and $b_0=\bl(\zeta_0)$. The nonemptiness assumption for the characters over
$\zeta$ yields a block $b$ of
$\widehat X$ or $\widehat Y$ covering $b_0$ with defect group
$\widehat D$. Indeed, inflate a character from either nonempty set along
$\pi$. The defect formula for a central quotient gives
\eqref{r4:eq:15}; since inflation leaves the degree unchanged, the
inflated character has height zero.

The block $b_0$ is fixed by $\widehat B$. Since $U/K_0$ is a
$p$-group, there is a unique block of $U$ covering $b_0$, and its
idempotent is $b_0$, viewed as an element of $kU$. This covering block contains
$\widetilde\zeta_U$, whose restriction to $K_0$ is $\zeta_0$. By the
defect-intersection theorem and the defect projection formula for an
extension, a defect group $R$ of this block satisfies
\[
 |R\cap K_0|=|D_{00}|,\qquad RK_0/K_0=U/K_0.
\]
Thus $|R|=|\widehat D|$. Since $bb_0=b$ and
$\operatorname{Br}_{\widehat D}(b)\ne0$, we have
\[
 \operatorname{Br}_{\widehat D}(b_0)\ne0.
\]
Thus $\widehat D$ is contained in a defect group of the unique
covering block. Comparing orders, we conclude that $\widehat D$ is itself
a defect group of that block.

Every $g\in N_{\widehat B}(U)$ fixes $b_0$, so the defect groups
$\widehat D^g$ and $\widehat D$ are conjugate in $U$. The block
Frattini argument therefore gives
\begin{equation}\label{r4:eq:16}
 N_{\widehat B}(U)=K_0N_{\widehat B}(\widehat D).
\end{equation}
By \eqref{r4:eq:15} and the characteristic property of the unique
Sylow subgroup,
\[
 N_{\widehat B}(\widehat D)=\pi^{-1}(N_B(D)).
\]
Intersecting with $\widehat X$ gives
\begin{equation}\label{r4:eq:17}
 N_{\widehat X}(U)=K_0N_{\widehat X}(\widehat D)
                  =\pi^{-1}(KN_X(D))=\widehat Y.
\end{equation}
Also $E\cap B=KN_B(D)$, and hence
\begin{equation}\label{r4:eq:18}
 \widehat E:=N_{\widehat B}(U)=\pi^{-1}(E\cap B).
\end{equation}
The nonemptiness assumption is essential to this block argument.
In particular, equations~\eqref{r4:eq:16}--\eqref{r4:eq:18} are not being
deduced from the defect condition on $D_0$ alone.

Now set
\[
 \overline B=\widehat B/K_0,\qquad V=\widehat X/K_0,\qquad
 Q=U/K_0,\qquad\overline Z=K_0Z/K_0.
\]
Since $\overline Z\leq Z(V)$ and $V/\overline Z\cong X/K$, we have
\begin{equation}\label{r4:eq:19}
 m(V)\leq|V:\overline Z|=|X:K|<n.
\end{equation}
Let
$F=\operatorname{im}(\Gamma\longrightarrow\operatorname{Aut}(\widehat B))$
and use the finite overgroup
\begin{equation}\label{r4:eq:20}
 \mathcal A=\overline B\rtimes F,\qquad V\unlhd\mathcal A.
\end{equation}
Thus the quotient overgroup $\overline B$ embeds in
$\mathcal A$. By \eqref{r4:eq:17}, we have
$\widehat Y/K_0=N_V(Q)$. Hypothesis~\ref{r4:1} now yields a
bijection
\begin{equation}\label{r4:eq:21}
 \Omega_Q:\Irr_0(V\mid Q)\longrightarrow\Irr_0(N_V(Q)\mid Q).
\end{equation}

\subsection{Height-zero characters and prescribed defects}\label{r4:sub:4}

We next establish the multiplication result needed for descent.
Inflation need not preserve height zero, so quotient characters must be
distinguished from their inflations. We prove both directions of the required
assertion using \cite[Lemma~2.2, Proposition~2.5, and Theorem~4.4]{NS14}.
The argument below keeps the height-zero hypotheses on quotient
characters separate from the hypothesis on inflated characters in
\cite[Theorem~4.6]{NS14}.

Work over a sufficiently large splitting $p$-modular system. For
$\chi\in\Irr(A)$, let $\lambda_\chi$ denote the central form of its block on
$Z(kA)$. Thus
\[
 \lambda_\chi\bigl(\operatorname{Cl}_A(x)^+\bigr)
 =\left(\frac{|\operatorname{Cl}_A(x)|\chi(x)}{\chi(1)}\right)^*.
\]
Here ${}^*$ denotes reduction of local integers, and ${}^+$ denotes the sum
of the elements of a subset in the group algebra. The displayed fraction is
an algebraic integer, and the form depends only on $\bl(\chi)$.

\begin{lemma}\label{r4:7}
Suppose that $D\leq M\leq N\leq A$, where $D$ is a $p$-group and
$N_N(D)\leq M$. If $x\in C_A(D)$, then
\[
 D\in\Syl_p(C_N(x))\quad\Longleftrightarrow\quad
 D\in\Syl_p(C_M(x)).
\]
If $K\unlhd A$ and $K\leq M$, the same assertion holds for $N/K$ and $M/K$
whenever the corresponding normalizer containment holds. The element $x$ is allowed to lie in the overgroup.
\end{lemma}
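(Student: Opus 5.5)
The argument is a normalizer-growth argument on $p$-subgroups. It uses only the hypotheses $D\leq M\leq N$, $N_N(D)\leq M$ and $x\in C_A(D)$. Neither $x$ nor the ambient group $A$ needs to lie in $N$; we only need that $C_N(x)=\{g\in N\mid gx=xg\}$ and $C_M(x)=C_N(x)\cap M$ are subgroups of $N$ and $M$. Since $x$ centralizes $D$ and $D\leq M$, we have
\[
 D\leq C_M(x)\leq C_N(x),
\]
so in both directions $D$ is a $p$-subgroup of the relevant centralizer.

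\emph{Forward direction.} If $D\in\Syl_p(C_N(x))$, then $D$ is a $p$-subgroup of the subgroup $C_M(x)$ whose order is the full $p$-part of $|C_N(x)|$. Since $|C_M(x)|_p\leq|C_N(x)|_p$, $D$ is a Sylow $p$-subgroup of $C_M(x)$.

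\emph{Converse direction.} Suppose $D\in\Syl_p(C_M(x))$ but $D\notin\Syl_p(C_N(x))$. Choose $P\in\Syl_p(C_N(x))$ with $D<P$. Since normalizers grow in $p$-groups, $D<N_P(D)$. Now
\[
 N_P(D)\leq N_N(D)\leq M
 \quad\text{and}\quad
 N_P(D)\leq P\leq C_N(x),
\]
so $N_P(D)\leq C_M(x)$. Thus $N_P(D)$ is a $p$-subgroup of $C_M(x)$ properly containing $D$, contradicting the Sylow property of $D$ in $C_M(x)$. This step, using the hypothesis $N_N(D)\leq M$, is the only point with content.

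\emph{Quotient version.} For $K\unlhd A$ with $K\leq M$, replace $A,N,M,D,x$ by $A/K$, $N/K$, $M/K$, $DK/K$, $xK$. The image $DK/K$ is a $p$-group contained in $M/K$, and $xK$ centralizes it because $x$ centralizes $D$. The assumed containment $N_{N/K}(DK/K)\leq M/K$ is exactly the normalizer hypothesis in $A/K$. Centralizers are taken in $A/K$, so that $xK$ may lie outside $N/K$. The argument above therefore applies verbatim in $A/K$. No comparison between Sylow subgroups of $C_N(x)$ and of $C_{N/K}(xK)$ is needed, since the statement concerns the quotient groups themselves. I expect no genuine obstacle here; the only care required is this bookkeeping of where the centralizers are taken.
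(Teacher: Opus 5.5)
Your proof is correct and follows the paper's argument essentially verbatim. The forward direction comes from $C_M(x)\le C_N(x)$. The converse comes from $D<N_P(D)\le N_N(D)\cap C_N(x)\le C_M(x)$ for some $P\in\Syl_p(C_N(x))$ properly containing $D$. The quotient case is handled by running the same argument in $A/K$, where it in fact applies to any element of $C_{A/K}(DK/K)$, not only to images of elements of $C_A(D)$.
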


\begin{proof}
The forward implication follows from the subgroup inclusion. Conversely,
suppose that $D$ is Sylow in $C_M(x)$
but not in $C_N(x)$, and choose $P\in\Syl_p(C_N(x))$ containing $D$.
Since $D<P$, the normalizer condition for finite $p$-groups gives
\[
 D<N_P(D)\leq N_N(D)\cap C_N(x)\leq C_M(x),
\]
a contradiction. The same argument in the quotient proves the assertion
for $N/K$ and $M/K$.
\end{proof}

\begin{lemma}\label{r4:8}
The following assertions hold.
\begin{enumerate}[label=\textup{(\roman*)}]
\item Let $K\unlhd A$, let $\zeta\in\Irr(A)$ with
$\zeta_K\in\Irr(K)$, and let $\nu$ be the inflation of
$\bar\nu\in\Irr(A/K)$. For $x\in A$, set $\bar x=xK$ and
$L/K=C_{A/K}(\bar x)$. Then
\begin{equation}\label{r4:eq:22}
 \lambda_{\zeta\nu}\bigl(\operatorname{Cl}_A(x)^+\bigr)
 =\lambda_{\zeta_L}\bigl(\operatorname{Cl}_L(x)^+\bigr)
  \lambda_{\bar\nu}\bigl(\operatorname{Cl}_{A/K}(\bar x)^+\bigr).
\end{equation}
\item Suppose that $D\leq M\leq N$ and $N_N(D)\leq M$. Let
$c\in\Bl(M)$ and $b\in\Bl(N)$ have the common defect group $D$, and
suppose that $c^N=b$. For every $p'$-element $x\in M$ with
$D\in\Syl_p(C_N(x))$, we have
\begin{equation}\label{r4:eq:23}
 \lambda_b\bigl(\operatorname{Cl}_N(x)^+\bigr)
 =\lambda_c\bigl(\operatorname{Cl}_M(x)^+\bigr).
\end{equation}
\end{enumerate}
\end{lemma}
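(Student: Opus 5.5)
Part (i) is a direct computation of central characters. We will use $\zeta(x)\nu(x)=\zeta(x)\bar\nu(\bar x)$ and split the class $\operatorname{Cl}_A(x)$ according to its image in $A/K$.

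The class $\operatorname{Cl}_A(x)$ maps onto $\operatorname{Cl}_{A/K}(\bar x)$. The fibre over $\bar x$ is $\operatorname{Cl}_A(x)\cap xK$, and this set is the $L$-class $\operatorname{Cl}_L(x)$: if $x^g\in xK$ then $gK\in C_{A/K}(\bar x)=L/K$, so $g\in L$. Hence
\[
 |\operatorname{Cl}_A(x)|=|\operatorname{Cl}_{A/K}(\bar x)|\,|\operatorname{Cl}_L(x)|.
\]
Since $(\zeta\nu)(1)=\zeta(1)\bar\nu(1)$ and $\zeta_L(1)=\zeta(1)$, the quantity $|\operatorname{Cl}_A(x)|(\zeta\nu)(x)/(\zeta\nu)(1)$ factors as
\[
 \frac{|\operatorname{Cl}_L(x)|\zeta_L(x)}{\zeta_L(1)}\cdot
 \frac{|\operatorname{Cl}_{A/K}(\bar x)|\bar\nu(\bar x)}{\bar\nu(1)}.
\]
Here $\zeta_L$ is irreducible, because its restriction to $K$ is irreducible. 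So both factors are algebraic integers, namely central character values of $\zeta_L$ and of $\bar\nu$. Reducing modulo the maximal ideal and using multiplicativity of ${}^*$ on local integers gives \eqref{r4:eq:22}.

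Part (ii) is Brauer's description of block induction via central characters: $\lambda_{c^N}=\lambda_c\circ\operatorname{Br}$, where the map sends $\operatorname{Cl}_N(x)^+$ to the sum of those $M$-classes contained in $\operatorname{Cl}_N(x)\cap M$. Note that $\lambda_b$ vanishes on the $p$-regular class sums whose defect groups are not contained in a conjugate of $D$. The plan has three steps.
\begin{enumerate}
\item Since $c^N=b$ is defined, $\lambda_b(\operatorname{Cl}_N(x)^+)=\lambda_c\bigl((\operatorname{Cl}_N(x)\cap M)^+\bigr)$.
\item Write $\operatorname{Cl}_N(x)\cap M$ as a disjoint union of $M$-classes $\operatorname{Cl}_M(y)$. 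One of them is $\operatorname{Cl}_M(x)$.
\item Show that $\lambda_c$ vanishes on every other $M$-class $\operatorname{Cl}_M(y)$ in the union.
\end{enumerate}

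Step (iii) is the main obstacle, and we will argue as follows. By Brauer's result on defect classes, $\lambda_c(\operatorname{Cl}_M(y)^+)\neq0$ forces a Sylow $p$-subgroup of $C_M(y)$ to be contained in an $M$-conjugate of $D$ (min-argument: $\lambda_c$ vanishes on classes of defect smaller than required). Also $|C_M(y)|_p\ge|D|$ whenever the value is nonzero, since the class must have defect at least that of the block. So after conjugating $y$ in $M$ we may assume $D\in\Syl_p(C_M(y))$, with $y=x^g$ for some $g\in N$. Then $D\in\Syl_p(C_N(y))$ by Lemma~\ref{r4:7}, since $y\in C_N(D)$ and $N_N(D)\le M$. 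Also $D\in\Syl_p(C_N(x))$, and $C_N(y)=C_N(x)^g$. By Sylow's theorem in $C_N(y)$ we can adjust $g$ by an element of $C_N(y)$ so that $D^g=D$. Then $g\in N_N(D)\le M$, so $y\in\operatorname{Cl}_M(x)$. Hence the only class in the union with nonzero $\lambda_c$ is $\operatorname{Cl}_M(x)$ itself, which yields \eqref{r4:eq:23}.

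The delicate point in step (iii) is the adjustment of $g$. It must be done carefully: we have $D^g\in\Syl_p(C_N(y))$, so there is $t\in C_N(y)$ with $D^{gt}=D$. Replacing $g$ by $gt$ keeps $x^{gt}=y$, and now $gt$ normalizes $D$.
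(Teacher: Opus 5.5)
\textbf{Gap in step (iii) of part (ii).} Your part (i) agrees with the paper's argument. You should add that $\zeta\nu$ is irreducible by Gallagher's theorem, so that $\lambda_{\zeta\nu}$ is defined.

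In part (ii), step (iii) uses the Min--Max theorem in the wrong direction. If $\lambda_c(\operatorname{Cl}_M(y)^+)\neq0$, the theorem says that an $M$-conjugate of $D$ is \emph{contained in} a Sylow $p$-subgroup of $C_M(y)$. It does not say the reverse. Containment of the class defect group in a conjugate of $D$ holds only for classes in the support of the block idempotent, not for classes on which the central character is nonzero. Your assertion is false in general. For example, take $y=1$: then $\lambda_c(1)=1$, but a Sylow $p$-subgroup of $M$ need not lie in a conjugate of $D$. Your preliminary ``Note'' about $\lambda_b$ contains the same reversal. As written, you therefore have no upper bound on $|C_M(y)|_p$, so the conclusion $D\in\Syl_p(C_M(y))$ is not justified. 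The order bound $|C_M(y)|_p\geq|D|$ alone does not help. Even with equality of orders, a Sylow $p$-subgroup of $C_M(y)$ would a priori be only $N$-conjugate to $D$, not $M$-conjugate.

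\textbf{How to repair it.} Since $y\in\operatorname{Cl}_N(x)$, you have $|C_M(y)|_p\leq|C_N(y)|_p=|C_N(x)|_p=|D|$. The correct direction of Min--Max gives $m\in M$ with $D^m\leq C_M(y)$. Together these give $D^m\in\Syl_p(C_M(y))$. After replacing $y$ by a suitable $M$-conjugate, your adjustment of $g$ by an element of $C_N(y)$ goes through unchanged, and only $\operatorname{Cl}_M(x)$ survives.

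\textbf{Comparison with the paper.} With this correction your route is valid, but it differs from the paper's. The paper passes to $N_N(D)=N_M(D)$ and observes that $b$ and $c$ have the same Brauer correspondent $c_0$ there. Using the same Sylow trick, it shows that $\operatorname{Cl}_N(x)\cap C_N(D)=\operatorname{Cl}_M(x)\cap C_N(D)$. The two class sums then have the same Brauer image at $D$, and evaluating through $\lambda_{c_0}$ gives the identity. Your version works directly from the defining identity for $c^N$ and needs no Brauer correspondent. The cost is the Min--Max theorem for $c$, used to eliminate the other $M$-classes in $\operatorname{Cl}_N(x)\cap M$.
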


\begin{proof}
For (i), Gallagher's theorem shows that $\zeta\nu$ is irreducible.
Moreover, $\zeta_L$ is irreducible because its restriction to $K$ is
irreducible.
Since $C_A(x)\leq L$,
\[
 |\operatorname{Cl}_A(x)|
 =|\operatorname{Cl}_{A/K}(\bar x)|\,|\operatorname{Cl}_L(x)|.
\]
Together with $\nu(x)=\bar\nu(\bar x)$ and
$(\zeta\nu)(1)=\zeta(1)\bar\nu(1)$, this proves
\eqref{r4:eq:22}; compare \cite[Lemma~2.2]{NS14}.

For (ii), $N_M(D)=N_N(D)$, and $b,c$ have the same Brauer correspondent
$c_0$ in this group. We claim that
\[
 \operatorname{Cl}_N(x)\cap C_N(D)
 =\operatorname{Cl}_M(x)\cap C_N(D).
\]
Indeed, if $x^n\in C_N(D)$, then $D^{n^{-1}}$ and $D$ are Sylow
$p$-subgroups of $C_N(x)$. Choose $u\in C_N(x)$ such that
$D^u=D^{n^{-1}}$. Then $un\in N_N(D)\leq M$ and $x^{un}=x^n$.
The two class sums therefore have the same $D$-Brauer image. Evaluating
their central forms through the common Brauer correspondent $c_0$ gives
\eqref{r4:eq:23}. This is the defect-class calculation in the proof of
\cite[Theorem~4.6]{NS14}. It depends only on the given block correspondence
and is independent of the multiplication result proved below.
\end{proof}

\begin{lemma}\label{r4:9}
Let $N\unlhd G$, let $H\leq G$ with $G=NH$, and set $M=N\cap H$.
Suppose that $K\unlhd G$ and $K\leq M$. Let $\xi\in\Irr(G)$ satisfy
\[
 \theta:=\xi_K\in\Irr(K),\qquad \operatorname{ht}(\theta)=0.
\]
Write $\bar G=G/K$, $\bar N=N/K$, $\bar H=H/K$, and $\bar M=M/K$.
Suppose that a fixed pair of associated projective representations
$\mathcal R,\mathcal R'$ affords a block isomorphism
\[
 (\bar G,\bar N,\bar\rho)\sim_b
 (\bar H,\bar M,\bar\rho'),\qquad
 \bar\rho\in\Irr_0(\bar N),\quad \bar\rho'\in\Irr_0(\bar M).
\]
Let $D\leq M$ be a $p$-subgroup such that $D_0=D\cap K$ is a defect
group of $\bl(\theta)$ and $\bar D=DK/K$ is a common defect group of
$\bl(\bar\rho)$ and $\bl(\bar\rho')$. Assume that
\[
 N_{\bar N}(\bar D)\leq\bar M,\qquad N_N(D)\leq M.
\]
The second containment follows from the first and is recorded for use in
the Sylow arguments below. Let $\rho,\rho'$ be the inflations of
$\bar\rho,\bar\rho'$, respectively, and set
\[
 \tau=\xi_N\rho\in\Irr(N),\qquad
 \tau'=\xi_M\rho'\in\Irr(M).
\]
Then
\begin{equation}\label{r4:eq:24}
 \tau\in\Irr_0(N\mid D)\quad\Longleftrightarrow\quad
 \tau'\in\Irr_0(M\mid D).
\end{equation}
If these conditions hold, let $\mathcal X$ be an ordinary representation
affording $\xi$, and let $\pi:G\longrightarrow G/K$ be the quotient map.
The pair
\[
 \mathcal P=\mathcal X\otimes(\mathcal R\circ\pi),\qquad
 \mathcal P'=\mathcal X_H\otimes(\mathcal R'\circ\pi_H)
\]
affords
\[
 (G,N,\tau)\sim_b(H,M,\tau').
\]
In particular, the tensor correspondences $\sigma_J$ defined by this same
pair satisfy
\begin{equation}\label{r4:eq:25}
 \bl(\sigma_J(\chi))^J=\bl(\chi)
 \qquad(N\leq J\leq G,\ \chi\in\Irr(J\mid\tau)).
\end{equation}
Neither inflation $\rho$ nor $\rho'$ is assumed to have height zero.
\end{lemma}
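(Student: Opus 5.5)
The plan is to treat the two assertions separately. The equivalence \eqref{r4:eq:24} is a defect-and-height computation. The block isomorphism is then checked condition by condition for the tensor pair.

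\emph{Step 1: degrees and heights.} I start with the equivalence \eqref{r4:eq:24}. Since $\xi_K=\theta$ is irreducible, Gallagher's theorem shows that $\tau=\xi_N\rho$ and $\tau'=\xi_M\rho'$ are irreducible. Their degrees are $\theta(1)\bar\rho(1)$ and $\theta(1)\bar\rho'(1)$. Because $\theta$ has height zero with defect group $D_0$, we have $\theta(1)_p=|K:D_0|_p$. Because $\bar\rho$ and $\bar\rho'$ are height zero with defect group $\bar D$, we have $\bar\rho(1)_p=|N:DK|_p$ and $\bar\rho'(1)_p=|M:DK|_p$. Since $|DK:K|=|D:D_0|$, this gives
\[
 \tau(1)_p=|N:D|_p,\qquad \tau'(1)_p=|M:D|_p .
\]
So each of $\tau,\tau'$ has height zero with defect group $D$ as soon as $D$ is a defect group of its block. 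Two facts remain to be shown. First, $D$ contains a defect group of $\bl(\tau)$; the degree bound $\tau(1)_p\ge|N:P|_p$ then forces equality. Second, the same holds for $\bl(\tau')$, and the two statements are equivalent.

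\emph{Step 2: central characters.} For the defect assertion I use central characters. By Lemma~\ref{r4:8}(i), for a $p'$-element $x$ the value $\lambda_\tau(\operatorname{Cl}_N(x)^+)$ factors as $\lambda_{\xi_L}(\operatorname{Cl}_L(x)^+)\,\lambda_{\bar\rho}(\operatorname{Cl}_{\bar N}(\bar x)^+)$. The same factorization holds for $\tau'$ in $M$. The block isomorphism for $(\bar G,\bar N,\bar\rho)\sim_b(\bar H,\bar M,\bar\rho')$ gives $\bl(\bar\rho')^{\bar N}=\bl(\bar\rho)$. Lemma~\ref{r4:8}(ii) then identifies the second factors for $p'$-elements $\bar x\in\bar M$ with $\bar D\in\Syl_p(C_{\bar N}(\bar x))$.

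\emph{Step 3: the main obstacle.} The hard part is the remaining factor, together with the choice of classes. I would use the characterization (\cite[Proposition~2.5, Theorem~4.4]{NS14}) that a block with a height-zero character has defect group $D$ exactly when its central character is nonzero on some defect class with defect group $D$. I would take $x\in C_M(D)$ a $p'$-element with $D\in\Syl_p(C_N(x))$. Lemma~\ref{r4:7} transfers this Sylow condition between $N$ and $M$, and also between $\bar N$ and $\bar M$. For the $\xi_L$-factor I would compare $\xi_{L\cap N}$ with $\xi_{L\cap M}$, using that both restrict irreducibly to $K$ with the same $\theta$.

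These values lie in the residue field, and both factors are computed over the common subgroup $L\cap M$ through $\theta$. A Brauer-type argument (the first main theorem applied to $N_N(D)\le M$) should show that $\bl(\tau')^N$ is defined and equals $\bl(\tau)$. This gives both directions of \eqref{r4:eq:24}. I would isolate this as the delicate point: the nonvanishing must be produced from height zero of the quotient characters, not of the inflations $\rho,\rho'$.

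\emph{Step 4: the block isomorphism.} Finally, $\mathcal P=\mathcal X\otimes(\mathcal R\circ\pi)$ and $\mathcal P'$ are associated projective representations with factor sets inflated from those of $\mathcal R,\mathcal R'$, so they agree on $H$. The central scalars on $C_G(N)$ and the comparison functions are products of those of $\mathcal X$ (which coincide on both sides) with those of $\mathcal R,\mathcal R'$, and these agree by hypothesis.

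For \eqref{r4:eq:25}, the tensor correspondence $\sigma_J$ is $\chi=\xi_J\cdot\operatorname{Inf}(\psi)\mapsto\xi_{J\cap H}\cdot\operatorname{Inf}(\psi')$, where $\psi'$ is the image of $\psi$ under the correspondence afforded by $(\mathcal R,\mathcal R')$ on $J/K$. The block hypothesis gives $\bl(\psi')^{J/K}=\bl(\psi)$. Repeating the central-character factorization of Step 2 with $J$ in place of $N$, via \cite[Theorem~4.4]{NS14}, then yields $\bl(\sigma_J(\chi))^J=\bl(\chi)$.
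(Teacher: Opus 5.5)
\textbf{There is a genuine gap in Step~3, which is where the equivalence \eqref{r4:eq:24} is actually decided.} Your Step~1 is fine. Using the bound $\tau(1)_p\ge|N:P|_p$ for a defect group $P$ of $\bl(\tau)$ is a legitimate and shorter substitute for the paper's lower bound $|F|\ge|D|$ via \cite[Proposition~2.5(b)]{NS14}.

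Take a defect class: a $p'$-element $x$ with $D\in\Syl_p(C_N(x))$ and $\lambda_\tau(\operatorname{Cl}_N(x)^+)\neq0$, so that $x\in C_N(D)\le M$. With $L/K=C_{\bar N}(\bar x)$, the factorization of Lemma~\ref{r4:8}(i) makes both $\lambda_{\xi_L}(\operatorname{Cl}_L(x)^+)$ and $\lambda_{\bar\rho}(\operatorname{Cl}_{\bar N}(\bar x)^+)$ nonzero. You still cannot apply Lemma~\ref{r4:8}(ii) to the second factor. The condition $D\in\Syl_p(C_N(x))$ does not give $\bar D\in\Syl_p(C_{\bar N}(\bar x))$, because $C_{\bar N}(\bar x)$ can be much larger than $C_N(x)K/K$.

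The missing idea is to extract this from the \emph{first} factor:
\begin{itemize}
\item Min--Max gives a defect group $E\le D$ of $\bl(\xi_L)$.
\item Since $(\xi_L)_K=\theta$ is irreducible, \cite[Proposition~2.5(d)]{NS14} forces $EK/K\in\Syl_p(L/K)$. Hence $\bar D\in\Syl_p(L/K)$ and $p\nmid[L:L_1]$, where $L_1=L\cap M$.
\item Only then is $r=|\operatorname{Cl}_L(x)|/|\operatorname{Cl}_{L_1}(x)|=[L:L_1]/[C_N(x):C_M(x)]$ a $p$-adic unit. This yields $\lambda_{\xi_L}(\operatorname{Cl}_L(x)^+)=r^*\lambda_{\xi_{L_1}}(\operatorname{Cl}_{L_1}(x)^+)$.
\end{itemize}
The observation that both characters ``restrict irreducibly to $K$ with the same $\theta$'' does not by itself compare these values. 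The converse direction needs the same argument, with Lemma~\ref{r4:7} applied in the quotient using $N_{\bar N}(\bar D)\le\bar M$.

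Your fallback, a ``Brauer-type argument'' giving $\bl(\tau')^N=\bl(\tau)$, does not work. Brauer's first main theorem does not say which block of $N$ corresponds to $\bl(\tau')$. That identity is the $J=N$ case of \eqref{r4:eq:25}. Obtaining it from \cite[Theorem~4.4]{NS14} presupposes the common defect group $D$, which is what you are trying to prove, so the argument is circular.

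\textbf{The block-isomorphism part has a second gap.} ``Repeating the factorization of Step~2 with $J$ in place of $N$'' does not verify \eqref{r4:eq:25}. Going through \cite[Theorem~4.4]{NS14} requires the normalized trace identity for \emph{every} $p'$-element $h\in H$ with $D\in\Syl_p(C_N(h))$.
\begin{itemize}
\item If $\bar D\in\Syl_p(C_{\bar N}(\bar h))$, the quotient isomorphism gives the identity for $\mathcal R,\mathcal R'$. Multiplying by $(|K|_{p'}\xi(h)/\theta(1)_{p'})^*$ gives it for $\mathcal P,\mathcal P'$.
\item If $\bar D$ is not Sylow in $C_{\bar N}(\bar h)$, the hypothesis on $(\mathcal R,\mathcal R')$ says nothing. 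You must show separately that both normalized traces vanish modulo $J(\mathcal O)$.
\end{itemize}
The paper handles the second case as follows. It extends $\tau,\tau'$ to $N\langle h\rangle$ and $M\langle h\rangle$, using a root-of-unity-valued one-dimensional projective representation that cancels the factor set. It then shows, by the contrapositive of Min--Max combined with \cite[Proposition~2.5(d)]{NS14}, that $\lambda_{\xi_{L_A}}$ and $\lambda_{\xi_{L_B}}$ vanish on the class of $h$. It concludes with the product formula and \cite[Lemma~4.2(a)]{NS14}. Your proposal has no counterpart for this case.
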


\begin{proof}
The restriction of $\mathcal X$ to every subgroup containing $K$ is
irreducible. Gallagher's theorem thus shows that
$\tau$ and $\tau'$ are irreducible. They are $G$-invariant and
$H$-invariant, respectively: $\xi$ is a character of $G$, and the quotient
characters are invariant under their respective overgroups.

We first show that $\xi_N$ and $\xi_M$ have height zero. If $E$ is a defect
group of $\bl(\xi_N)$, then the defect-intersection theorem shows that
$E\cap K$ is a defect group of $\bl(\theta)$. By \cite[Proposition~2.5(d)]{NS14},
$EK/K\in\Syl_p(N/K)$. Hence
\[
 |E|=|D_0|\,|N:K|_p,\qquad
 \xi_N(1)_p=\theta(1)_p=|K:D_0|_p,
\]
which proves the assertion for $\xi_N$. The argument for $\xi_M$ is
identical.

If $F$ is any defect group of $\bl(\tau)$, the intersection theorem and
\cite[Proposition~2.5(b)]{NS14} give
\[
 |F\cap K|=|D_0|,\qquad |FK/K|\geq|\bar D|.
\]
The latter inequality uses only that $\xi_N$ restricts irreducibly to $K$
and that $\bar D$ is a defect group of $\bl(\bar\rho)$. Consequently,
\begin{equation}\label{r4:eq:26}
 |F|\geq |D_0|\,|\bar D|=|D|.
\end{equation}
The same lower bound holds for every defect group of $\bl(\tau')$. Here we use only
the orders of the intersections with $K$; the intersections themselves
need not equal $D_0$.

The height-zero hypotheses on the quotient characters give
\begin{align}
 \tau(1)_p
 &=|K:D_0|_p\,|N/K:\bar D|_p=|N:D|_p,
 \label{r4:eq:27}\\
 \tau'(1)_p
 &=|K:D_0|_p\,|M/K:\bar D|_p=|M:D|_p.
 \label{r4:eq:28}
\end{align}
To obtain height zero for either product, it therefore suffices to show
that its block has defect group $D$.

Suppose first that $\tau\in\Irr_0(N\mid D)$. Choose a $p'$-element
$x\in N$ such that
\[
 D\in\Syl_p(C_N(x)),\qquad
 \lambda_\tau\bigl(\operatorname{Cl}_N(x)^+\bigr)\neq0.
\]
The characterization of block defect groups ensures the existence of such
a defect class. Since $x$ centralizes $D$, the normalizer containment
implies that $x$ lies in $M$.
Set
\[
 \bar x=xK,\qquad L/K=C_{\bar N}(\bar x),\qquad L_1=L\cap M.
\]
Then $L_1/K=C_{\bar M}(\bar x)$, $C_L(x)=C_N(x)$, and
$C_{L_1}(x)=C_M(x)$. Equation~\eqref{r4:eq:22} gives
\[
 0\neq\lambda_\tau\bigl(\operatorname{Cl}_N(x)^+\bigr)
 =\lambda_{\xi_L}\bigl(\operatorname{Cl}_L(x)^+\bigr)
  \lambda_{\bar\rho}\bigl(\operatorname{Cl}_{\bar N}(\bar x)^+\bigr).
\]
Both factors are nonzero. The Min--Max theorem gives a defect group $E$ of
$\bl(\xi_L)$ with $E\leq D$, because $D$ is a Sylow $p$-subgroup of
$C_L(x)$. Since $(\xi_L)_K=\theta$ is irreducible,
\cite[Proposition~2.5(d)]{NS14} gives $EK/K\in\Syl_p(L/K)$. As
$EK/K\leq\bar D\leq L/K$, we obtain
\[
 \bar D\in\Syl_p(L/K),\qquad
 \bar D\in\Syl_p(L_1/K),\qquad p\nmid[L:L_1].
\]
Applying \eqref{r4:eq:23} to the quotient blocks yields
\[
 \lambda_{\bar\rho'}\bigl(\operatorname{Cl}_{\bar M}(\bar x)^+\bigr)
 =\lambda_{\bar\rho}\bigl(\operatorname{Cl}_{\bar N}(\bar x)^+\bigr)
 \neq0.
\]
By Lemma~\ref{r4:7}, $D\in\Syl_p(C_M(x))$. Therefore
\[
 r=\frac{|\operatorname{Cl}_L(x)|}{|\operatorname{Cl}_{L_1}(x)|}
 =\frac{[L:L_1]}{[C_N(x):C_M(x)]}
\]
is a $p$-adic unit, although it need not be an integer. Since $\xi_L$ and
$\xi_{L_1}$ have the same degree,
\begin{equation}\label{r4:eq:29}
 \lambda_{\xi_L}\bigl(\operatorname{Cl}_L(x)^+\bigr)
 =r^*\lambda_{\xi_{L_1}}\bigl(\operatorname{Cl}_{L_1}(x)^+\bigr).
\end{equation}
Hence the local central form is nonzero. A second application of the
product formula gives $\lambda_{\tau'}(\operatorname{Cl}_M(x)^+)\neq0$.
The Min--Max theorem then gives a defect group $F'\leq D$ of
$\bl(\tau')$. By the lower bound \eqref{r4:eq:26}, we have $F'=D$,
and \eqref{r4:eq:28} gives height zero.

Conversely, assume that $\tau'\in\Irr_0(M\mid D)$. Choose a
$p'$-element $x\in M$ such that
\[
 D\in\Syl_p(C_M(x)),\qquad
 \lambda_{\tau'}\bigl(\operatorname{Cl}_M(x)^+\bigr)\neq0.
\]
Lemma~\ref{r4:7} gives $D\in\Syl_p(C_N(x))$. Again set
$L/K=C_{\bar N}(xK)$ and $L_1=L\cap M$. The product formula on the
local side gives
\[
 0\neq\lambda_{\tau'}\bigl(\operatorname{Cl}_M(x)^+\bigr)
 =\lambda_{\xi_{L_1}}\bigl(\operatorname{Cl}_{L_1}(x)^+\bigr)
  \lambda_{\bar\rho'}\bigl(\operatorname{Cl}_{\bar M}(xK)^+\bigr).
\]
Both factors are nonzero. The Min--Max theorem gives a defect group
$E_1\leq D$ of $\bl(\xi_{L_1})$. Irreducibility on $K$ and
\cite[Proposition~2.5(d)]{NS14} imply that $E_1K/K$ is Sylow in $L_1/K$.
Thus $\bar D\in\Syl_p(L_1/K)$. Applying Lemma~\ref{r4:7} in the quotient, using the quotient normalizer
containment, yields
\[
 \bar D\in\Syl_p(L/K).
\]
Hence $[L:L_1]$ is a $p'$-number, and the same class-length ratio $r$ is a
$p$-adic unit. Equation~\eqref{r4:eq:29} gives
$\lambda_{\xi_L}(\operatorname{Cl}_L(x)^+)\neq0$. The quotient
defect-class formula also gives
\[
 \lambda_{\bar\rho}\bigl(\operatorname{Cl}_{\bar N}(xK)^+\bigr)
 =\lambda_{\bar\rho'}\bigl(\operatorname{Cl}_{\bar M}(xK)^+\bigr)
 \neq0.
\]
The product formula now implies
$\lambda_\tau(\operatorname{Cl}_N(x)^+)\neq0$. Since $D$ is Sylow in
$C_N(x)$, the Min--Max theorem gives a defect group $F\leq D$ of
$\bl(\tau)$. The lower bound again forces $F=D$, and
\eqref{r4:eq:27} gives height zero. This establishes the equivalence
\eqref{r4:eq:24}.

Now assume that these equivalent conditions hold. For the specified pair
$\mathcal R,\mathcal R'$, write
$\alpha$ for the common factor set, viewed by inflation on
$G/N\cong\bar G/\bar N$. Since
$\mathcal X$ is ordinary, the tensor representations $\mathcal P,\mathcal P'$
defined in the statement both have factor set $\alpha$ and are associated
with $\tau,\tau'$.

If $c\in C_G(N)$, then $cK\in C_{\bar G}(\bar N)\leq\bar H$, so
$c\in H$. Since $\mathcal X_N$ is irreducible, $\mathcal X(c)$ is
scalar. Moreover, $\mathcal R(cK),\mathcal R'(cK)$ have the same scalar.
Consequently, $\mathcal P(c),\mathcal P'(c)$ have the same scalar as well. We have
already established the common defect group and the normalizer containment.
It remains to verify the normalized trace condition of
\cite[Theorem~4.4(i)(c)]{NS14} for this pair.

Let $h\in H$ be a $p'$-element with $D\in\Syl_p(C_N(h))$, and set
$\bar h=hK$. Suppose first that
\[
 \bar D\in\Syl_p(C_{\bar N}(\bar h)).
\]
Applying \cite[Theorem~4.4]{NS14} to the specified quotient pair, and using
the height-zero assumptions, gives
\[
 \left(\frac{|N/K|_{p'}\tr\mathcal R(\bar h)}{\bar\rho(1)_{p'}}\right)^*
 =\left(\frac{|M/K|_{p'}\tr\mathcal R'(\bar h)}{\bar\rho'(1)_{p'}}\right)^*.
\]
Multiplication by the reduction of the local integer
$|K|_{p'}\xi(h)/\theta(1)_{p'}$ gives
\begin{equation}\label{r4:eq:30}
 \left(\frac{|N|_{p'}\tr\mathcal P(h)}{\tau(1)_{p'}}\right)^*
 =\left(\frac{|M|_{p'}\tr\mathcal P'(h)}{\tau'(1)_{p'}}\right)^*.
\end{equation}
Here $\xi(1)=\theta(1)$, and we have used the $p'$-parts of the product
degrees.

Suppose next that $\bar D$ is not Sylow in $C_{\bar N}(\bar h)$.
By the quotient normalizer containment and Lemma~\ref{r4:7}, it is not
Sylow in $C_{\bar M}(\bar h)$ either. Set
\[
 A=N\langle h\rangle,\qquad B=M\langle h\rangle=A\cap H.
\]
The quotient $A/N\cong B/M$ is a cyclic $p'$-group. Choose a
one-dimensional projective representation $\mathcal Q$ of this cyclic
group with factor set equal to the corresponding restriction of $\alpha^{-1}$.
Its values may be chosen to be roots of unity: the recursive construction
along a generator requires only finitely many roots of the root-of-unity
values of $\alpha$. In particular, $\mathcal Q(hN)$ is a unit of $\mathcal O$.

Inflating $\mathcal R,\mathcal R'$ and tensoring each with the same
$\mathcal Q$ gives ordinary representations of $A,B$. Their
characters $\widetilde\rho,\widetilde\rho'$ extend $\rho,\rho'$ and
are trivial on $K$. Set
\[
 \widetilde\tau=\xi_A\widetilde\rho,\qquad
 \widetilde\tau'=\xi_B\widetilde\rho'.
\]
These characters extend $\tau,\tau'$ and are therefore irreducible. Moreover,
\[
 \widetilde\tau(h)=\mathcal Q(hN)\tr\mathcal P(h),\qquad
 \widetilde\tau'(h)=\mathcal Q(hN)\tr\mathcal P'(h).
\]
No assertion about the heights of $\widetilde\rho,\widetilde\rho'$ is
needed. Set
\[
 L_A/K=C_{A/K}(\bar h),\qquad L_B=L_A\cap B.
\]
Then $L_B/K=C_{B/K}(\bar h)$. Since $A/N$ and $B/M$ are $p'$-groups,
\[
 |L_A/K|_p=|C_{\bar N}(\bar h)|_p>|\bar D|,\qquad
 |L_B/K|_p=|C_{\bar M}(\bar h)|_p>|\bar D|.
\]
Every defect group $E$ of $\bl(\xi_{L_A})$ satisfies
$EK/K\in\Syl_p(L_A/K)$, so no such $E$ is contained in $D$. On the
other hand, $C_{L_A}(h)=C_A(h)$ and $D\in\Syl_p(C_A(h))$, since
$A/N$ is a $p'$-group. The contrapositive of the Min--Max theorem gives
\[
 \lambda_{\xi_{L_A}}\bigl(\operatorname{Cl}_{L_A}(h)^+\bigr)=0.
\]
The same argument, using $B/M$ and $D\in\Syl_p(C_M(h))$, gives
\[
 \lambda_{\xi_{L_B}}\bigl(\operatorname{Cl}_{L_B}(h)^+\bigr)=0.
\]
The product central-form identity, applied in $A$ and $B$, now yields
\[
 \lambda_{\widetilde\tau}\bigl(\operatorname{Cl}_A(h)^+\bigr)=0,
 \qquad
 \lambda_{\widetilde\tau'}\bigl(\operatorname{Cl}_B(h)^+\bigr)=0.
\]
Apply \cite[Lemma~4.2(a)]{NS14} to the height-zero characters $\tau,\tau'$
and their ordinary extensions. We obtain
\begin{align*}
 \left(\frac{|N|_{p'}\widetilde\tau(h)}{\tau(1)_{p'}}\right)^*
 &=|C_N(h)|_{p'}^*\lambda_{\widetilde\tau}
   \bigl(\operatorname{Cl}_A(h)^+\bigr)=0,\\
 \left(\frac{|M|_{p'}\widetilde\tau'(h)}{\tau'(1)_{p'}}\right)^*
 &=|C_M(h)|_{p'}^*\lambda_{\widetilde\tau'}
   \bigl(\operatorname{Cl}_B(h)^+\bigr)=0.
\end{align*}
Substitute the trace expressions for $\widetilde\tau(h)$ and
$\widetilde\tau'(h)$ and cancel the common nonzero unit
$\mathcal Q(hN)^*$. This proves \eqref{r4:eq:30} in the second case.
The same lemma ensures the required local integrality.

The two cases cover all elements required by \cite[Theorem~4.4]{NS14}.
Thus the specified pair $\mathcal P,\mathcal P'$ satisfies the conditions
on factor sets, central scalars, and normalized traces. The cited theorem
now gives the block triple isomorphism and \eqref{r4:eq:25}, with all block
correspondences afforded by the tensor pair specified in the statement.
\end{proof}

\begin{remark}\label{r4:10}
A quotient bijection on $\Irr_0(\bar N\mid\bar D)$ need not send every
quotient character to a product with defect group $D$. The relevant subset is
\[
 \left\{\bar\rho\in\Irr_0(\bar N\mid\bar D):
 \xi_N\operatorname{Inf}^{N}_{\bar N}(\bar\rho)
 \in\Irr_0(N\mid D)\right\}.
\]
By the forward implication of Lemma~\ref{r4:9}, this subset maps into the
corresponding local subset. The reverse implication gives the analogous
property for the inverse quotient bijection. The induced multiplication
correspondence is therefore a bijection. This argument does not require
inflation to preserve height zero.
\end{remark}

We now apply the multiplication lemma to the central extension and the
quotient correspondence $\Omega_Q$ constructed above. The lemma identifies the
subsets on which this correspondence yields height-zero characters with the
prescribed defect group.

For $\tau\in\Irr(\widehat X\mid\zeta_0)$, Gallagher's theorem gives a
unique $\rho\in\Irr(V)$ such that
\begin{equation}\label{r4:eq:31}
 \tau=\widetilde\zeta_{\widehat X}
       \operatorname{Inf}^{\widehat X}_{V}\rho.
\end{equation}
If $\tau\in\Irr_0(\widehat X\mid\widehat D,\zeta_0)$, then
$\rho\in\Irr_0(V\mid Q)$ by \cite[Proposition~2.5(c)]{NS14}. Indeed, the defect projection argument places a defect group of
$\bl(\rho)$ inside $Q$, while taking the ratio of the identities
\[
 |\widehat D|\tau(1)_p=|\widehat X|_p,\qquad
 |D_{00}|\zeta_0(1)_p=|K_0|_p
\]
forces equality of orders and height zero for $\rho$. Set
$\rho'=\Omega_Q(\rho)$ and define
\begin{equation}\label{r4:eq:32}
 \tau'=\widetilde\zeta_{\widehat Y}
        \operatorname{Inf}^{\widehat Y}_{N_V(Q)}\rho'.
\end{equation}
The forward implication of Lemma~\ref{r4:9} gives
$\tau'\in\Irr_0(\widehat Y\mid\widehat D,\zeta_0)$. Conversely,
apply Gallagher's theorem to any such $\tau'$. The resulting factor
$\rho'$ has height zero and defect group $Q$. With
$\rho=\Omega_Q^{-1}(\rho')$, the reverse implication of the same lemma
shows that the product in \eqref{r4:eq:31} has height zero and defect group
$\widehat D$.
Thus \eqref{r4:eq:31} and \eqref{r4:eq:32} define a bijection
\begin{equation}\label{r4:eq:33}
 \widehat\Upsilon:
 \Irr_0(\widehat X\mid\widehat D,\zeta_0)
 \longrightarrow
 \Irr_0(\widehat Y\mid\widehat D,\zeta_0).
\end{equation}
In this construction, \eqref{r4:eq:21} is restricted to the subsets whose
Gallagher products have height zero. The condition is imposed only on
these subsets, not on the entire domain of the quotient bijection.

Let $\nu:Z\longrightarrow\mathbb C^\times$ be given by $\nu(z)=z$,
and also regard it as a character of $\overline Z$, using $K_0\cap Z=1$.
Since $L(z)=\nu(z)I$, equation~\eqref{r4:eq:31} gives
\begin{equation}\label{r4:eq:34}
 Z\leq\ker\tau\quad\Longleftrightarrow\quad
 \rho_{\overline Z}=\rho(1)\nu^{-1}.
\end{equation}
By the central-scalar condition in the quotient induction relation,
\eqref{r4:eq:21} preserves this central-character fiber. Therefore
\eqref{r4:eq:33} restricts to a bijection between characters whose kernels
contain $Z$, and descends to a bijection
\begin{equation}\label{r4:eq:35}
 \Upsilon_\zeta:\Irr_0(X\mid D,\zeta)
 \longrightarrow\Irr_0(Y\mid D,\zeta).
\end{equation}
The reverse implication for height zero also applies on this central fiber,
which proves surjectivity in \eqref{r4:eq:35}.

Take $a=(h,\sigma)\in(C\times\mathcal H)_\zeta$.
Lemma~\ref{r4:2} shows that $f_a$ stabilizes $K_0$ and $\widehat X$,
and \eqref{r4:eq:15} shows that it stabilizes $\widehat D$. It therefore
fixes $Q$, so $t_a=(1,f_a)\in N_{\mathcal A}(Q)$. The mixed invariance
of the ordinary extension gives
\begin{equation}\label{r4:eq:36}
 \bigl(\widetilde\zeta\operatorname{Inf}\rho\bigr)^{(f_a,\sigma)}
 =\widetilde\zeta\operatorname{Inf}
     \bigl(\rho^{(t_a,\sigma)}\bigr).
\end{equation}
The $N_{\mathcal A}(Q)\times\mathcal H$-equivariance of
\eqref{r4:eq:21}, together with \eqref{r4:eq:36}, proves that
\eqref{r4:eq:33} and \eqref{r4:eq:35} are
$(C\times\mathcal H)_\zeta$-equivariant. The construction also shows that nonemptiness of either fiber implies
nonemptiness of the other. Together with the empty bijection chosen above,
this covers all cases.
\subsection{Descent of projective representations}\label{r4:sub:5}

We work with the central extension, quotient correspondence, and projective
representations fixed above. Recall that
\[
 A_0=A_{\zeta^{\mathcal H}},\qquad B=A_\zeta,\qquad
 E=KN_{A_0}(D),\qquad Y=KN_X(D),
\]
so that $E\cap X=Y$. Write
\[
 \pi:\widehat B\longrightarrow B,\qquad
 \ker\pi=Z\leq Z(\widehat B),\qquad
 K_0\unlhd\widehat B,\qquad K_0\cong K.
\]
We now write $\mathcal L$ for the ordinary representation $L$.
It affords $\widetilde\zeta$, its restriction to $K_0$ affords
$\zeta_0$, and
\[
 \mathcal L(z)=\nu(z)I\qquad(z\in Z),
\]
where $\nu$ is faithful. As above, let $\widehat X=\pi^{-1}(X)$ and
let $\widehat D$ be the unique Sylow $p$-subgroup of $\pi^{-1}(D)$. Set
\[
 U=K_0\widehat D,\qquad
 q:\widehat B\longrightarrow\overline B=\widehat B/K_0,\qquad
 V=\widehat X/K_0,\qquad Q=U/K_0.
\]
The preceding results give
\begin{equation}\label{r4:eq:37}
 \begin{aligned}
 \widehat Y&=N_{\widehat X}(U)
   =K_0N_{\widehat X}(\widehat D)=\pi^{-1}(Y),\\
 \widehat E&=N_{\widehat B}(U)
   =K_0N_{\widehat B}(\widehat D)=\pi^{-1}(E\cap B).
 \end{aligned}
\end{equation}
In particular, $\widehat Y/K_0=N_V(Q)$ and
$\widehat E/K_0=N_{\overline B}(Q)$. Thus the local preimage is the normalizer of $U$;
\eqref{r4:eq:37} does not identify it with the normalizer of
$\widehat D$ alone.

For $a=(h,\sigma)\in\Gamma=(A_0\times\mathcal H)_\zeta$, the
automorphisms $f_a$ constructed in Lemma~\ref{r4:2} satisfy
\begin{equation}\label{r4:eq:38}
 \begin{aligned}
 \pi(f_a(g))&=h\pi(g)h^{-1},& f_a(K_0)&=K_0,\\
 f_a(\widehat X)&=\widehat X,&
 \widetilde\zeta^{f_a\sigma}&=\widetilde\zeta.
 \end{aligned}
\end{equation}
Their image $F$ is finite. We continue to work with
$\mathcal A=\overline B\rtimes F$ and the fixed pair of projective
representations realizing the strong relation supplied by induction on
$V\unlhd\mathcal A$. Working with $\overline B$ as a subgroup preserves
its centralizer of $V$ and the corresponding scalar values.

\begin{lemma}\label{r4:11}
If $a=(h,\sigma)\in(E\times\mathcal H)_\zeta$, then $f_a(U)=U$.
Thus $t_a=(1,f_a)\in N_{\mathcal A}(Q)$, where $f_a$ is viewed as an
element of $F$.
\end{lemma}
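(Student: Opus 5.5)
The plan is to reduce everything to the behaviour of $f_a$ on the two factors of $U=K_0\widehat D$. By \eqref{r4:eq:38}, $f_a(K_0)=K_0$. So it suffices to show that $f_a(\widehat D)\leq U$. In fact we will show $f_a(\widehat D)=\widehat D$ whenever the first coordinate $h$ normalizes $D$.

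Write $a=(h,\sigma)$ with $h\in E=KC$, where $C=N_{A_0}(D)$. First factor $h=kc$ with $k\in K$ and $c\in C$. Since $\zeta$ is $K$-invariant (indeed $K\leq B$ fixes $\zeta$), the element $(k,1)$ lies in $\Gamma$. Hence $b=(k^{-1},1)a=(c,\sigma)$ also lies in $(C\times\mathcal H)_\zeta$, because $\zeta^{(k^{-1},1)}=\zeta$ and the actions commute. By \eqref{r4:eq:5}, $f_a=f_{(k,1)}\circ f_b$. By Remark~\ref{r4:3}, $f_{(k,1)}$ is conjugation by $(k,1)\in K_0$, which normalizes $U$ because $K_0\unlhd\widehat B$ and $K_0\leq U$. (If one prefers to avoid the order of composition, the same reasoning applies to either ordering.) So it remains to treat $a=(c,\sigma)$ with $c\in N_{A_0}(D)$.

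For such $a$, $\pi(f_a(g))=c\pi(g)c^{-1}$ by \eqref{r4:eq:38}, and $f_a(Z)=Z$ by \eqref{r4:eq:7}. Hence $f_a$ maps $\pi^{-1}(D)$ onto $\pi^{-1}(cDc^{-1})=\pi^{-1}(D)$. Since $\widehat D$ is the unique Sylow $p$-subgroup of $\pi^{-1}(D)$ by \eqref{r4:eq:15}, it is characteristic there, so $f_a(\widehat D)=\widehat D$. Combining this with $f_a(K_0)=K_0$ gives $f_a(U)=U$.

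For the last assertion, $f_a$ acts on $\overline B=\widehat B/K_0$ as an element of $F$, and in $\mathcal A=\overline B\rtimes F$ conjugation by $t_a=(1,f_a)$ induces $\overline f_a$. Since $f_a(U)=U$, we get $\overline f_a(Q)=Q$, so $t_a\in N_{\mathcal A}(Q)$.

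The only delicate point is the reduction from $E$ to $C$. There we must check that $(k,1)\in\Gamma$ and that the composition law \eqref{r4:eq:5} applies with the given left-action convention. Both are routine given Lemma~\ref{r4:2} and Remark~\ref{r4:3}.
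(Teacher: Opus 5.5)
Your argument is correct, but it takes a different route from the paper. You factor $h=kc$ with $k\in K$ and $c\in C$. You then split off $f_{(k,1)}$, using the composition law \eqref{r4:eq:5} and Remark~\ref{r4:3}, as conjugation by $(k,1)\in K_0\leq U$. Finally you treat $(c,\sigma)$ through the characteristic Sylow subgroup $\widehat D$ of $\pi^{-1}(D)$ from \eqref{r4:eq:15}; this last step is the argument the paper already uses for $(C\times\mathcal H)_\zeta$ just before \eqref{r4:eq:36}.

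The paper avoids any factorization. Since $K\unlhd A$ and $h\in KN_{A_0}(D)$, the element $h$ normalizes $KD$, so $f_a$ stabilizes both $\pi^{-1}(KD)$ and $K_0$. The quotient $\pi^{-1}(KD)/K_0$ is a central extension of the $p$-group $KD/K$ by the image of $Z$. Its $p'$-part is a central normal Hall subgroup, so the quotient has a unique Sylow $p$-subgroup, namely $U/K_0$, and $f_a$ must fix it.

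The paper's version is a one-step argument. It needs neither the cocycle identity \eqref{r4:eq:10} behind \eqref{r4:eq:5} nor the computation in Remark~\ref{r4:3}, and it applies verbatim to any $h\in A_0$ normalizing $KD$. Your version reuses facts already established and gives the finer statement $f_b(\widehat D)=\widehat D$ on the $C$-part. In exchange it depends on those two auxiliary results, but both are valid, so there is no gap.
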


\begin{proof}
Since $h\in KN_{A_0}(D)$, it normalizes $KD$.
Equation~\eqref{r4:eq:38} shows that $f_a$ stabilizes $\pi^{-1}(KD)$
and $K_0$. The quotient $\pi^{-1}(KD)/K_0$ is a central cyclic extension
of the $p$-group $KD/K$. Its central $p'$-part is a normal Hall
$p'$-subgroup. By the Schur--Zassenhaus theorem and centrality, the quotient has a
unique Sylow $p$-subgroup, namely $U/K_0$. Hence $f_a$ stabilizes $U$.
This argument requires neither $h\in B$ nor $h\in N_{A_0}(D)$.
\end{proof}

Fix $\chi\in\Irr_0(X\mid D,\zeta)$ and set $\tau=\chi\circ\pi$.
Let $\rho\in\Irr_0(V\mid Q)$ be the unique character satisfying
\[
 \tau=\widetilde\zeta_{\widehat X}\operatorname{Inf}(\rho).
\]
Set $\rho'=\Omega_Q(\rho)$ and
\[
 \tau'=\widetilde\zeta_{\widehat Y}\operatorname{Inf}(\rho').
\]
We henceforth omit inflation symbols when no confusion can arise.
By the preceding lemmas, $\tau'$ has height zero and $\widehat D$
is a common defect group of $\bl(\tau)$ and $\bl(\tau')$.

\begin{lemma}\label{r4:12}
We have $Z\leq\ker\tau'$. Let $\chi'\in\Irr_0(Y\mid D,\zeta)$ be
the character obtained by descending $\tau'$. Then
\begin{equation}\label{r4:eq:39}
 (E\times\mathcal H)_\chi=(E\times\mathcal H)_{\chi'}.
\end{equation}
\end{lemma}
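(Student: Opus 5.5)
The plan has two parts. First show that $Z\leq\ker\tau'$, which lets $\tau'$ descend to $\chi'$. Then compare mixed stabilizers through the Gallagher factors $\rho$ and $\rho'$.

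\emph{Step 1: the kernel.} Since $\tau=\chi\circ\pi$, we have $Z\leq\ker\tau$. By \eqref{r4:eq:34}, this means $\rho_{\overline Z}=\rho(1)\nu^{-1}$. The pair affording the relation of Hypothesis~\ref{r4:1} for $\rho$ and $\rho'$ satisfies the central-scalar condition. The subgroup $\overline Z$ is central in $V$, so it lies in $C_{\mathcal A}(V)$, which is contained in the local overgroup. Hence $\mathcal P(\bar z)$ and $\mathcal P'(\bar z)$ are the same scalar $\nu(z)^{-1}$, and so $\rho'_{\overline Z}=\rho'(1)\nu^{-1}$. In $\tau'=\widetilde\zeta_{\widehat Y}\rho'$, the factor $\mathcal L(z)=\nu(z)I$ then cancels this scalar, giving $Z\leq\ker\tau'$. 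Descend $\tau'$ to $\chi'\in\Irr(Y)$; by the bijection \eqref{r4:eq:35}, $\chi'\in\Irr_0(Y\mid D,\zeta)$.

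\emph{Step 2: stabilizers.} Take $a=(h,\sigma)\in E\times\mathcal H$. Both $\chi$ and $\chi'$ lie over $\zeta$ and over no other conjugate, because $K\trianglelefteq X$ and $\zeta$ is $X$-invariant. So if $a$ fixes $\chi$ or $\chi'$, it fixes $\zeta$, and therefore both sides of \eqref{r4:eq:39} lie in $(E\times\mathcal H)_\zeta\subseteq\Gamma$.

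For such $a$, Lemma~\ref{r4:11} gives $t_a=(1,f_a)\in N_{\mathcal A}(Q)$. The $X$-side is handled by Proposition~\ref{r4:5}: $\chi^a=\chi$ holds iff $\rho^{(t_a,\sigma)}=\rho$. For the $Y$-side, the same argument works with $\widehat X$ replaced by $\widehat Y$. This needs $f_a(\widehat Y)=\widehat Y$, which follows from \eqref{r4:eq:37}, Lemma~\ref{r4:11} and \eqref{r4:eq:38}, since $\widehat Y=N_{\widehat X}(U)$. Combined with the mixed invariance \eqref{r4:eq:36} of $\widetilde\zeta$ and Gallagher uniqueness on $\widehat Y$, this gives $\chi'^a=\chi'$ iff $\rho'^{(t_a,\sigma)}=\rho'$.

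Finally, $\Omega_Q$ is $N_{\mathcal A}(Q)\times\mathcal H$-equivariant and bijective, so $\rho^{(t_a,\sigma)}=\rho$ iff $\rho'^{(t_a,\sigma)}=\rho'$. Chaining these equivalences gives \eqref{r4:eq:39}.

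\emph{Main obstacle.} The delicate step is transferring the equivalence \eqref{r4:eq:13} to the local side. Descent along $\pi$ has to intertwine $a$ acting on $Y$ with $f_a$ acting on $\widehat Y$; this holds because $\pi\circ f_a$ is conjugation by $h$, by \eqref{r4:eq:38}. One must also keep the pair $(f_a,\sigma)$ rather than $f_a$ alone, as Remark~\ref{r4:6} warns. In addition, the central-scalar argument in Step 1 requires that $\overline Z$ lies in the local group $N_{\overline B}(Q)$ used by the relation. This holds because $Z$ is central and $Z\leq\widehat E$.
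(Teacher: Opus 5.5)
Your proposal is correct and follows essentially the same route as the paper. The central-scalar condition of the quotient relation moves the $\overline Z$-fiber from $\rho$ to $\rho'$, so that $\mathcal L(z)=\nu(z)I$ cancels and $Z\leq\ker\tau'$. The stabilizer equality then comes from Gallagher uniqueness on $\widehat X$ and $\widehat Y$ together with the $N_{\mathcal A}(Q)\times\mathcal H$-equivariance and injectivity of $\Omega_Q$ applied to $t_a$.
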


\begin{proof}
Since $Z\leq\ker\tau$, its Gallagher factor satisfies
\[
 \rho_{q(Z)}=\rho(1)\nu^{-1},
\]
where we identify characters via $Z\cong q(Z)$. The subgroup
$q(Z)\leq Z(V)$ lies in $N_V(Q)$ and centralizes $V$. The central-scalar
condition in the quotient strong relation gives
$\rho'_{q(Z)}=\rho'(1)\nu^{-1}$, whence
$\tau'|_Z=\tau'(1)1_Z$. Descent preserves the degree, and the defect group becomes
$\widehat DZ/Z\cong D$. It therefore preserves height zero.

Let $a=(h,\sigma)\in(E\times\mathcal H)_\chi$. The restriction
$\chi_K$ is a positive integer multiple of $\zeta$, so $a\in\Gamma$.
Equation~\eqref{r4:eq:38} gives $\tau^{f_a\sigma}=\tau$. Since
$\widetilde\zeta^{f_a\sigma}=\widetilde\zeta$, the injectivity of the
Gallagher correspondence gives
\[
 \rho^{t_a\sigma}=\rho.
\]
By Lemma~\ref{r4:11}, $t_a\in N_{\mathcal A}(Q)$. The mixed equivariance
of $\Omega_Q$ gives $(\rho')^{t_a\sigma}=\rho'$. Multiplying by
$\widetilde\zeta_{\widehat Y}$ yields
$(\tau')^{f_a\sigma}=\tau'$, and hence $(\chi')^a=\chi'$.

Conversely, if $a$ fixes $\chi'$, the homogeneity of its restriction
to $K$ again gives $a\in\Gamma$. The injectivity of the Gallagher
correspondence gives
$(\rho')^{t_a\sigma}=\rho'$. Equivariance and injectivity of $\Omega_Q$
imply $\rho^{t_a\sigma}=\rho$, and hence $\chi^a=\chi$.
\end{proof}

\begin{proposition}\label{r4:13}
There is a pair of projective representations associated with
$\tau$ and $\tau'$, respectively,
\[
 \widehat{\mathcal P}:\widehat B_\tau\longrightarrow
      \GL_{\tau(1)}(\mathbb Q^{\mathrm{ab}}),
 \qquad
 \widehat{\mathcal P}':\widehat E_{\tau'}\longrightarrow
      \GL_{\tau'(1)}(\mathbb Q^{\mathrm{ab}}),
\]
with a common factor set and equal central scalars, satisfying all
intermediate-group block conditions. For every
$a=(h,\sigma)\in(E\times\mathcal H)_\chi$, the comparison functions of these representations under $f_a\sigma$ agree.
\end{proposition}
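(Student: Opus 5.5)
The plan is to build $\widehat{\mathcal P}$ and $\widehat{\mathcal P}'$ as tensor products: the ordinary representation $\mathcal L$ on one side, and the projective pair supplied by Hypothesis~\ref{r4:1} for $V\unlhd\mathcal A$, inflated along $q$, on the other. Then we check the conditions of the strong relation using Lemma~\ref{r4:9} for the block part and Lemma~\ref{r4:2} for the Galois part.

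\emph{Step 1 (construction).} Let $(\mathcal R,\mathcal R')$ be the pair affording
\[
 (\mathcal A_{\rho^{\mathcal H}},V,\rho)_{\mathcal H}\geq_b\bigl(N_{\mathcal A}(Q)_{(\rho')^{\mathcal H}},N_V(Q),\rho'\bigr)_{\mathcal H}.
\]
Restrict it to $\overline B_\rho$ and $N_{\overline B}(Q)_{\rho'}$. By Gallagher uniqueness, $\widehat B_\tau=q^{-1}(\overline B_\rho)$, and by \eqref{r4:eq:37}, $\widehat E_{\tau'}=q^{-1}(N_{\overline B}(Q)_{\rho'})$. Define
\[
 \widehat{\mathcal P}=\mathcal L|_{\widehat B_\tau}\otimes(\mathcal R\circ q),
 \qquad
 \widehat{\mathcal P}'=\mathcal L|_{\widehat E_{\tau'}}\otimes(\mathcal R'\circ q).
\]
Since $\mathcal L$ is ordinary, both have the factor set of $(\mathcal R,\mathcal R')$ inflated along $q$. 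They are associated with $\tau$ and $\tau'$ because $\mathcal L_{\widehat X}\otimes\mathcal R_V$ affords $\widetilde\zeta_{\widehat X}\rho=\tau$. Central scalars match: for $c\in C_{\widehat B_\tau}(\widehat X)$, $\mathcal L(c)$ is scalar because $\mathcal L_{K_0}$ is irreducible, and $q(c)$ centralizes $V$, so $\mathcal R$ and $\mathcal R'$ give the same scalar there.

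\emph{Step 2 (blocks).} Apply Lemma~\ref{r4:9} with $G=\widehat B_\tau$, $N=\widehat X$, $H=\widehat E_{\tau'}$, $M=\widehat Y$, with $K_0$ in the role of $K$, $\xi=\widetilde\zeta_{\widehat B_\tau}$, and the quotient isomorphism afforded by $(\mathcal R,\mathcal R')$. The defect-group hypotheses were verified in \S\ref{r4:sub:3}: $D_{00}$ is a defect group of $\zeta_0$, $Q$ is common, $N_V(Q)$ is the local group, and $\tau,\tau'$ have height zero with defect group $\widehat D$. Lemma~\ref{r4:9} is stated for the ordinary block relation $\sim_b$ with an irreducible extension $\xi$, and it yields \eqref{r4:eq:25} for this very tensor pair, which gives the intermediate block conditions.

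\emph{Step 3 (comparison functions), the main obstacle.} Fix $a=(h,\sigma)\in(E\times\mathcal H)_\chi$. By Lemma~\ref{r4:12} and Lemma~\ref{r4:11}, $t_a\sigma$ lies in the mixed stabilizer of $\rho$ inside $N_{\mathcal A}(Q)\times\mathcal H$. The strong relation for $(\mathcal R,\mathcal R')$ therefore gives equal comparison functions $\mu^{\mathcal R}_{t_a\sigma}=\mu^{\mathcal R'}_{t_a\sigma}$ on $N_{\overline B}(Q)_{\rho'}$. By \eqref{r4:eq:8}, $\mathcal L(f_a(g))^\sigma=S_a^{-1}\mathcal L(g)S_a$ holds with trivial scalar on all of $\widehat B$, and the same $S_a$ serves both sides. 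Hence
\[
 \widehat{\mathcal P}(f_a(g))^\sigma=\mu^{\mathcal R}_{t_a\sigma}(q(g))\,(S_a\otimes T)^{-1}\widehat{\mathcal P}(g)(S_a\otimes T),
\]
and the analogous identity holds for $\widehat{\mathcal P}'$ with $T'$. Uniqueness of normalized comparison functions \cite[Lemma~1.4]{NSV20} then identifies them with the inflations of $\mu^{\mathcal R}$ and $\mu^{\mathcal R'}$, which agree on $\widehat E_{\tau'}$. Two technical points need care. First, conjugation by $t_a$ in $\mathcal A$ must induce exactly $\overline f_a$ on $\overline B$; this is Remark~\ref{r4:6}. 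Second, $(f_a,\sigma)$ must retain the Galois component, so we use the pair $(t_a,\sigma)$ rather than $f_a$ alone.
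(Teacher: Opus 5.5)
Your proposal is correct and follows the paper's proof essentially step for step: you tensor $\mathcal L$ with the inflated restriction of the induction pair $(\mathcal R,\mathcal R')$, use Lemma~\ref{r4:9} for the intermediate-group block conditions, and combine the intertwiner of $\mathcal L$ with the quotient comparison function $\kappa_a$ for the mixed condition. The paper's proof additionally verifies the factorization hypothesis $G=NH$ of Lemma~\ref{r4:9}, namely $\overline B_\rho=VN_{\overline B_\rho}(Q)$ (by a Frattini argument on defect groups of $\bl(\rho)$), and the inclusion $\widehat E_{\tau'}=\widehat E\cap\widehat B_\tau\leq\widehat B_\tau$; you should state both, while taking $S_a$ from \eqref{r4:eq:8} instead of the paper's $U_a$ is harmless.
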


\begin{proof}
The homogeneous restrictions to $K$ give $A_\chi\leq B$ and
$E_{\chi'}\leq B$.
Inflation and \eqref{r4:eq:39} give
\[
 \widehat B_\tau=\pi^{-1}(A_\chi),\qquad
 \widehat E_{\tau'}=\pi^{-1}(E_{\chi'})
   =\widehat E\cap\widehat B_\tau.
\]
The ordinary character $\widetilde\zeta$ is invariant under conjugation
by $\widehat B$. The injectivity of the Gallagher correspondence therefore also gives
\[
 q(\widehat B_\tau)=\overline B_\rho,\qquad
 q(\widehat E_{\tau'})=N_{\overline B_\rho}(Q).
\]
The group $\overline B_\rho$ stabilizes $\bl(\rho)$, which has defect
group $Q$. The conjugacy of defect groups in $V$ implies
\[
 \overline B_\rho=VN_{\overline B_\rho}(Q).
\]
Thus the restricted triples satisfy the factorization required by the
multiplication lemma.

Let $\mathcal R,\mathcal R'$ be the fixed projective representations
supplied by induction, defined on $\mathcal A_\rho$ and
$N_{\mathcal A}(Q)_{\rho'}$, respectively. Restrict them to the
subgroups $\overline B_\rho$ and
$N_{\overline B_\rho}(Q)$, and inflate along $q$, obtaining
$\widehat{\mathcal R},\widehat{\mathcal R}'$. These restrictions preserve
the common factor set, central scalars, and all block conditions for
intermediate groups in the restricted range. Define
\begin{equation}\label{r4:eq:40}
 \widehat{\mathcal P}
   =\mathcal L|_{\widehat B_\tau}\otimes\widehat{\mathcal R},
 \qquad
 \widehat{\mathcal P}'
   =\mathcal L|_{\widehat E_{\tau'}}\otimes\widehat{\mathcal R}'.
\end{equation}
Since $\mathcal L$ is ordinary, the common factor set is the inflation
of the quotient factor set. The restrictions to $\widehat X$ and $\widehat Y$ afford $\tau$ and
$\tau'$, respectively.

If $c\in C_{\widehat B_\tau}(\widehat X)$, then $\mathcal L(c)$ is
scalar because $\mathcal L|_{K_0}$ is irreducible. The element $q(c)$
centralizes $V$, so $\widehat{\mathcal R}(c)$ and
$\widehat{\mathcal R}'(c)$ are scalar matrices with the same scalar value. Thus
$\widehat{\mathcal P}(c)$ and $\widehat{\mathcal P}'(c)$ also have
the same scalar value. Applying Lemma~\ref{r4:9} to this pair
$\widehat{\mathcal R},\widehat{\mathcal R}'$ and to $\mathcal L$
gives all intermediate-group block conditions for the pair in
\eqref{r4:eq:40}.

Now take $a=(h,\sigma)\in(E\times\mathcal H)_\chi$. We have proved
$\rho^{t_a\sigma}=\rho$ and $(\rho')^{t_a\sigma}=\rho'$.
The quotient strong relation supplies a comparison function $\kappa_a$
and invertible matrices $T_a,T_a'$ such that
\[
 \mathcal R(\overline f_a(x))^\sigma
 =\kappa_a(x)T_a\mathcal R(x)T_a^{-1}
 \qquad(x\in\overline B_\rho),
\]
and the same $\kappa_a$ occurs on the local side. The restrictions are well-defined: $t_a$
normalizes $\overline B$, and mixed stability implies that it also
normalizes $\overline B_\rho$. The same argument applies locally.

By Lemma~\ref{r4:2}, $\mathcal L^{f_a\sigma}$ and
$\mathcal L$ afford the same ordinary character of $\widehat B$.
Thus there is an invertible matrix $U_a$ such that
\[
 \mathcal L(f_a(g))^\sigma
 =U_a\mathcal L(g)U_a^{-1}\qquad(g\in\widehat B).
\]
This intertwining identity for ordinary representations involves no
scalar comparison function. For $g\in\widehat B_\tau$, we obtain
\begin{equation}\label{r4:eq:41}
 \widehat{\mathcal P}(f_a(g))^\sigma
 =\kappa_a(q(g))(U_a\otimes T_a)\widehat{\mathcal P}(g)
  (U_a\otimes T_a)^{-1}.
\end{equation}
Replacing $T_a$ by $T_a'$ gives the corresponding local identity with
the same function $\kappa_a(q(g))$. Hence the same pair of tensor product
representations satisfies both the mixed and the block conditions.
\end{proof}

\begin{lemma}\label{r4:14}
We have
\[
 C_{\widehat B_\tau}(\widehat X)/Z=C_{A_\chi}(X).
\]
\end{lemma}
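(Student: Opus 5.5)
The plan is to compare the two centralizers through the surjection $\pi:\widehat B\to B$, whose kernel $Z$ is central. There are three steps.

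First, I would show that $\pi$ maps $C_{\widehat B_\tau}(\widehat X)$ into $C_{A_\chi}(X)$ and that $Z\leq C_{\widehat B_\tau}(\widehat X)$.
\begin{itemize}
\item Recall that $\widehat B_\tau=\pi^{-1}(A_\chi)$, as shown in the proof of Proposition~\ref{r4:13}.
\item Since $Z\leq Z(\widehat B)$ and $Z\leq\widehat B_\tau$, the subgroup $Z$ lies in $C_{\widehat B_\tau}(\widehat X)$.
\item If $c$ centralizes $\widehat X=\pi^{-1}(X)$, then $\pi(c)$ centralizes $\pi(\widehat X)=X$.
\item Also $\pi(c)\in A_\chi$.
\end{itemize}
Hence $\pi$ induces an injective map $C_{\widehat B_\tau}(\widehat X)/Z\to C_{A_\chi}(X)$, and it remains to prove surjectivity.

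Second, I would take $y\in C_{A_\chi}(X)$ and choose a preimage $c=(y,1)\in\widehat B_\tau$. For $x\in\widehat X$, the commutator $[c,x]$ lies in $\ker\pi=Z$. Since $Z$ is central, the map $x\mapsto[c,x]$ is a homomorphism $\widehat X\to Z$, and $Z$ lies in its kernel, so it factors through $X$. The task is to show that this homomorphism is trivial; equivalently, in the explicit multiplication \eqref{r4:eq:3}, that $\alpha(y,g)=\alpha(g,y)$ for all $g\in X$.

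Third, to kill the commutator I would use the representation $\mathcal L$.
\begin{itemize}
\item Since $y$ centralizes $K$, the matrix $\mathcal P(y)$ commutes with the irreducible $\mathcal P(K)$. Because \eqref{r4:eq:1} gives $\mathcal P(yk)=\mathcal P(y)\mathcal P(k)$, Schur's lemma shows that $\mathcal P(y)$ is a scalar matrix $\lambda I$.
\item Therefore $\mathcal L(c)=\lambda I$ commutes with $\mathcal L(x)$ for every $x$.
\item This gives $\mathcal L([c,x])=I$, that is, $\nu([c,x])=1$.
\item Since $\nu$ is faithful on $Z$, it follows that $[c,x]=1$.
\end{itemize}
So $c\in C_{\widehat B_\tau}(\widehat X)$, which proves surjectivity and the stated equality.

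The main obstacle is the scalar claim in the third step. It needs $y\in B=A_\zeta$ together with the normalization in \eqref{r4:eq:1}, which says $\alpha$ is inflated from $B/K$ and trivial when one argument lies in $K$. This is what makes $\mathcal P(y)$ intertwine $\mathcal P_K$ with itself. If the normalization alone does not suffice, I would instead compare $\mathcal P(y)\mathcal P(k)$ with $\mathcal P(k)\mathcal P(y)$ directly, using $yk=ky$ and $\alpha(y,k)=\alpha(k,y)=1$.
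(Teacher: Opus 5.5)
Your proposal is correct and follows essentially the same route as the paper: lift $y\in C_{A_\chi}(X)$ to $\widehat B_\tau$, use Schur's lemma on $K$ to make $\mathcal L$ of the lift scalar, then kill the commutators $[c,x]\in Z$ by faithfulness of $\mathcal L|_Z$. The only cosmetic difference is that the paper obtains that the lift centralizes $K_0$ from $[\widehat c,K_0]\leq K_0\cap Z=1$, whereas you read off $\mathcal P(y)\mathcal P(k)=\mathcal P(k)\mathcal P(y)$ directly from the normalization $\alpha(y,k)=\alpha(k,y)=1$ in \eqref{r4:eq:1}; the two are equivalent.
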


\begin{proof}
The quotient homomorphism gives one inclusion. For the converse, let
$c\in C_{A_\chi}(X)$ and choose a lift $\widehat c\in\widehat B_\tau$.
Since $K_0\unlhd\widehat B$ and $K_0\cap Z=1$, centralization of $K$
by $c$ implies centralization of $K_0$ by $\widehat c$. Schur's lemma
then implies that $\mathcal L(\widehat c)$ is scalar. For $x\in\widehat X$, the
commutator $[\widehat c,x]$ lies in $Z$ and has identity image under
$\mathcal L$. The faithfulness of $\mathcal L|_Z$ gives
$[\widehat c,x]=1$. The restriction of $\pi$ to this centralizer has kernel $Z$, which
proves the assertion.
\end{proof}

\begin{theorem}\label{r4:15}
For the pair $\chi\leftrightarrow\chi'$ constructed above,
\[
 (A_{\chi^{\mathcal H}},X,\chi)_{\mathcal H}
 \geq_b(E_{(\chi')^{\mathcal H}},Y,\chi')_{\mathcal H}.
\]
The mixed comparison functions and all intermediate-group block
correspondences are afforded by a single pair of projective representations.
\end{theorem}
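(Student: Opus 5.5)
The relation will be checked clause by clause against Definition~\ref{red:block}, with $G=A_{\chi^{\mathcal H}}$, $N=X$, $H=E_{(\chi')^{\mathcal H}}$ and $M=Y=E\cap X$. The strategy is to build the required pair of projective representations on $A_\chi$ and $E_{\chi'}$ by descending the pair $(\widehat{\mathcal P},\widehat{\mathcal P}')$ of Proposition~\ref{r4:13} along $\pi$.

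\emph{Group-theoretic conditions.} I will show $G=XH$ by a Frattini argument. Since $\chi_K$ is homogeneous over $\zeta$, each $g\in G$ maps $\zeta$ into $\zeta^{\mathcal H}$, so $g\in A_0$. The defect groups of $\bl(\chi)$ are conjugate in $X$, so we may write $g=xc$ with $x\in X$ and $c\in N_{A_0}(D)\leq E$. By \eqref{r4:eq:39} and $\mathcal H$-equivariance of $\Upsilon_\zeta$ on orbits, $c$ fixes $(\chi')^{\mathcal H}$. Thus $G=XH$, and $G\cap E=H$.

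For $C_G(X)\leq H$, note that a centralizing element fixes $\chi$ and normalizes $D$, so it lies in $N_{A_0}(D)$ and fixes $\chi'$. Equality of the mixed stabilizers $(H\times\mathcal H)_\chi=(H\times\mathcal H)_{\chi'}$ is exactly \eqref{r4:eq:39} restricted to $H\leq E$. The common defect group $D$ and the inclusion $N_X(D)\leq Y$ are immediate.

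\emph{Descent of the representations.} First, $Z$ lies in the kernel of $\tau$ and of $\tau'$. Since $\mathcal L(z)=\nu(z)I$ and the quotient factor $\mathcal R(q(z))$ is the scalar $\nu(z)^{-1}$ (this is the central fibre \eqref{r4:eq:34}), the matrices $\widehat{\mathcal P}(z)$ and $\widehat{\mathcal P}'(z)$ are the identity for $z\in Z$. Hence, after choosing a section $A_\chi\to\widehat B_\tau$, the maps $\widehat{\mathcal P},\widehat{\mathcal P}'$ descend to projective representations $\mathcal P,\mathcal P'$ of $A_\chi=\pi(\widehat B_\tau)$ and $E_{\chi'}=\pi(\widehat E_{\tau'})$. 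These are associated with $\chi$ and $\chi'$, have a common factor set, and by normalizing constants we arrange root-of-unity values.

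For the central scalars, Lemma~\ref{r4:14} identifies $C_{A_\chi}(X)$ with $C_{\widehat B_\tau}(\widehat X)/Z$, so the equal central scalars from Proposition~\ref{r4:13} transfer. For the comparison functions, take $a=(h,\sigma)$. The mixed action $f_a\sigma$ on $\widehat B$ covers the action of $a$ on $B$. Comparing \eqref{r4:eq:41} with \eqref{r4:eq:2} shows that the descended comparison functions on the two sides are both $\kappa_a\circ q$, corrected by the same normalization factor, and so they agree.

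\emph{Block conditions.} Intermediate groups $X\leq J\leq A_\chi$ correspond to $\widehat J=\pi^{-1}(J)$. The tensor correspondence for $(\mathcal P,\mathcal P')$ on $J$ is the descent of the one for $(\widehat{\mathcal P},\widehat{\mathcal P}')$ on $\widehat J$, restricted to the characters with $Z$ in their kernel. Block induction commutes with passing to the central quotient by $Z$ on characters with $Z$ in the kernel; this is the same central-quotient block correspondence used in Lemma~\ref{r6:2}. So the intermediate block conditions follow from Proposition~\ref{r4:13}.

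Finally, we must pass from $A_\chi$ to $A_{\chi^{\mathcal H}}$. Definition~\ref{red:block} only requires the representations on the stabilizers $G_\chi=A_\chi$ and $H_{\chi'}=E_{\chi'}$, so nothing further is needed. The Galois elements of $(A_{\chi^{\mathcal H}}\times\mathcal H)_\chi$ lie in $(E\times\mathcal H)_\chi$ modulo $X$, and that case has already been covered.

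I expect the main obstacle to be normalizing the comparison functions during descent. The normalized $\mu_a$ must equal $1$ on $X$ and be constant on $X$-cosets, whereas \eqref{r4:eq:41} is only stated on $\widehat B_\tau$ with the inflated section. I would handle this by checking that the scalar relating $f_a$ applied to a chosen lift with the lift of $hgh^{-1}$ lies in $Z$, where both $\widehat{\mathcal P}$ and $\widehat{\mathcal P}'$ are the identity. The same correction then applies to both sides.
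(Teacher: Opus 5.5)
Everything except the block conditions matches the paper's proof. This covers the Frattini argument giving $A_{\chi^{\mathcal H}}=XE_{(\chi')^{\mathcal H}}$ with $X\cap E_{(\chi')^{\mathcal H}}=Y$, and the centralizer and mixed-stabilizer conditions from \eqref{r4:eq:39}. It also covers the descent of $(\widehat{\mathcal P},\widehat{\mathcal P}')$. Your check that both are the identity on $Z$, via \eqref{r4:eq:34} and Lemma~\ref{r4:12}, is what makes them constant on $Z$-cosets, so no section is needed. Lemma~\ref{r4:14} handles the central scalars, and $\kappa_a$ descends as you describe. The normalization you worry about is automatic: $\kappa_a\circ q$ is already $1$ on $\widehat X$ and constant on $\widehat X$-cosets, and it is well defined on $Z$-cosets because $q(Z)\leq V$.

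The gap is the block condition. You claim that block induction commutes with passing to the quotient by the central subgroup $Z$, and you attribute this to Lemma~\ref{r6:2}. That lemma uses central block correspondences only to identify defect groups. Moreover, the general statement is false once $p$ divides $|Z|$, which can happen here, since $|\widehat D|=|Z|_p|D|$.

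Here is a counterexample. Take $p=2$, the group $\mathrm{SL}_2(3)$, and let $Z$ be its centre, viewed as a subgroup. Both $1_Z$ and $1_{\mathrm{SL}_2(3)}$ have $Z$ in their kernels. Every non-central class of $\mathrm{SL}_2(3)$ has even length, so $\bl(1_Z)^{\mathrm{SL}_2(3)}=\bl(1_{\mathrm{SL}_2(3)})$ is defined. In the quotient $A_4$, however, the involution class has length $3$. Since $A_4$ has a single $2$-block, the block of the trivial subgroup has no induced block in $A_4$.

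The underlying reason is as follows. Suppose $\widehat x$ lies over $x$, and let $Z_0=\{z\in Z:\widehat xz \text{ is conjugate to } \widehat x\}$. Then central characters upstairs equal $|Z_0|^*$ times those downstairs. When $p\mid|Z_0|$, the upstairs identity therefore says nothing about the quotient. For the $p'$-part of $Z$ your argument would work.

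The missing idea is to descend the normalized trace condition of \cite[Theorem~4.4]{NS14} instead of the block identities.
\begin{itemize}
\item Take a $p'$-element $x$ of the local inertia group with $D\in\Syl_p(C_X(x))$, and choose a $p'$-lift $\widehat x$.
\item By coprime action, $\widehat x$ centralizes $\widehat D$.
\item Since $|C_{\widehat X}(\widehat x)|_p\leq|Z|_p|C_X(x)|_p=|\widehat D|$, we get $\widehat D\in\Syl_p(C_{\widehat X}(\widehat x))$.
\item Apply the trace identity for $(\widehat{\mathcal P},\widehat{\mathcal P}')$ at $\widehat x$. Using $\widehat{\mathcal P}(\widehat x)=\mathcal P(x)$, $\widehat{\mathcal P}'(\widehat x)=\mathcal P'(x)$, and cancelling $|Z|_{p'}^*$, this gives the identity for $(\mathcal P,\mathcal P')$.
\end{itemize}
Then \cite[Theorem~4.4]{NS14}, applied to the descended pair, yields both definedness and $\bl(\tau_W(\omega))^W=\bl(\omega)$ for every $X\leq W\leq A_\chi$.
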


\begin{proof}
We first descend the pair of projective representations. Both $\tau$ and $\tau'$ have $Z$ in their
kernels, and $Z\leq\widehat X\cap\widehat Y$. Since the associated
factor sets are inflated from the quotients by $\widehat X$ and
$\widehat Y$, respectively,
\[
 \widehat{\mathcal P}(zg)=\widehat{\mathcal P}(g),\qquad
 \widehat{\mathcal P}'(zg)=\widehat{\mathcal P}'(g)
 \qquad(z\in Z).
\]
We may therefore define
\[
 \mathcal P(\pi(g))=\widehat{\mathcal P}(g),\qquad
 \mathcal P'(\pi(g))=\widehat{\mathcal P}'(g).
\]
These representations are associated with $\chi$ and $\chi'$,
respectively, and have a common factor set. Lemma~\ref{r4:14} shows that every element of the quotient
centralizer lifts to the centralizer before passing to the quotient. Hence equality of the
central scalars is preserved.

The proof of \cite[Corollary~4.5]{NS14} gives the intermediate-group
block conditions for this pair of quotient representations. We include
the trace argument to verify these conditions for the specified pair. Let $x$ be a local $p'$-test
element in the quotient. Choose a $p'$-lift $\widehat x$ in its central
preimage. Since $\widehat DZ/Z=D$, centrality of $Z$ and coprime action
imply that $\widehat x$ centralizes $\widehat D$. The quotient map gives
\[
 |C_{\widehat X}(\widehat x)|_p
 \leq |Z|_p\,|C_X(x)|_p
 =|Z|_p\,|D|=|\widehat D|.
\]
Together with $\widehat D\leq C_{\widehat X}(\widehat x)$, this yields
$\widehat D\in\Syl_p(C_{\widehat X}(\widehat x))$. Apply the
normalized trace identity for $\widehat{\mathcal P},\widehat{\mathcal P}'$
at $\widehat x$. Since
$\widehat{\mathcal P}(\widehat x)=\mathcal P(x)$ and
$\widehat{\mathcal P}'(\widehat x)=\mathcal P'(x)$, the common unit
$|Z|_{p'}$ cancels. The result is precisely the normalized trace identity
for $\mathcal P,\mathcal P'$. Applying \cite[Theorem~4.4]{NS14} to
these quotient representations gives
\begin{equation}\label{r4:eq:42}
 \bl(\tau_W(\omega))^W=\bl(\omega)
 \qquad\bigl(X\leq W\leq A_\chi,\ \omega\in\Irr(W\mid\chi)\bigr).
\end{equation}
The tensor correspondences also descend: any projective
representation used to cancel the factor set is inflated from $W/X$,
and hence is trivial on the central kernel. The common defect group
descends to $D$, and $N_X(D)\leq Y$.

We next descend the comparison functions for the same pair. The automorphism
$f_a$ stabilizes $Z$ and induces conjugation by $h$ on the quotient
under $\pi$. Thus \eqref{r4:eq:41} gives
\[
 \mathcal P(hxh^{-1})^\sigma
 =\mu_a(x)(U_a\otimes T_a)\mathcal P(x)(U_a\otimes T_a)^{-1},
\]
with the same $\mu_a$ on the local side. The formula
$\mu_a(\pi(g))=\kappa_a(q(g))$ is well-defined because $\kappa_a$ is
constant on $V$-cosets and $q(Z)\leq V$. Consequently, the quotient
comparison functions satisfy $\mu_a'=\mu_a|_{E_{\chi'}}$.

It remains to identify the stabilizers of the Galois orbits. Set
\[
 S=A_{\chi^{\mathcal H}},\qquad T=E_{(\chi')^{\mathcal H}}.
\]
Since $\chi_K$ is a multiple of $\zeta$, we have $S\leq A_0$.
Equation~\eqref{r4:eq:39} shows that, for $h\in E$, the condition
$\chi^h\in\chi^{\mathcal H}$ is equivalent to
$(\chi')^h\in(\chi')^{\mathcal H}$. Hence $T=E\cap S$. If $g\in S$,
then $\bl(\chi)^g$ is a Galois conjugate of $\bl(\chi)$. Both $D^g$
and $D$ are defect groups of that block, so they are conjugate in $X$.
Thus $S=XN_S(D)$. Since $N_S(D)\leq N_{A_0}(D)\leq E$,
\[
 S=XT,\qquad X\cap T=X\cap E=Y.
\]
Furthermore,
\[
 C_S(X)\leq C_S(D)\leq N_S(D)\leq T.
\]
Restricting \eqref{r4:eq:39} to $T\times\mathcal H$ gives the required
equality of mixed stabilizers. Finally, $S_\chi=A_\chi$ and
$T_{\chi'}=E_{\chi'}$, so the representations constructed above are
defined on precisely the required inertia groups. Their common factor set,
equal central scalars, equal mixed comparison functions, common defect
group $D$, normalizer containment, and \eqref{r4:eq:42} prove the full
block $\mathcal H$-triple relation.
\end{proof}

\subsection{The semilinear descent theorem}\label{r4:sub:6}

We now assemble the correspondences on individual fibers over the
Galois orbit to obtain the descent theorem.

\begin{theorem}[Semilinear centralization]\label{r4:16}
Assume Hypothesis~\ref{r4:1}. Let
\[
 K\unlhd A,\qquad K\leq X\unlhd A,\qquad D\leq X,
\]
where $D$ is a $p$-subgroup. Set $D_0=K\cap D$, and suppose that
\[
 \zeta\in\Irr_0(K\mid D_0),\qquad
 \zeta\text{ is }X\text{-invariant},\qquad |X:K|<n.
\]
Set
\[
 A_0=A_{\zeta^{\mathcal H}},\qquad C=N_{A_0}(D),\qquad
 Y=KN_X(D),\qquad E=KC.
\]
There is a $C\times\mathcal H$-equivariant bijection
\begin{equation}\label{r4:eq:43}
 \Upsilon:\Irr_0(X\mid D,\zeta^{\mathcal H})
 \longrightarrow\Irr_0(Y\mid D,\zeta^{\mathcal H})
\end{equation}
with the following properties.
\begin{enumerate}[label=\textup{(\roman*)}]
\item For every $\gamma\in\mathcal H$, the $\zeta^\gamma$-fiber is
mapped bijectively onto the corresponding $\zeta^\gamma$-fiber.
\item If $\chi'=\Upsilon(\chi)$, then
\[
 \bl(\chi')^X=\bl(\chi),\qquad
 \frac{\chi(1)_p}{\chi'(1)_p}=|X:Y|_p,
\]
and $D$ is a common defect group of the two blocks.
\item For every corresponding pair,
\begin{equation}\label{r4:eq:44}
 (A_{\chi^{\mathcal H}},X,\chi)_{\mathcal H}
 \geq_b(E_{(\chi')^{\mathcal H}},Y,\chi')_{\mathcal H}.
\end{equation}
\end{enumerate}
\end{theorem}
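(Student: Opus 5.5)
The plan is to assemble Theorem~\ref{r4:16} from the fiberwise construction of Subsections~\ref{r4:sub:3}--\ref{r4:sub:5}, which already produces, for the fixed $X$-invariant character $\zeta$, a $(C\times\mathcal H)_\zeta$-equivariant bijection $\Upsilon_\zeta$ in \eqref{r4:eq:35} together with the relation of Theorem~\ref{r4:15}. First I note that $\mathcal H$ preserves height zero and the specified defect subgroup $D_0$ of $K$. Since $\zeta$ is $X$-invariant, every Galois conjugate $\zeta^\gamma$ is also $X$-invariant and lies in $\Irr_0(K\mid D_0)$. The $C\times\mathcal H$-orbit of $\zeta$ is exactly $\zeta^{\mathcal H}$: $C\leq A_0$ moves $\zeta$ inside $\zeta^{\mathcal H}$, and $\mathcal H$ acts transitively on it. The domain and codomain of \eqref{r4:eq:43} are therefore disjoint unions of the fibers over $\zeta^\gamma$. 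The stabilizer of the fiber over $\zeta$ in $C\times\mathcal H$ is $(C\times\mathcal H)_\zeta$, because fibers over distinct $X$-invariant characters of $K$ are disjoint.

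I then define $\Upsilon$ by transport of structure. For $a\in C\times\mathcal H$ and $\chi$ in the $\zeta^a$-fiber, set $\Upsilon(\chi)=\Upsilon_\zeta(\chi^{a^{-1}})^a$. This is well defined precisely because $\Upsilon_\zeta$ is equivariant under the full stabilizer $(C\times\mathcal H)_\zeta$. The map is bijective fiber by fiber, which gives (i), and it is $C\times\mathcal H$-equivariant by construction. Here $\mathcal H$ is abelian, and $C$ normalizes $Y$ and $D$, since $C\leq N_{A_0}(D)$ normalizes both $K$ and $N_X(D)$.

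For (ii), on the $\zeta$-fiber the block identity $\bl(\chi')^X=\bl(\chi)$ and the common defect group $D$ come from Theorem~\ref{r4:15}, namely condition (iv) with $W=X$. Both characters have height zero with defect group $D$, so
\[
\chi(1)_p=|X:D|_p,\qquad \chi'(1)_p=|Y:D|_p,
\]
and their ratio is $|X:Y|_p$. Conjugation by $c\in C$ and Galois action commute with block induction and fix $D$, with \cite[Lemma~1.4]{ChenCT} handling the Galois part. Hence (ii) transports to every fiber.

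For (iii), Theorem~\ref{r4:15} gives \eqref{r4:eq:44} on the $\zeta$-fiber. I must then check that the relation $\geq_b$ is invariant under simultaneous application of $(c,\sigma)\in C\times\mathcal H$ to both triples. The ambient groups $A_{\chi^{\mathcal H}}$ and $E_{(\chi')^{\mathcal H}}$ are conjugated by $c$ and unchanged by $\sigma$. The new pair of projective representations is $\mathcal P^{c\sigma}$ and $\mathcal P'^{c\sigma}$, obtained by conjugating and then applying $\sigma$ entrywise. Under this operation:
\begin{itemize}
\item the factor sets become $\alpha^{c\sigma}$ on both sides, so they still agree;
\item the central scalars transform identically on both sides;
\item the comparison functions transform by the cocycle rule \eqref{r4:eq:10} on both sides, so they remain equal;
\item the block conditions on intermediate groups transport because conjugation and Galois action commute with Brauer's block induction.
\end{itemize}

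The step I expect to be the main obstacle is this last transport of the full $\geq_b$ relation, and in particular the bookkeeping showing that the transported comparison functions still agree under the mixed stabilizer of the transported character. I would handle it exactly as in the proof of \eqref{r4:eq:10}: compute $\mu$ for the representation $\mathcal P^{b}$ in terms of $\mu$ for $\mathcal P$, and observe that the formula is the same on both sides.
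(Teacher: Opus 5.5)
Your proposal is correct and follows essentially the same route as the paper: you extend the $(C\times\mathcal H)_\zeta$-equivariant fiber map $\Upsilon_\zeta$ and the relation of Theorem~\ref{r4:15} to the whole orbit $\zeta^{\mathcal H}$ by transport of structure, and you read off (ii) from the case $W=X$ together with the height-zero degree formula. The only difference is that the paper transports along $\mathcal H$ alone, which already acts transitively on $\zeta^{\mathcal H}$, and then obtains $C$-equivariance from an element $(c,\gamma^{-1})\in(C\times\mathcal H)_\zeta$, whereas you transport along all of $C\times\mathcal H$; this is a cosmetic variation.
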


We have $X\leq A_0$, $Y\unlhd E$, and $X\cap E=Y$.
Neither $Y\unlhd X$ nor $A_0=XE$ is assumed. The required factorization
holds in the overgroups stabilizing the relevant Galois orbits. The degree formula concerns
the relative $p$-parts; it does not assert an equality of the full degrees.

\begin{proof}[Proof of Theorem~\ref{r4:16}]
For $\chi\in\Irr_0(X\mid D,\zeta)$ and $\sigma\in\mathcal H$, define
\begin{equation}\label{r4:eq:45}
 \Upsilon(\chi^\sigma)=\Upsilon_\zeta(\chi)^\sigma.
\end{equation}
To see that this is well-defined, suppose that
$\chi_1^{\sigma_1}=\chi_2^{\sigma_2}$. Uniqueness of the constituent
of the restriction to $K$ gives
$\zeta^{\sigma_1}=\zeta^{\sigma_2}$. Thus
$\delta=\sigma_1\sigma_2^{-1}\in\mathcal H_\zeta$ and
$\chi_2=\chi_1^\delta$. Equivariance of \eqref{r4:eq:35} under
$(1,\delta)$ shows that the two values in \eqref{r4:eq:45} agree.
The bijections on individual fibers therefore combine to give a
bijection on their union, preserving every $\zeta^\sigma$-fiber.

For $c\in C$, choose $\gamma\in\mathcal H$ with $\zeta^c=\zeta^\gamma$.
Then $(c,\gamma^{-1})\in(C\times\mathcal H)_\zeta$, and equivariance
of \eqref{r4:eq:35} gives
\[
 \Upsilon_\zeta(\chi^{c\gamma^{-1}})
 =\Upsilon_\zeta(\chi)^{c\gamma^{-1}}.
\]
Together with \eqref{r4:eq:45}, this gives
$\Upsilon(\chi^c)=\Upsilon(\chi)^c$. Since the group and Galois
actions commute, $\Upsilon$ is $C\times\mathcal H$-equivariant.

The pair constructed above satisfies all the required properties on the
$\zeta$-fiber. For the other fibers, apply $\sigma$ simultaneously to
both projective representations. Equalities of factor sets, central scalars, and
comparison functions are preserved, as are block induction, defect
groups, and heights. The full relation therefore holds on every
$\zeta^\sigma$-fiber. The bijection must be independent of the choice of fiber representative;
the projective representations realizing the relation for a given pair
of characters need not be unique.

Taking $W=X$ in the strong relation gives
\[
 \bl(\Upsilon(\chi))^X=\bl(\chi).
\]
Since both characters have height zero and their blocks have defect
group $D$,
\[
 \chi(1)_p=\frac{|X|_p}{|D|},\qquad
 \Upsilon(\chi)(1)_p=\frac{|Y|_p}{|D|}.
\]
Taking the ratio proves the degree formula and completes the proof.
\end{proof}

\begin{remark}\label{r4:17}
The strict inequality in \eqref{r4:eq:19} ensures that the quotient lies
below the external induction bound $n$, while \eqref{r4:eq:21} applies
Hypothesis~\ref{r4:1}. The theorem therefore constitutes a complete
step in the central-index induction. Its application in the full
reduction requires both the strong relation and the quantification over
all finite overgroups in that hypothesis.
\end{remark}

The following formulation expresses the induction bound in the notation
used in the reduction. The bound is the external integer $n$, rather
than the central index of the intermediate group $T$.

\begin{proposition}\label{r4:18}
Assume Hypothesis~\ref{r4:1} for a positive integer $n$.
Let $K\leq T$ be normal subgroups of $B$,
let $D\leq T$ be a $p$-subgroup, and assume $|T:K|<n$. If
$\zeta\in\Irr_0(K\mid K\cap D)$ is $T$-invariant, there is an
$N_{B_{\zeta^{\mathcal H}}}(D)\times\mathcal H$-equivariant strong
block bijection
\[
 \Irr_0(T\mid D,\zeta^{\mathcal H})
 \longrightarrow\Irr_0(KN_T(D)\mid D,\zeta^{\mathcal H}).
\]
It preserves the fibers over the characters of $K$. For a corresponding
pair $\gamma,\gamma'$, the local overgroup in the strong relation is
$\bigl(KN_{B_{\zeta^{\mathcal H}}}(D)\bigr)_{(\gamma')^{\mathcal H}}$.
\end{proposition}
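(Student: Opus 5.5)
This is a direct specialization of Theorem~\ref{r4:16}, so the plan is to apply that theorem with the substitutions
\[
 A:=B,\qquad X:=T,\qquad K:=K,\qquad D:=D .
\]
First I check the hypotheses. We have $K\unlhd B$, $K\leq T\unlhd B$ and $D\leq T$. Put $D_0=K\cap D$; then $\zeta\in\Irr_0(K\mid D_0)$ is $T$-invariant and $|T:K|<n$. Hypothesis~\ref{r4:1} is assumed for this same external integer $n$, so every hypothesis of Theorem~\ref{r4:16} holds verbatim.

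Next I translate the notation of Theorem~\ref{r4:16} into the present setting:
\[
 A_0=B_{\zeta^{\mathcal H}},\qquad C=N_{B_{\zeta^{\mathcal H}}}(D),\qquad Y=KN_T(D),\qquad E=KN_{B_{\zeta^{\mathcal H}}}(D).
\]
With these identifications the conclusions transfer as follows.
\begin{enumerate}
\item The bijection \eqref{r4:eq:43} is exactly the displayed map from $\Irr_0(T\mid D,\zeta^{\mathcal H})$ to $\Irr_0(KN_T(D)\mid D,\zeta^{\mathcal H})$, and it is $C\times\mathcal H$-equivariant, as required.
\item Part~(i) of Theorem~\ref{r4:16} gives preservation of each $\zeta^\gamma$-fiber. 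These are precisely the fibers over the characters of $K$ that can occur, since any character in the domain lies over some Galois conjugate of $\zeta$.
\item Part~(ii) gives compatibility with block induction.
\item Part~(iii) gives the strong relation \eqref{r4:eq:44} with local overgroup $E_{(\gamma')^{\mathcal H}}=\bigl(KN_{B_{\zeta^{\mathcal H}}}(D)\bigr)_{(\gamma')^{\mathcal H}}$, which is exactly the overgroup claimed.
\end{enumerate}

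The only point I expect to need care is making sure nothing in Theorem~\ref{r4:16} silently used a central-index bound on $T$ itself rather than on the quotient $T/K$. The only place the bound enters is \eqref{r4:eq:19}, which bounds $m(V)$ by $|X:K|$ for the auxiliary group $V$ built from $\widehat X$ and $K_0$. That inequality holds for any $T$ with $|T:K|<n$, regardless of $m(T)$, so the specialization is valid. No further construction is needed, and the proposition follows.
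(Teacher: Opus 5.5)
Your proposal is correct and follows the paper's proof: the paper also applies Theorem~\ref{r4:16} with $A=B$ and $X=T$, which gives $C=N_{B_{\zeta^{\mathcal H}}}(D)$, $Y=KN_T(D)$ and $E=KN_{B_{\zeta^{\mathcal H}}}(D)$. Your check that the induction bound enters only through \eqref{r4:eq:19}, that is, through $|T:K|<n$ and not through $m(T)$, is exactly the point the paper's proof emphasizes.
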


\begin{proof}
Apply Theorem~\ref{r4:16} with $X=T$. The associated projective
representation gives a finite central extension. Write $K_0$ for the
embedded copy of $K$, $\widehat T$ for the inverse image of $T$, and
$Z_1$ for the central scalar subgroup. Induction is applied only to the base group $W=\widehat T/K_0$, for
which
\[
 m(W)\leq |W:Z_1K_0/K_0|=|T:K|<n.
\]
The action constructed in Lemma~\ref{r4:2} is realized in a finite
semidirect product containing the original quotient overgroup. This is
therefore an overgroup covered by the induction hypothesis.

The remaining steps consist of restricting this correspondence to
central character fibers, taking tensor products of representations,
and descending through the kernel. They do not require the additional inequality $|T:K|<m(T)$.
The calculation of comparison functions above gives the strong relation,
and the height-zero argument in both directions gives surjectivity. This
proves the proposition.
\end{proof}

\section{Quasisimple components}\label{sec:components}

In this section we derive the consequences of Definition~\ref{r6:1}
for normal subgroups whose central quotients are direct products of
non-abelian simple groups. Starting with universal covering groups, we
form tensor products, pass to central quotients, and then transfer the
resulting relations to arbitrary ambient groups. At each stage, we verify
the block conditions and the mixed comparison functions for the same
pair of projective representations.

\subsection{Universal covers and quasisimple quotients}\label{r6:sub:3}

Passing from a universal covering group to a quasisimple quotient
requires us to transport the projective representations and track
the defect groups under the quotient map. The kernel of a central
character is normal in the relevant ambient group, but need not be
central in that group.

\begin{lemma}[Inflation through a normal kernel]\label{r6:3}
Let $K\trianglelefteq A$, let $H\leq A$, and set $M=K\cap H$.
Suppose that $T\trianglelefteq A$ and $T\leq M$, and that
\[
 (A/T,K/T,\bar\theta)_{\mathcal H}
 \geq_b(H/T,M/T,\bar\varphi)_{\mathcal H}.
\]
If the inflations $\theta$ and $\varphi$ have height zero and their
blocks have a common defect group $D$, then
\[
 (A,K,\theta)_{\mathcal H}\geq_b(H,M,\varphi)_{\mathcal H}.
\]
No assumption that $T\leq Z(A)$ is required.
\end{lemma}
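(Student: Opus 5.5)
The plan is to take the pair $(\bar{\mathcal P},\bar{\mathcal P}')$ that affords the quotient relation and inflate it along $\pi:A\to A/T$. Set
\[
 \mathcal P=\bar{\mathcal P}\circ\pi|_{A_\theta},\qquad
 \mathcal P'=\bar{\mathcal P}'\circ\pi|_{H_\varphi}.
\]
Then I would check conditions (i)--(iv) of Definition~\ref{red:block} for this same pair. Since $T\leq M\leq H$ and $T\leq K$, inflation identifies the inertia groups: $A_\theta=\pi^{-1}((A/T)_{\bar\theta})$ and $H_\varphi=\pi^{-1}((H/T)_{\bar\varphi})$. The same holds for the mixed stabilizers, because inflation commutes with both the conjugation and the Galois actions. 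Thus $(H\times\mathcal H)_\theta=(H\times\mathcal H)_\varphi$ follows from the quotient equality. The factorization $A=KH$ is the preimage of $A/T=(K/T)(H/T)$. If $c\in C_A(K)$, then $cT$ centralizes $K/T$, so $cT\in H/T$ and hence $c\in H$. This gives (i) without any centrality of $T$.

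For (ii) and (iii) everything is transported directly. The factor sets of $\mathcal P,\mathcal P'$ are inflations of the common quotient factor set, and the restrictions to $K$ and $M$ are ordinary and afford $\theta,\varphi$. For $c\in C_A(K)$, the matrices $\mathcal P(c)=\bar{\mathcal P}(cT)$ and $\mathcal P'(c)=\bar{\mathcal P}'(cT)$ are scalar with equal scalars, because $cT\in C_{A/T}(K/T)$. Let $a=(h,\sigma)$ lie in the mixed stabilizer, and let $\bar\mu_a$ be the quotient comparison function with intertwiner $T_a$. Then
\[
 \mathcal P(hxh^{-1})^\sigma=\bar\mu_a(xT)\,T_a\mathcal P(x)T_a^{-1},
\]
and the same identity holds on the local side. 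So $\mu_a=\bar\mu_a\circ\pi$, and $\mu'_a=\mu_a|_{H_\varphi}$.

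The substantive part is (iv). Let $N_K\leq W\leq A_\theta$, where $N_K$ here means $K$. Every $\xi\in\Irr(W\mid\theta)$ contains $T$ in its kernel, and the tensor correspondence $\tau_W$ is the inflation of the quotient correspondence $\tau_{W/T}$. Block induction is not directly compatible with passage to quotients. I would therefore not argue through domination. Instead I would verify the normalized trace criterion of \cite[Theorem~4.4]{NS14} for the inflated pair, as in the proofs of Lemma~\ref{r4:9} and Theorem~\ref{r4:15}; its hypotheses need $\theta,\varphi$ to have height zero with common defect group $D$, which is assumed. Take a $p'$-element $h\in H$ with $D\in\Syl_p(C_K(h))$. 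I would split into two cases, according to whether $DT/T$ is Sylow in $C_{K/T}(hT)$.

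In the first case, apply the quotient trace identity at $hT$, which is available because $\bar{\mathcal P},\bar{\mathcal P}'$ afford a $\geq_b$ relation. Then multiply by the unit $|T|_{p'}$, using $\theta(1)=\bar\theta(1)$ and $\varphi(1)=\bar\varphi(1)$. In the second case, I would extend along $K\langle h\rangle$ by a root-of-unity projective representation of the cyclic $p'$-quotient. Then I would use \cite[Lemma~4.2(a)]{NS14} and a Min--Max argument, exactly as in the second case of Lemma~\ref{r4:9}, to show that both normalized traces reduce to $0$.

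The step I expect to be the main obstacle is the normalizer containment $N_K(D)\leq M$, together with the relation between $D$ and the defect group $\bar D$ specified in the quotient relation. Any $x\in N_K(D)$ normalizes $DT$. I would first try to show that $DT/T$ is conjugate, inside $K/T$, to a defect group of $\bl(\bar\theta)$. The tools are the height-zero hypothesis, the domination inclusion of defect groups, and the order count $|D|=|K|_p/\theta(1)_p$. Once that holds, the quotient containment $N_{K/T}(DT/T)\leq M/T$ gives $x\in M$.

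If this identification fails in general, I would instead argue with defect classes, as in Lemma~\ref{r4:7}. A $p'$-element $y$ with $D\in\Syl_p(C_K(y))$ and nonzero central form for $\bl(\theta)$ must lie in $M$, and Lemma~\ref{r4:7} would then be used to force $N_K(D)\leq M$.
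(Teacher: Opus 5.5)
Your proposal is correct in substance. For (i)--(iii) it matches the paper, and you are more explicit about (i). For condition (iv) it takes a different route.

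The paper observes that every character over $\theta$ on an intermediate group $W$ has $T$ in its kernel. The tensor correspondences of the inflated pair are therefore inflations of the quotient ones. The block-induction equalities then follow from the compatibility of block induction with inflation in \cite[Proposition~2.4(a) and Lemma~3.12]{NS14}. The only extra input this needs is that $DT/T$ is a common defect group of $\bl(\bar\theta)$ and $\bl(\bar\varphi)$.

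You instead re-verify the trace criterion of \cite[Theorem~4.4]{NS14} element by element. This works. Your two-case argument is essentially Lemma~\ref{r4:9} with $\xi$ the trivial character of $A_\theta$ and with $T$ playing the role of the normal subgroup called $K$ there, so you could cite that lemma instead of redoing it. The paper's route is shorter. Yours uses only the trace criterion, the tool used throughout Sections~\ref{sec:centralization}--\ref{sec:verification}, and avoids the domination lemma. Both routes hinge on the step you flag as the main obstacle, which the paper simply asserts from the height-zero hypothesis.

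Your sketch of that step needs one more input. Domination gives a defect group $\bar D\le DT/T$ of $\bl(\bar\theta)$, and the degree bound gives $|\bar D|\ge|D|/|T|_p$. But $|DT/T|=|D|/|D\cap T|$, so you must also know that $D\cap T\in\Syl_p(T)$. This holds because $T\le\ker\theta$, so $\bl(\theta)$ covers only the principal block of $T$, and the defect-intersection theorem makes $D\cap T$ a defect group of that block. The same fact is needed in your Case~2 order count, to get $|D|=|T|_p\,|DT/T|<|T|_p\,|C_{K/T}(hT)|_p$.

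Run the argument in $M$ as well, so that $DT/T$ is also a defect group of $\bl(\bar\varphi)$. It is then $M/T$-conjugate to the common defect group specified in the quotient relation, and only then does the quotient containment transfer to $N_{K/T}(DT/T)\le M/T$.

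Your fallback via defect classes and Lemma~\ref{r4:7} would not work, because that lemma assumes the normalizer containment rather than proving it. It is not needed, since the primary identification goes through.
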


\begin{proof}
Let $q$ denote the quotient map, and choose
$\bar{\mathcal P},\bar{\mathcal P}'$ affording the relation on the
quotient. Set
$\mathcal P=\bar{\mathcal P}\circ q$ and
$\mathcal P'=\bar{\mathcal P}'\circ q$. The factor sets, central
scalars, and mixed comparison functions inflate simultaneously; in
particular, $\mu_a=\bar\mu_{\bar a}\circ q$.

Every character lying over $\theta$ in an intermediate subgroup has
$T$ in its kernel. The tensor correspondences afforded by
$\mathcal P,\mathcal P'$ are therefore obtained by inflating those afforded by
$\bar{\mathcal P},\bar{\mathcal P}'$. The block induction equalities
follow from \cite[Proposition~2.4(a) and Lemma~3.12]{NS14}. Since
$\theta$ and $\varphi$ have height zero, $DT/T$ is a common defect
group of the blocks of $K/T$ and $M/T$ containing $\bar\theta$ and
$\bar\varphi$, respectively. Thus the defect-group hypothesis of
that lemma is satisfied. If
$n\in N_K(D)$, then
\[
 nT\in N_{K/T}(DT/T)\leq M/T,
\]
so $n\in M$. Hence $N_K(D)\leq M$, completing the verification of
the defining conditions after inflation.
\end{proof}

The next lemma supplies the centralizer calculation used in the
central-section trace test when passing from the quotient in
Definition~\ref{r6:1} to products of components.

\begin{lemma}\label{r6:4}
Let $V$ be perfect, let $\eta\in\Irr(V)$ have faithful central
character, and let $R$ be a defect group of $\bl(\eta)$. Let $a$ be
an automorphism of $V$ of finite $p'$-order stabilizing $\eta$. If
\[
 RZ(V)/Z(V)\in\Syl_p(C_{V/Z(V)}(a)),
\]
then $a$ centralizes $R$ and $R\in\Syl_p(C_V(a))$.
\end{lemma}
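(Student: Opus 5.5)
The plan is to use the faithful central character to show that $a$ acts trivially on $Z(V)$, then use the Sylow hypothesis in the quotient to see that $a$ normalizes $R$, and finally apply coprime action on the $p$-group $R$. Write $Z=Z(V)$ and $\lambda$ for the linear character with $\eta_Z=\eta(1)\lambda$.

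\emph{Step 1: $a$ fixes $Z$ pointwise.} Since $a$ stabilizes $\eta$, it stabilizes $\lambda$, so $\lambda(a(z))=\lambda(z)$ for all $z\in Z$. As $\lambda$ is faithful, this gives $a(z)=z$ for every $z\in Z$.

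\emph{Step 2: the structure of $RZ$.} Every defect group of a block of $V$ contains $O_p(Z)=Z_p$, so $Z_p\leq R$. Hence $RZ=R\times Z_{p'}$, since $Z_{p'}$ is central and meets $R$ trivially. Thus $R$ is the unique, hence characteristic, Sylow $p$-subgroup of $RZ$.

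\emph{Step 3: $a$ normalizes $R$.} By hypothesis $RZ/Z\leq C_{V/Z}(a)$. So $a$ fixes every coset $xZ$ with $x\in R$, and therefore $a(RZ)=RZ$. By Step 2, $a(R)=R$. Moreover $a(x)x^{-1}\in R\cap Z=Z_p$ for $x\in R$, so $[R,a]\leq Z_p$.

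\emph{Step 4: $a$ centralizes $R$.} This is the main point. Consider the $p'$-group $\langle a\rangle$ acting on the $p$-group $R$. Coprime action gives $[R,a,a]=[R,a]$. By Step 3, $[R,a]\leq Z_p$, and by Step 1, $a$ acts trivially on $Z_p$. Hence $[R,a,a]=1$, so $[R,a]=1$. Therefore $a$ centralizes $R$.

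One point to watch is that the action of $\langle a\rangle$ on $V$ itself is not coprime. The argument therefore avoids lifting fixed points from $V/Z$ to $V$ and applies coprime action only to $R$.

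\emph{Step 5: $R\in\Syl_p(C_V(a))$.} By Step 4, $R\leq C_V(a)$. The image $C_V(a)Z/Z$ lies in $C_{V/Z}(a)$, whose Sylow $p$-subgroup $RZ/Z$ has order $|R|/|Z_p|$. Hence
\[
 |C_V(a)|_p\leq |Z|_p\,|C_{V/Z}(a)|_p=|Z_p|\cdot|R|/|Z_p|=|R|,
\]
and so $R$ is a Sylow $p$-subgroup of $C_V(a)$.

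The perfectness of $V$ is not needed for this argument; it belongs to the setting where the lemma is applied.
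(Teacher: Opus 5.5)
Your proof is correct and follows the paper's argument: the faithful central character forces $a$ to fix $Z(V)$ pointwise, and $R$ is the unique Sylow $p$-subgroup of $RZ(V)$, hence $a$-stable; coprime action on the series $Z(V)_p\leq R$ then gives $[R,a]=1$. For the Sylow conclusion you use only the upper bound $|C_V(a)|_p\leq|Z(V)|_p\,|C_{V/Z(V)}(a)|_p$, while the paper lifts a Sylow subgroup of $C_{V/Z(V)}(a)$ to get equality, a fact it reuses in the proof of Lemma~\ref{r6:5}; for the lemma as stated your bound suffices, because $R\leq C_V(a)$ already gives the matching lower bound.
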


\begin{proof}
Since $\eta^a=\eta$ and the central character of $\eta$ is faithful, $a$ fixes
$Z(V)$ pointwise. Moreover, $Z(V)_p\leq R$, and $R$ is the unique
Sylow $p$-subgroup of $RZ(V)$. Hence $a$ stabilizes $R$. The action of $a$
is trivial on both $R/Z(V)_p$ and $Z(V)_p$, so coprime action gives
$[R,a]=1$.

To compare the orders of the centralizers, let $Q$ be a Sylow $p$-subgroup of
$C_{V/Z(V)}(a)$. Its preimage has a unique Sylow $p$-subgroup $Q_1$.
This subgroup is $a$-invariant and, by the same coprime action
argument, is centralized by $a$. Thus the image of
$C_V(a)\longrightarrow C_{V/Z(V)}(a)$ contains a Sylow $p$-subgroup.
Consequently,
\[
 |C_V(a)|_p=|Z(V)|_p\,|C_{V/Z(V)}(a)|_p.
\]
Together with $Z(V)_p\leq R$, this proves the assertion. The argument
uses only equality of the $p$-parts of these orders and does not require
surjectivity of the centralizer map.
\end{proof}

Lemma~\ref{r6:4} yields the central-section trace criterion of
\cite[Definition~7.2(iii)]{Sp13} for the pair in
Definition~\ref{r6:1}. More precisely, suppose that a $p'$-element $x$
stabilizing $\vartheta$ induces an automorphism of $U/Z(U)$ for which
$RZ(U)/Z(U)$ is a Sylow $p$-subgroup of the centralizer. Its action
on $V=U/W$ then satisfies the hypothesis of
\cite[Theorem~4.4]{NS14}. For the representations
$\mathcal P,\mathcal P'$ inflated from the faithful-central-character
quotient, this gives
\begin{equation}\label{r6:eq:3}
 \left(\frac{|U|_{p'}\tr\mathcal P(x)}{\vartheta(1)_{p'}}\right)^*
 =
 \left(\frac{|L_R|_{p'}\tr\mathcal P'(x)}{\varphi(1)_{p'}}\right)^*.
\end{equation}
Replacing $V$ by $U$ multiplies both sides by $|W|_{p'}^*$.
The central-section construction multiplies both representation matrices
by the same root of unity and therefore preserves the equality. Thus
\eqref{r6:eq:3} follows from the stated inductive condition and
requires no additional hypothesis on simple groups.

\subsection{Products and permutations of components}\label{r6:sub:4}

We apply the tensor and permutation constructions for $\mathcal H$-triples
to the local data of Definition~\ref{r6:1}. We use explicit projective representations
to verify the block condition after quotienting by a possibly diagonal
central subgroup.

Let $U_i$ be universal covering groups of non-abelian simple groups,
and set
\[
 \widetilde K=\prod_{i=1}^r U_i.
\]
Let $\widetilde D=\prod_iD_i$ be a block defect group. Choose the
local data compatibly within each isomorphism class and each automorphism
orbit of radical subgroups. Set $M_i=L_{D_i}$ and
\[
 \widetilde M=\prod_iM_i.
\]
We also use $\otimes$ to denote the external tensor product of characters;
thus
\[
 (\vartheta_1\otimes\cdots\otimes\vartheta_r)(u_1,\ldots,u_r)
   =\prod_{i=1}^r\vartheta_i(u_i)
 \qquad (u_i\in U_i).
\]
Define
\[
 \widetilde\Omega:\Irr_0(\widetilde K\mid\widetilde D)
   \longrightarrow\Irr_0(\widetilde M\mid\widetilde D)
\]
by
\[
 \widetilde\Omega(\vartheta_1\otimes\cdots\otimes\vartheta_r)
 =\omega_{D_1}(\vartheta_1)\otimes\cdots
   \otimes\omega_{D_r}(\vartheta_r).
\]
Blocks and defect groups of a direct product decompose as products
of blocks and defect groups of its factors, and character heights are additive. Thus $\widetilde\Omega$ is a bijection on
the indicated height-zero sets. It preserves central characters and
blocks with a common Brauer correspondent, and is
$\widetilde\Gamma\times\mathcal H$-equivariant, where
$\widetilde\Gamma=\Aut(\widetilde K)_{\widetilde D}$.

Fix $\Theta=\bigotimes_i\vartheta_i$ and set
$\Phi=\widetilde\Omega(\Theta)$. Inflate the representations in
Definition~\ref{r6:1} through the individual central kernels to the
groups $U_i$, and denote the resulting pairs by
$\mathcal P_i,\mathcal P_i'$. After making the chosen transports, define the tensor permutation
representation on a wreath product of equal character factors by
\begin{equation}\label{r6:eq:4}
 \mathcal P((x_1,\ldots,x_s)\pi)
   =\bigl(\mathcal P_1(x_1)\otimes\cdots\otimes\mathcal P_s(x_s)\bigr)
      T_\pi.
\end{equation}
On the local side, use $\mathcal P_i'$ together with the corresponding
tensor permutation matrices. The entries of $T_\pi$ lie in $\{0,1\}$. The projective representations
for characters in the same orbit are obtained from those for a fixed
orbit representative by automorphism and Galois conjugation.

The constructions for $\geq_c$ are those of
\cite[Lemmas~2.5--2.6 and Theorem~2.7]{NSV20}. We record the comparison
functions for the representations chosen here. For a diagonal mixed
element $a=(h,\sigma)$ acting on a block of equal factors, they satisfy
\begin{equation}\label{r6:eq:5}
 \begin{aligned}
 \mu_a((x_1,\ldots,x_s)\pi)&=\prod_{i=1}^s\mu_{h\sigma}(x_i),\\
 \mu_a'((x_1,\ldots,x_s)\pi)&=\prod_{i=1}^s\mu_{h\sigma}'(x_i).
 \end{aligned}
\end{equation}
These identities follow by tensoring the intertwining matrices for the
individual factors and using $T_\pi^\sigma=T_\pi$. For a general mixed stabilizer, partition
the factors according to their $\Aut(U_i)_{D_i}\times\mathcal H$-orbits
and multiplicities. On each resulting block, apply the preceding construction to the
stabilizers of the individual direct factors, permutations of these
factors, and diagonal mixed elements.
The composition rule for comparison functions extends
\eqref{r6:eq:5} to all mixed stabilizers. The resulting pair
$\mathcal P,\mathcal P'$ therefore has common factor sets, common
central scalars, and equal mixed comparison functions.

\begin{lemma}\label{r6:5}
Set
\[
 W_\Theta=Z(\widetilde K)\cap\ker\Theta,\qquad
 F=\widetilde K/W_\Theta,\qquad L=\widetilde M/W_\Theta.
\]
The pair of tensor permutation representations constructed above
descends to a pair affording
\begin{equation}\label{r6:eq:6}
 \bigl(F\rtimes\widetilde\Gamma_{\Theta^{\mathcal H}},F,
       \bar\Theta\bigr)_{\mathcal H}
 \geq_b
 \bigl(L\rtimes\widetilde\Gamma_{\Theta^{\mathcal H}},L,
       \bar\Phi\bigr)_{\mathcal H}.
\end{equation}
\end{lemma}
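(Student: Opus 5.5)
We verify the conditions of Lemma~\ref{r6:2}(ii) for the tensor permutation pair after descent to $F$ and $L$, so that the relation reduces to a $\geq_c$-relation, a centralizer inclusion, and the block condition on intermediate subgroups. The $\geq_c$ part is largely in place. The pair $\mathcal P,\mathcal P'$ built from \eqref{r6:eq:4} has a common factor set, common central scalars and, by \eqref{r6:eq:5} together with the composition rule \eqref{r4:eq:10}, equal mixed comparison functions on the full mixed stabilizer.

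\textbf{Step 1: descent to the quotient.} Since $W_\Theta\leq Z(\widetilde K)\cap\ker\Theta$, the restriction of $\mathcal P$ to $\widetilde K$ is trivial on $W_\Theta$, and $\mathcal P'$ is trivial on $W_\Theta\leq\widetilde M$ because $\widetilde\Omega$ preserves central characters. On elements of $W_\Theta$, each factor $\mathcal P_i(w_i)$ is scalar, and the product of these scalars is $1$. The group $W_\Theta$ is invariant under $\widetilde\Gamma_{\Theta^{\mathcal H}}$, so both representations descend to the semidirect products. The descent preserves factor sets (they are inflated from the top quotient), central scalars and comparison functions, exactly as in the proof of Theorem~\ref{r4:15}. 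The group factorization and the equality of mixed stabilizers follow from the $\widetilde\Gamma\times\mathcal H$-equivariance of $\widetilde\Omega$. The common defect group is $\widetilde DW_\Theta/W_\Theta$, and the normalizer inclusion follows as in \eqref{r6:eq:2}, since $W_\Theta\cap\widetilde D$ is central and $N_{\widetilde K}(\widetilde D)=\prod N_{U_i}(D_i)\leq\widetilde M$. The centralizer inclusion then follows from the Frattini argument of Lemma~\ref{r6:2}.

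\textbf{Step 2: the block condition.} This is the main obstacle. I would verify it through the normalized trace criterion \cite[Theorem~4.4]{NS14}, as in Lemma~\ref{r4:9}. Let $x$ be a $p'$-element of $(F\rtimes\widetilde\Gamma_{\Theta^{\mathcal H}})_{\bar\Theta}$ for which $\bar D$ is Sylow in $C_F(x)$. Lift $x$ to a $p'$-element of $\widetilde K\rtimes\widetilde\Gamma$, written as $(x_1,\ldots,x_r)\pi\gamma$ with $\pi$ a permutation. Since $\operatorname{tr}(A\otimes B\,T_\pi)$ factors over the cycles of $\pi$, the trace of $\mathcal P(x)$ is a product over cycles. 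Each cycle of length $\ell$ contributes the trace of a single factor representation evaluated at the ``$\ell$-th power'' element $x_{i_1}x_{i_2}^{\pi}\cdots$ composed with $\gamma^\ell$ acting on one component. The same factorization holds for $\mathcal P'$ with identical permutation data. Centralizers factor in the same way: the centralizer of $x$ in a cycle block of the product embeds diagonally as the centralizer in one factor $U_i$ of the cycle product. Hence $\bar D$ being Sylow forces each $D_i$-image to be Sylow in the corresponding factor centralizer. Here Lemma~\ref{r6:4} passes from $U_i/Z(U_i)$ to $U_i$, using that the central characters are faithful after quotienting by $W_i$.

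For each cycle, \eqref{r6:eq:3} then gives the normalized trace identity. The $p'$-parts of the group orders and character degrees behave multiplicatively across cycles. The only subtle points are the cycle-length factors. The relevant indices $|U_i|^{\ell-1}$ appear identically on both sides and are units after normalization, since the degrees are compared at $p'$-parts. Taking the product over cycles yields the identity for $\widetilde K$. The factor $|W_\Theta|_{p'}^*$ then cancels upon passing to $F$, as in the last paragraph before this subsection.

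There remains the case where $\bar D$ is Sylow in $C_F(x)$ but the lift need not make each factor Sylow, which arises from the diagonal central quotient. I would handle it by comparing $|C_{\widetilde K}(\tilde x)|_p$ with $|W_\Theta|_p|C_F(x)|_p$ using the coprime-action argument of Theorem~\ref{r4:15}. That argument gives $\widetilde D\in\Syl_p(C_{\widetilde K}(\tilde x))$, so the factorwise analysis applies. With the trace identity established for all required $x$, \cite[Theorem~4.4]{NS14} yields the block condition on every intermediate subgroup for this same pair, and Lemma~\ref{r6:2} completes the proof of \eqref{r6:eq:6}.
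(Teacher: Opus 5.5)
Your overall route is the paper's: descend the pair, verify the normalized trace identity of \cite[Theorem~4.4]{NS14} by factoring the trace of \eqref{r6:eq:4} over the cycles of the permutation, apply \eqref{r6:eq:3} on the factors, and cancel $|W_\Theta|_{p'}^*$. However, Step~2 has a genuine gap in how you treat non-trivial cycles.

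\textbf{Non-trivial cycles.} Take a cycle of length $\ell$ through factors isomorphic to $U_i$. The normalizing factors on the two sides are $(|U_i|_{p'}/\vartheta_i(1)_{p'})^{\ell}$ and $(|M_i|_{p'}/\varphi_i(1)_{p'})^{\ell}$. Applying \eqref{r6:eq:3} once to the cycle product matches only one power of each. Nothing in Definition~\ref{r6:1} gives
\[
\bigl(|U_i|_{p'}/\vartheta_i(1)_{p'}\bigr)^{\ell-1}\equiv\bigl(|M_i|_{p'}/\varphi_i(1)_{p'}\bigr)^{\ell-1},
\]
since $1$ is not a test element unless $D_i$ is Sylow in $U_i$. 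So your claim that the factors $|U_i|^{\ell-1}$ ``appear identically on both sides'' is false whenever $M_i<U_i$: the local side carries $|M_i|^{\ell-1}$. Likewise, $\prod_j D_{i_j}$ cannot lie in a diagonally embedded centralizer unless the $D_{i_j}$ are central.

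The missing observation is that this case never occurs. Because $\bar D\leq C_F(x)$, the element $x$ centralizes the image of $\widetilde D$ in $\widetilde K/Z(\widetilde K)=\prod_iU_i/Z(U_i)$. It therefore fixes every index $i$ with $D_i\nleq Z(U_i)$, since otherwise it would move a non-identity element of that image into another direct factor. Hence non-trivial cycles involve only factors with central $D_i$. For those factors $M_i=U_i$ and $\mathcal P_i'=\mathcal P_i$ by convention, so their contributions to the two sides coincide identically and no comparison of powers is needed.

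\textbf{The ``remaining case''.} Your claim here is false as stated. The coprime-action argument of Theorem~\ref{r4:15} used that $Z$ is central in the whole ambient group. Here $W_\Theta$ is central in $\widetilde K$ but need not be centralized by $x$.

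For example, let $p$ be odd and let $x$ interchange two factors $U_i\cong U_j$ with $Z(U_i)_p\neq1$ and transported characters. Then $W_\Theta$ contains the antidiagonal $\{(z,z^{-1})\}$ of $Z(U_i)_p\times Z(U_j)_p\leq\widetilde D$, and $x$ inverts it. Thus $x$ does not even centralize $\widetilde D$, and $\widetilde D\notin\Syl_p(C_{\widetilde K}(\tilde x))$. This is exactly the diagonal situation the lemma has to cover.

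The correct route is to work modulo the full centre. Since $x$ fixes $\bar\Theta$, whose central character is faithful, $x$ acts trivially on $Z(F)$. The centralizer count from the proof of Lemma~\ref{r6:4} then gives \eqref{r6:eq:7}. That is all \eqref{r6:eq:3} requires on each stabilized factor, because it needs only the Sylow condition modulo $Z(U_i)$.

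\textbf{A smaller point.} Equality of central scalars after descent needs that elements acting trivially on $\widetilde K/W_\Theta$ centralize $\widetilde K$. This follows from perfectness of $\widetilde K$, not from the faithfulness argument of Lemma~\ref{r4:14} that you cite via Theorem~\ref{r4:15}.
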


\begin{proof}
First, $Z(F)=Z(\widetilde K)/W_\Theta$. Indeed, if the image of
$g\in\widetilde K$ is central in $F$, then
$k\mapsto[k,g]\in W_\Theta$ is a homomorphism from a perfect group
to an abelian group. It is trivial, so $g\in Z(\widetilde K)$.

The subgroup $W_\Theta$ is invariant under
$\widetilde\Gamma_{\Theta^{\mathcal H}}$ and is contained in
$\ker\Phi$. The associated representations are the identity on
$W_\Theta$, so they are constant on its cosets and descend to the
quotient. The centralizer condition also descends. If an automorphism
$a$ of $\widetilde K$ acts trivially modulo $W_\Theta$, then
$k\mapsto[k,a]$ is a homomorphism $\widetilde K\to W_\Theta$ and is
therefore trivial. Thus, for
$G=\widetilde K\rtimes\widetilde\Gamma_{\Theta^{\mathcal H}}$,
\[
 C_{G/W_\Theta}(\widetilde K/W_\Theta)
   =C_G(\widetilde K)/W_\Theta.
\]
It follows, also by \cite[Lemma~2.4]{NSV20}, that the descended pair
satisfies all conditions for $\geq_c$. It remains to verify the block condition for this same pair.

Let $\bar D=\widetilde D W_\Theta/W_\Theta$, a common defect group
of the blocks of $\bar\Theta$ and $\bar\Phi$. Let $x$ be a
$p'$-element of the local ordinary inertia group such that
$\bar D\in\Syl_p(C_F(x))$. Since the central character of
$\bar\Theta$ is faithful, $x$ fixes $Z(F)$ pointwise. The centralizer
argument in the proof of Lemma~\ref{r6:4}, which shows that the image
contains a Sylow $p$-subgroup, gives
\begin{equation}\label{r6:eq:7}
 \widetilde D Z(\widetilde K)/Z(\widetilde K)
 \in\Syl_p\bigl(C_{\widetilde K/Z(\widetilde K)}(x)\bigr).
\end{equation}
In particular, $x$ fixes this image of the defect group pointwise.

Suppose that $D_i\not\leq Z(U_i)$. Its image in the simple direct
factor $U_i/Z(U_i)$ of $\widetilde K/Z(\widetilde K)$ is non-trivial.
The permutation of these direct factors induced by $x$ must fix $i$: otherwise a non-identity element
of this image would be sent to a different direct factor. Hence every
non-trivial permutation cycle involves only factors $U_i$ with
$D_i\leq Z(U_i)$. For these factors, $M_i=U_i$ and
$\mathcal P_i'=\mathcal P_i$ by convention, so their contributions
to the two traces agree along each such cycle.

For each remaining factor $U_i$ stabilized by $x$, the induced
action $x_i$ has $p'$-order and stabilizes $\vartheta_i$.
The direct product decomposition and
\eqref{r6:eq:7} give
\[
 D_iZ(U_i)/Z(U_i)\in\Syl_p(C_{U_i/Z(U_i)}(x_i)).
\]
We may therefore apply \eqref{r6:eq:3} to these factors. To
evaluate the matrices, choose a $p'$-preimage of $x$ in
$\widetilde K\rtimes\widetilde\Gamma_\Theta$. Its component in the
semidirect product associated with each stabilized factor is again a
$p'$-element.

The trace in \eqref{r6:eq:4} factors over the permutation cycles.
Multiplying the identities \eqref{r6:eq:3} for the factors stabilized
by $x$ and using the
identical matrices on each non-trivial cycle yields
\[
 \left(\frac{|\widetilde K|_{p'}\tr\mathcal P(x)}
                  {\Theta(1)_{p'}}\right)^*
 =
 \left(\frac{|\widetilde M|_{p'}\tr\mathcal P'(x)}
                  {\Phi(1)_{p'}}\right)^*.
\]
Any common scalar adjustment made when assembling the orbit
representatives multiplies both sides by the same scalar, so the
equality is preserved.
Here $\Theta(1)_{p'}=\prod_i\vartheta_i(1)_{p'}$ and
$\Phi(1)_{p'}=\prod_i\varphi_i(1)_{p'}$, where
$\varphi_i=\omega_{D_i}(\vartheta_i)$; the normalizing factors also
agree for the factors $U_i$ with $D_i\leq Z(U_i)$. Cancelling the common factor
$|W_\Theta|_{p'}$ gives the trace equality for $F$ and $L$.

By \cite[Theorem~4.4]{NS14}, this pair satisfies the block condition
on every intermediate subgroup. Moreover, $N_F(\bar D)\leq L$,
since $\widetilde D$ is the unique Sylow $p$-subgroup of the central
preimage $\widetilde D W_\Theta$. This proves \eqref{r6:eq:6}.
\end{proof}

The subgroup $W_\Theta$ may contain diagonal central subgroups on
which permutations of the direct factors act non-trivially. The trace argument
above also applies in this case; it does not require the kernel to be
central in the ambient group.

\subsection{Central products and arbitrary ambient groups}\label{r6:sub:5}

We first establish the component transfer result for a perfect central
product by transferring the representations from the preceding
subsection to an arbitrary ambient group.

\begin{proposition}\label{r6:6}
Let $K\trianglelefteq A$ be perfect, and suppose that $K/Z(K)$ is a
direct product of non-abelian simple groups satisfying
Definition~\ref{r6:1}. Let $D_0$ be a non-central radical $p$-subgroup
of $K$. There exist an $N_A(D_0)$-invariant subgroup
$N_K(D_0)\leq M<K$ and an $N_A(D_0)\times\mathcal H$-equivariant
bijection
\[
 \Omega:\Irr_0(K\mid D_0)\longrightarrow\Irr_0(M\mid D_0)
\]
preserving central characters and blocks with a common Brauer
correspondent. For $H=MN_A(D_0)$, $\theta\in\Irr_0(K\mid D_0)$,
and $\varphi=\Omega(\theta)$, the same construction gives
\[
 (A_{\theta^{\mathcal H}},K,\theta)_{\mathcal H}
 \geq_b(H_{\varphi^{\mathcal H}},M,\varphi)_{\mathcal H}.
\]
\end{proposition}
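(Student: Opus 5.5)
The plan is to realize $K$ as a central quotient of a direct product of universal covers, and then transport the relation of Lemma~\ref{r6:5} to $A$. Since $K$ is perfect and $K/Z(K)=\prod_i S_i$ with each $S_i$ non-abelian simple, the universal property of universal covers gives a surjection
\[
 \widetilde K=\prod_{i=1}^r U_i\longrightarrow K
\]
with central kernel $W_0\leq Z(\widetilde K)$. Every automorphism of $K$ lifts uniquely to $\widetilde K$, so conjugation gives a homomorphism $A\to\Aut(\widetilde K)$ stabilizing $W_0$. Let $\widetilde D_0$ be the unique Sylow $p$-subgroup of the preimage of $D_0$. Then $\widetilde D_0$ is radical in $\widetilde K$, so it decomposes as $\prod_iD_i$ with $D_i$ radical in $U_i$. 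We set $\widetilde M=\prod_iL_{D_i}$, chosen compatibly on $\Aut(\widetilde K)_{\widetilde D_0}$-orbits as in Subsection~\ref{r6:sub:4}, and let $M$ be its image in $K$. The inclusion $N_K(D_0)\leq M$ follows from $N_{U_i}(D_i)\leq L_{D_i}$ and the Sylow-uniqueness argument of \eqref{r6:eq:2}. Since $D_0$ is non-central, some $D_i$ is non-central, so $L_{D_i}<U_i$ and hence $M<K$. Invariance of $M$ under $N_A(D_0)$ follows because $\Gamma$-stability holds factorwise and automorphisms permute the factors.

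The bijection $\Omega$ is obtained from $\widetilde\Omega$ of Subsection~\ref{r6:sub:4}. Characters of $K$ are exactly the characters of $\widetilde K$ with $W_0$ in the kernel. Because $\widetilde\Omega$ preserves central characters, it restricts to characters trivial on $W_0$. Height zero and defect groups are preserved by the central quotient, since $W_{0,p}\leq\widetilde D_0$. The bijection is also equivariant under the image of $N_A(D_0)$ in $\Aut(\widetilde K)_{\widetilde D_0}$ and under $\mathcal H$, and it respects blocks with common Brauer correspondents.

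For the block relation, fix $\theta$ with inflation $\Theta$, set $\Phi=\widetilde\Omega(\Theta)$, and let $W_\Theta\supseteq W_0$ be the central kernel. Lemma~\ref{r6:5} gives the relation \eqref{r6:eq:6} for $F=\widetilde K/W_\Theta=K/(W_\Theta/W_0)$ inside $F\rtimes\widetilde\Gamma_{\Theta^{\mathcal H}}$. We then pass to $A$. Let $\epsilon:A_{\theta^{\mathcal H}}\to\widetilde\Gamma_{\Theta^{\mathcal H}}$ be the conjugation map and form the group $A_{\theta^{\mathcal H}}\ltimes\ldots$. The plan is to invoke the standard ``butterfly'' transfer from \cite[Theorem~2.16]{NSV20}, strengthened to $\geq_b$ in \cite{ChenCT}. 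It replaces $F\rtimes\widetilde\Gamma$ by any group inducing the same automorphisms, provided the centralizers are handled correctly. Concretely, we define $\mathcal P$ on $A_\theta$ by $\mathcal P(a)=\mathcal P_F(\epsilon(a))\cdot$ and correct it on $C_A(K)$ by a scalar chosen from the central character of $\theta$. The same definition is used for $\mathcal P'$ on $H_\varphi$. This keeps the factor sets common, the central scalars equal and the comparison functions equal, as in the $\geq_c$ butterfly. We then use Lemma~\ref{r6:3} to inflate through $W_\Theta/W_0$ where needed.

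The main obstacle is the block condition (iv) after the butterfly, since block induction is not automatically preserved when the ambient group changes. We handle it with the trace criterion of \cite[Theorem~4.4]{NS14} applied to the new pair. For a $p'$-element $x\in H_\varphi$ with $D_0\in\Syl_p(C_K(x))$, the matrices $\mathcal P(x)$ and $\mathcal P'(x)$ equal the images of $\epsilon(x)$ times the same scalar. Lemma~\ref{r6:4} transfers the Sylow condition to $\widetilde K$ modulo its center. The trace identity \eqref{r6:eq:3}, in the product form proved in Lemma~\ref{r6:5}, then gives the required congruence. Finally, the factorization $A_{\theta^{\mathcal H}}=K\,H_{\varphi^{\mathcal H}}$ follows from the Frattini argument on defect groups, as in Lemma~\ref{r6:2}, and the inclusion $C_A(K)\leq H$ holds because $C_A(K)$ centralizes $D_0$.
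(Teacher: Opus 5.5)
Your proposal is correct and follows essentially the same route as the paper: cover $K$ by $\prod_iU_i$, descend $\widetilde\Omega$ through the central kernel, obtain the relation for $F\cong K/T$ from Lemma~\ref{r6:5}, transport it to the actual quotient overgroup $A_{\theta^{\mathcal H}}/T$ by the butterfly-type lemma \cite[Lemma~1.10]{ChenCT}, and inflate through $T$ by Lemma~\ref{r6:3}.

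The one point to tighten is your explicit formula for $\mathcal P$, since conjugation does not map $A_{\theta^{\mathcal H}}$ into $\widetilde\Gamma_{\Theta^{\mathcal H}}$ (elements of $K$ need not normalize $\widetilde D$). Instead, you must pair elements of $A_{\theta^{\mathcal H}}/T$ with elements of $F\rtimes\widetilde\Gamma_{\Theta^{\mathcal H}}$ that induce the same automorphism of $K/T$. Then use $A_{\theta^{\mathcal H}}=KN_{A_{\theta^{\mathcal H}}}(D_0)$ and $C_{A_{\theta^{\mathcal H}}/T}(K/T)=C_{A_{\theta^{\mathcal H}}}(K)/T$ to identify the local preimage with $H_{\varphi^{\mathcal H}}/T$, as the paper does. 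After that, the block condition comes with the cited lemma, and your separate trace-criterion check is redundant, though harmless.
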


\begin{proof}
If $\Irr_0(K\mid D_0)$ is empty, take $M=N_K(D_0)$ and the empty
bijection. Brauer's first main theorem shows that the local set is also
empty. Moreover, $N_K(D_0)<K$, since $O_p(K)\leq Z(K)$. We may
therefore assume that $\Irr_0(K\mid D_0)$ is non-empty. In particular,
$D_0$ is a block defect group.

There is a central epimorphism
\[
 \pi:\widetilde K=\prod_iU_i\longrightarrow K,\qquad
 C=\ker\pi\leq Z(\widetilde K),
\]
where the $U_i$ are universal covering groups. Let $\widetilde D$
be the unique Sylow $p$-subgroup of $\pi^{-1}(D_0)$. It is a defect
group of the corresponding covering block, and therefore has the form
$\prod_iD_i$. We have
\begin{equation}\label{r6:eq:8}
 \pi^{-1}(D_0)=\widetilde D C_{p'},\qquad
 \pi\bigl(N_{\widetilde K}(\widetilde D)\bigr)=N_K(D_0).
\end{equation}
Set $M=\pi(\widetilde M)$. Since
$C\leq Z(\widetilde K)\leq\widetilde M$ and
$\widetilde\Omega$ preserves central characters, it preserves the
condition of having $C$ in the kernel, and hence descends to a
bijection $\Omega$. The block correspondences through central $p$-kernels
and central $p'$-kernels preserve height zero and identify the defect
groups with their asserted images. Thus the descended map has the stated domain
and codomain and preserves blocks with a common Brauer correspondent.

The kernel $C$ may be diagonal. Indeed, if
$\Theta=\bigotimes_i\vartheta_i$ and $\lambda_i$ is the central
character of $\vartheta_i$, then
\[
 C\leq\ker\Theta
 \quad\Longleftrightarrow\quad
 \prod_i\lambda_i(c_i)=1\quad\text{for every }(c_i)_i\in C.
\]
Preservation of each $\lambda_i$ therefore preserves this condition,
even when the individual central characters are non-trivial.

Every automorphism of $K$ lifts uniquely to an automorphism of
$\widetilde K$ stabilizing $C$. Uniqueness follows from perfectness:
the difference of two lifts is a central automorphism and determines
a homomorphism into an abelian central subgroup. By \eqref{r6:eq:8},
the lifted action of $N_A(D_0)$ stabilizes $\widetilde D$. Hence $M$
is invariant under $N_A(D_0)$ and $\Omega$ is jointly equivariant. Since $D_0$ is
non-central, some $D_i$ is non-central, so $M_i<U_i$. As
$C\leq\widetilde M$, it follows that $M<K$.

Fix $\theta$, and set
\[
 \Theta=\theta\circ\pi,\qquad
 W=Z(\widetilde K)\cap\ker\Theta,\qquad
 T=W/C=Z(K)\cap\ker\theta.
\]
Then $C\leq W$ and $\widetilde K/W\cong K/T$.
Lemma~\ref{r6:5} gives the relation \eqref{r6:eq:6} in the corresponding
semidirect products, with $F$ identified with $K/T$ and $\bar\Theta$
having faithful central character. Restrict the associated projective
representations to the inverse image of the action of
$A_{\theta^{\mathcal H}}/T$ on $K/T$, and
apply \cite[Lemma~1.10]{ChenCT}. This gives the relation on the actual
quotient overgroup. Here $T\trianglelefteq A_{\theta^{\mathcal H}}$,
and perfectness gives
\begin{equation}\label{r6:eq:9}
 C_{A_{\theta^{\mathcal H}}/T}(K/T)
   =C_{A_{\theta^{\mathcal H}}}(K)/T.
\end{equation}
Indeed, if $a$ acts trivially on $K/T$, then
$k\mapsto[k,a]\in T$ is a homomorphism and must be trivial.

We next identify the local subgroup. Since $T\leq Z(K)$ and $T_p\leq D_0$,
the unique Sylow $p$-subgroup of $D_0T=D_0T_{p'}$ is $D_0$.
Therefore
\[
 N_{A_{\theta^{\mathcal H}}/T}(D_0T/T)
   =N_{A_{\theta^{\mathcal H}}}(D_0)T/T.
\]
Conjugacy of defect groups and preservation of defect groups under
$\mathcal H$ also give
\[
 A_{\theta^{\mathcal H}}=K N_{A_{\theta^{\mathcal H}}}(D_0).
\]
In the induced action on $K/T$, the image of the local subgroup is generated by
$\Inn(M/T)$ and the relevant automorphisms normalizing $D_0T/T$.
An induced automorphism belongs to this image precisely when composing
it with an inner automorphism from $M/T$ makes it normalize $D_0T/T$. Its preimage in
the actual overgroup is consequently
$(M N_{A_{\theta^{\mathcal H}}}(D_0))/T$. By mixed equivariance of
$\Omega$, this is $H_{\varphi^{\mathcal H}}/T$. Finally, apply
Lemma~\ref{r6:3} through the normal kernel $T$. This proves the
asserted relation, since the inflation argument requires only normality
of $T$ in the overgroup.
\end{proof}

\subsection{The component transfer theorem}\label{r6:sub:6}

Adjoining the central factor gives the component transfer theorem in
the form required for the reduction, with a proper local subgroup.

\begin{theorem}\label{r6:7}
Let $K\trianglelefteq A$, and suppose that $K/Z(K)$ is a direct
product of non-abelian simple groups satisfying Definition~\ref{r6:1}.
Let $D_0$ be a non-central radical $p$-subgroup of $K$. There exist
an $N_A(D_0)$-invariant subgroup and an
$N_A(D_0)\times\mathcal H$-equivariant bijection
\[
 N_K(D_0)\leq M<K,\qquad
 \Omega:\Irr_0(K\mid D_0)\longrightarrow\Irr_0(M\mid D_0)
\]
preserving central characters and blocks with a common Brauer
correspondent. Set $H=MN_A(D_0)$. For every
$\theta\in\Irr_0(K\mid D_0)$ and $\varphi=\Omega(\theta)$, we have
\begin{equation}\label{r6:eq:10}
 (A_{\theta^{\mathcal H}},K,\theta)_{\mathcal H}
 \geq_b(H_{\varphi^{\mathcal H}},M,\varphi)_{\mathcal H}.
\end{equation}
\end{theorem}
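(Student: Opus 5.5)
The plan is to reduce Theorem~\ref{r6:7} to Proposition~\ref{r6:6}, applied to the last term of the derived series of $K$. Set $K_1=K^{(\infty)}$, which is characteristic in $K$ and hence normal in $A$. Because $K/Z(K)$ is a direct product of non-abelian simple groups, $K=K_1Z(K)$. The group $K_1$ is perfect, and $K_1/Z(K_1)$ is the same direct product of simple groups. Thus $K$ is a central product of $K_1$ and $Z:=Z(K)$, and
\[
 \Irr(K)\ni\theta\longleftrightarrow(\theta_{K_1},\lambda),\qquad \lambda\in\Irr(Z),
\]
with the two characters agreeing on $K_1\cap Z$. Put $D_1=D_0\cap K_1$. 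Since $D_0$ contains $Z_p$ and $D_0=D_1Z_p$, the subgroup $D_1$ is a non-central radical $p$-subgroup of $K_1$. Moreover, $N_K(D_0)=N_{K_1}(D_1)Z$ and $N_A(D_0)=N_A(D_1)$. The last equality holds because $D_1=D_0\cap K_1$ and $D_0=D_1Z_p$ with $Z_p$ characteristic.

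First I apply Proposition~\ref{r6:6} to $K_1\trianglelefteq A$ and $D_1$. This gives $N_{K_1}(D_1)\leq M_1<K_1$ together with $\Omega_1$, and I set $M=M_1Z$. Then $N_K(D_0)\leq M$. To see $M<K$, note that $M\cap K_1=M_1(Z\cap K_1)=M_1$, because $Z\cap K_1\leq Z(K_1)\leq M_1$. Next I define
\[
 \Omega(\theta)=\Omega_1(\theta_{K_1})\cdot\lambda,
\]
the central-product character. This is well defined because $\Omega_1$ preserves central characters on $Z(K_1)\supseteq K_1\cap Z$. Height zero and the defect group $D_0=D_1Z_p$ are compatible with central products: a block of $K$ is dominated by the tensor block of $K_1\times Z$, the defects add, and the $p$-parts of the degrees agree. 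Blocks with a common Brauer correspondent are preserved, and so are central characters. Equivariance under $N_A(D_0)\times\mathcal H$ follows because $Z$ and $K_1$ are both $A$-stable and the action on $\lambda$ is the same on both sides.

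For the relation \eqref{r6:eq:10}, I start from the relation
\[
 (A_{\theta_1^{\mathcal H}},K_1,\theta_1)_{\mathcal H}\geq_b (H_1,M_1,\varphi_1)_{\mathcal H},
\]
where $\theta_1=\theta_{K_1}$, $\varphi_1=\Omega_1(\theta_1)$, and $H_1=(M_1N_A(D_1))_{\varphi_1^{\mathcal H}}$; this relation is supplied by Proposition~\ref{r6:6}. I then pass up to $K=K_1Z$. The cleanest route is the normal-subgroup transfer theorem \cite[Theorem~3.2]{ChenCT}. Alternatively, \cite[Theorem~2.19]{ChenCT} applies to the intermediate normal subgroup $K_1\leq K\trianglelefteq A_{\theta_1^{\mathcal H}}$. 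Here $K/K_1$ is abelian and $\theta$ is the unique extension over $\lambda$. Concretely, on $K$ I take $\mathcal P(kz)=\lambda(z)\mathcal P_1(k)$, and on $M$ I take $\mathcal P'(mz)=\lambda(z)\mathcal P'_1(m)$. These are well defined because the central scalars agree on $K_1\cap Z$. I extend them to the stabilizers of $\theta$ and $\varphi$, which are contained in those of $\theta_1$ and $\varphi_1$ intersected with the stabilizer of $\lambda$. The factor sets and the comparison functions are unchanged, since $\lambda^{a}=\lambda$ exactly when $a$ stabilizes the fiber. The block condition on intermediate groups $K\leq W\leq G_\theta$ follows because the tensor correspondences over $\theta$ are restrictions of those over $\theta_1$ to characters lying over $\lambda$. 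This is precisely what \cite[Theorem~2.7]{ChenCT} gives, since $K\leq A_{\theta_1}$ modulo the $\lambda$-twist.

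The main obstacle is the overgroups. Proposition~\ref{r6:6} uses the orbit stabilizer $A_{\theta_1^{\mathcal H}}$, but the theorem requires $A_{\theta^{\mathcal H}}$, which stabilizes the joint orbit of $(\theta_1,\lambda)$ and may be strictly smaller. I would restrict the relation to $A_{\theta^{\mathcal H}}$ using \cite[Lemma~1.10]{ChenCT}, as in the proof of Proposition~\ref{r6:6}. This is legitimate because $A_{\theta^{\mathcal H}}=K N_{A_{\theta^{\mathcal H}}}(D_0)$ by the Frattini argument on defect groups. I would then check that the local side becomes $H_{\varphi^{\mathcal H}}$ with $H=MN_A(D_0)$, using mixed equivariance of $\Omega$ and the equality $A_{\theta^{\mathcal H}}\cap H=H_{\varphi^{\mathcal H}}$. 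Finally, I would verify $C_{A_{\theta^{\mathcal H}}}(K)\leq H$, which holds since this centralizer lies in $N_A(D_0)$.
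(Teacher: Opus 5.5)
Your proposal is correct and follows the paper's proof. You pass to the perfect part $K^{(\infty)}=K'$, apply Proposition~\ref{r6:6}, set $M=M_1Z(K)$ and $\Omega(\theta)=\Omega_1(\theta_{K_1})\cdot\lambda$, and lift the relation to $K\trianglelefteq A_{\theta_1^{\mathcal H}}$ with the homogeneous transfer theorem \cite[Theorem~2.7]{ChenCT}, which is the paper's route. The only difference is that the paper needs no separate restriction via \cite[Lemma~1.10]{ChenCT}: it checks directly that $A_{\theta^{\mathcal H}}$ and $H_{\varphi^{\mathcal H}}$ lie in $A_{\theta_1^{\mathcal H}}$, so the overgroups that transfer theorem produces are already the required ones.
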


\begin{proof}
If $\Irr_0(K\mid D_0)$ is empty, take $M=N_K(D_0)$ and the empty
bijection. Brauer's first main theorem shows that the local set is also
empty. Moreover, $N_K(D_0)<K$, since $O_p(K)\leq Z(K)$. We may
therefore assume that $\Irr_0(K\mid D_0)$ is non-empty. In particular,
$D_0$ is a block defect group.

We have $K=EZ(K)$, where $E=K'$ is perfect and $E/Z(E)$ has the
same simple factors as $K/Z(K)$. The block and defect-group
correspondences for this central product give
$D_0=D_EZ(K)_p$, where $D_E=D_0\cap E$ is a defect group of the
corresponding block of $E$. Moreover, $D_E$ is non-central.
Apply Proposition~\ref{r6:6}
to obtain $M_E$ and $\Omega_E$, and set $M=M_EZ(K)$. Since
$Z(E)\leq M_E$, we have $M\cap E=M_E$, and hence $M<K$.

Each $\theta$ has a unique expression as a central product of
characters with compatible central characters,
\[
 \theta=\theta_E\cdot\nu,\qquad \nu\in\Irr(Z(K)).
\]
Define
\[
 \Omega(\theta)=\Omega_E(\theta_E)\cdot\nu.
\]
Since $\Omega_E$ preserves central characters, this defines a bijection.
Computing blocks, defect groups, and heights on $E\times Z(K)$ and
then passing through the central kernel gives the stated properties.

For the strong relation, set
\[
 G=A_{\theta_E^{\mathcal H}},\qquad H_E=M_E N_G(D_E).
\]
Since $K=EZ(K)$, we have $K\leq G_{\theta_E}$ and
$K\trianglelefteq G$. Moreover,
\[
 N_K(D_E)=N_E(D_E)Z(K),\qquad K\cap H_E=M_EZ(K)=M.
\]
Apply \cite[Theorem~2.7]{ChenCT} to the normal intermediate subgroup $K$.
This theorem uses the original pair of projective representations
affording the relation over $E$. The associated map $\tau_K$ preserves central
characters on $Z(K)\leq C_G(E)$, so
\[
 \tau_K(\theta_E\cdot\nu)=\Omega_E(\theta_E)\cdot\nu.
\]
Consequently, all mixed comparison functions and the block conditions
on every larger intermediate subgroup are preserved.

Finally, $D_0=D_EZ(K)_p$ and $D_E=D_0\cap E$ imply
\[
 N_A(D_0)=N_A(D_E).
\]
Thus $H=MN_A(D_0)=M_EN_A(D_E)$ and $H_E=H\cap G$. If
$\varphi^h=\varphi^\sigma$, restriction to $M_E$ and equivariance of
$\Omega_E$ give $\theta_E^h=\theta_E^\sigma$, so $h\in G$.
Similarly, $A_{\theta^{\mathcal H}}\leq G$. Thus the overgroups obtained from the homogeneous transfer theorem are
precisely
$A_{\theta^{\mathcal H}}$ and $H_{\varphi^{\mathcal H}}$. This
completes the proof.
\end{proof}

\begin{remark}\label{r6:8}
The hypothesis on simple groups in Theorem~\ref{r6:7} is exactly
Definition~\ref{r6:1}. The corresponding results for arbitrary overgroups, direct products,
and diagonal central quotients follow from this condition. The ordinary block arguments are drawn from \cite{NS14,Sp13,Sp17},
and the Galois comparison functions are constructed as in
\cite{NSV20}.
The block conditions for those constructions are established by
\cite[Lemma~1.10]{ChenCT} and Lemma~\ref{r6:5}. The mixed comparison
functions on the individual universal covering groups remain part of
the iAMN condition; they do not follow merely from the ordinary
inductive Alperin--McKay condition.
\end{remark}

\section{Proof of the reduction theorem}\label{sec:reduction}

We prove the reduction theorem by the group-theoretic induction of
\cite[Section~7]{NS14}. Throughout this section, $\geq_b$ denotes the
relation of \cite[Definition~1.1]{ChenCT}. Thus the same pair of
associated projective representations must satisfy both the block
conditions and the required equalities between the Galois comparison
functions.

\subsection{The inductive assertion and the transfer results}\label{r7:sub:1}
Fix the prime $p$ and the group $\mathcal H$, and set
\[
 m(V)=|V:Z(V)|
\]
for every finite group $V$. Let $\mathsf R(V)$ denote the following
assertion: for every finite group $B$ containing $V$ as a normal subgroup
and every $p$-subgroup $Q\leq V$, there is an
$N_B(Q)\times\mathcal H$-equivariant bijection
\begin{equation}\label{r7:eq:1}
 \Pi_{V,Q}\colon\Irr_0(V\mid Q)\longrightarrow
                 \Irr_0(N_V(Q)\mid Q)
\end{equation}
such that
\begin{equation}\label{r7:eq:2}
 \bl(\Pi_{V,Q}(\rho))^V=\bl(\rho)
\end{equation}
and, writing $\rho'=\Pi_{V,Q}(\rho)$,
\begin{equation}\label{r7:eq:3}
 (B_{\rho^{\mathcal H}},V,\rho)_{\mathcal H}
 \geq_b
 \bigl(N_B(Q)_{(\rho')^{\mathcal H}},N_V(Q),\rho'\bigr)_{\mathcal H}.
\end{equation}
The assertion allows an arbitrary finite ambient group $B$, including
the finite semidirect products that arise in the centralization
construction.

We first recall the four transfer results established or recorded
earlier that will be used in the proof.

\smallskip
\noindent\emph{The central-defect DGN correspondence.}
Corollary~\ref{r3:2} provides the required correspondence on all
lower-character fibers when $K\leq V$ are normal in an ambient group,
$V=KD$, and $K\cap D\leq Z(V)$. The correspondence takes values in
$\Irr_0(N_V(D)\mid D)$. This result from the related DGN paper is
recorded in Section~\ref{sec:inputs}.

\smallskip
\noindent\emph{Semilinear centralizing descent.}
Proposition~\ref{r4:18} assumes that $\mathsf R(W)$ holds for every
$W$ with $m(W)<n$, with no restriction on the finite ambient group.
For a $T$-invariant character $\zeta\in\Irr(K)$ and $|T:K|<n$, it
provides a bijection
\[
 \Irr_0(T\mid D,\zeta^{\mathcal H})\longrightarrow
 \Irr_0(KN_T(D)\mid D,\zeta^{\mathcal H})
\]
with the stated fiber, block, equivariance, and block
$\mathcal H$-triple properties. The only group to which induction is
applied in that construction is a quotient satisfying
\[
 m(\widehat T/K_0)\leq |T:K|<n.
\]
The external bound $n$ is kept fixed when $T$ is an inertia subgroup;
in particular, the construction does not require
$|T:K|<m(T)$.

\smallskip
\noindent\emph{Gluing correspondences.}
The first gluing result, \cite[Theorem~3.2]{ChenCT}, lifts a block
$\mathcal H$-triple correspondence from a normal subgroup to a normal
intermediate group with a prescribed upper defect group. The second,
\cite[Theorem~3.3]{ChenCT}, combines compatible centralizing correspondences
on the lower-character inertia groups. With the notation of the latter
result, let $u_\zeta$ be the map on the $\zeta$-fiber and let $\gamma$
be the Clifford correspondent of $\chi$. The resulting map is
\begin{equation}\label{r7:eq:4}
 \begin{split}
 u\colon\Irr_0(X\mid D)&\longrightarrow\Irr_0(Y\mid D),
 \qquad Y=KN_X(D),\\
 u(\chi)&=u_\zeta(\gamma)^Y.
 \end{split}
\end{equation}
The projective representations constructed in
\cite[Lemma~3.1]{ChenCT} satisfy both the required equalities between
the mixed comparison functions and the block conditions on every
intermediate group.
The second gluing result requires no equality between the indices of
the two inducing subgroups.

\smallskip
\noindent\emph{Transfer from quasisimple components.}
Theorem~\ref{r6:7} applies to a central product $K$ of quasisimple
groups whose simple quotients satisfy Definition~\ref{r6:1}. For a
non-central radical subgroup $D_0\leq K$, it provides an invariant
proper subgroup $N_K(D_0)\leq M<K$ and a block correspondence from
$K$ to $M$ on all height-zero characters with specified defect group
$D_0$. The resulting block $\mathcal H$-triple relation is realized
throughout by the pair constructed in that theorem.

\begin{theorem}\label{r7:1}
Suppose that every non-abelian finite simple group satisfies the inductive
condition in Definition~\ref{r6:1} for the prime $p$. Then
$\mathsf R(X)$ holds for every finite group $X$. In particular, the
block-bijection form of the Alperin--McKay--Navarro conjecture holds for
all finite groups at $p$.
\end{theorem}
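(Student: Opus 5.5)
We argue by induction on the central index $n=m(X)$, proving $\mathsf R(X)$ simultaneously for all finite ambient groups $B$. Assume $\mathsf R(W)$ for every $W$ with $m(W)<n$; this is Hypothesis~\ref{r4:1}. Fix $X\trianglelefteq B$ and a $p$-subgroup $D\leq X$. We may assume $\Irr_0(X\mid D)\neq\emptyset$; otherwise Brauer's first main theorem makes the local set empty too, and the empty bijection works. Then $D$ is a defect group, so $O_p(X)\leq D$. If $D\trianglelefteq X$ we take the identity with identical projective representations. If $X$ is abelian the statement is trivial. We then split into three cases according to the normal structure of $X$ inside $B$, following \cite[Section~7]{NS14}.

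\emph{Descent case.} Suppose there is a $B$-normal subgroup $K\leq X$ such that $Y:=KN_X(D)<X$ and, for every relevant $\zeta\in\Irr_0(K\mid K\cap D)$, the inertia group $T=X_\zeta$ satisfies $|T:K|<n$. Typically $K$ is chosen with $Z(X)\leq K$ and $K\not\leq Z(X)$, which gives $|X:K|<m(X)$ when $K$ properly contains $Z(X)$. By Clifford theory and \cite[Proposition~2.5]{NS14}, every $\chi\in\Irr_0(X\mid D)$ has a Clifford correspondent over some $\zeta$ with defect group $D$. Proposition~\ref{r4:18} then gives strong fiber bijections from $T$ to $KN_T(D)$ with the prescribed local overgroups. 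These are glued by \cite[Theorem~3.3]{ChenCT} into $u:\Irr_0(X\mid D)\to\Irr_0(Y\mid D)$ as in \eqref{r7:eq:4}. Next, $m(Y)<n$ should follow because $Y<X$ contains $Z(X)$. If that fails, we iterate the same step inside $Y$, using that $N_Y(D)=N_X(D)$. Applying $\mathsf R(Y)$ with ambient group $N_B(Y)\supseteq KN_B(D)$ gives $\Pi_{Y,D}$, and we set $\Omega_D=\Pi_{Y,D}\circ u$. Equivariance composes. The block relations compose by transitivity of $\geq_b$: this uses the same projective-representation pairs, with restriction to overgroups via \cite[Lemma~1.10]{ChenCT}. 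Block induction is transitive, so \eqref{r7:eq:2} holds.

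\emph{Remaining configurations.} If no such descent exists, we use minimality to show that every $B$-normal subgroup $K$ either is central in $X$ or satisfies $KN_X(D)=X$. Choose $K$ minimal among normal subgroups of $B$ contained in $X$ with $K\not\leq Z(X)$, taken modulo $Z(X)$. If $K/(K\cap Z(X))$ is a $p$-group, we can arrange $X=KD$ with $K\cap D\leq Z(X)$, perhaps after replacing $K$ by a $p'$-part, and Corollary~\ref{r3:2} applies directly. Otherwise $K$ is a central product of quasisimple components. If $D_0=K\cap D$ is central in $K$, we again land in the central-defect configuration via Theorem~\ref{r3:1}. If $D_0$ is non-central, it is radical, since it is a defect group. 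Theorem~\ref{r6:7} then gives a proper $N_B(D_0)$-invariant $M<K$ together with a strong correspondence. We lift it to $X$ using \cite[Theorem~3.2]{ChenCT} with upper defect group $D$, obtaining a correspondence onto characters of the proper subgroup $MN_X(D_0)$. We finish by induction there, composing as in the descent case.

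\emph{Main obstacle.} The hardest step is the case analysis showing that the three alternatives are exhaustive. In particular, in the non-descent situation we must guarantee that the chosen normal subgroup forces either $X=KD$ with central intersection, or a component configuration, and that the relevant proper subgroups really have smaller central index. I would first try the Murai-style argument: pick $K$ maximal such that $|X_\zeta:K|<m(X)$ fails, and show that $X/K$ is then a $p$-group or $K$ is generated by components. The final sentence of Theorem~\ref{r7:1} follows from Proposition~\ref{red:fixedpoints} with $B=X=G$.
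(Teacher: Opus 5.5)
Your overall architecture is the paper's. It uses strong induction on $m(X)$ over all ambient groups, a descent case built from Proposition~\ref{r4:18} and \cite[Theorem~3.3]{ChenCT} (the paper's Case~1), the central-defect DGN correspondence, and the component transfer of Theorem~\ref{r6:7} lifted by \cite[Theorem~3.2]{ChenCT}. The descent and component cases then finish by composition, covered by \cite[Lemma~1.6]{ChenCT}. Two points about your descent case:
\begin{itemize}
\item $m(Y)<m(X)$ is automatic, since $Z(X)\le C_X(D)\le Y<X$, so no iteration is needed.
\item The inertia-index hypothesis is automatic for every $K\unlhd B$ with $Z(X)<K\le X$, because $|X_\zeta:K|\le|X:K|<m(X)$.
\end{itemize}
So the only obstruction to descent is $KN_X(D)=X$. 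Your fallback plan of choosing $K$ maximal for which the index bound fails targets a condition that never fails once $K>Z(X)$.

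The genuine gap is the exhaustiveness argument. You leave it open, and the dichotomy you propose is wrong.
\begin{itemize}
\item \emph{The $p$-group case.} If $K/(K\cap Z(X))$ is a $p$-group, then $K$ is nilpotent, $O_{p'}(K)\le Z(X)$ and $O_p(K)\le O_p(X)\le D$. Hence $K\le DZ(X)\le N_X(D)$, and non-descent forces $N_X(D)=X$. This case is therefore excluded by the preliminary reduction. It is not a configuration with $K\cap D\le Z(X)$, since $O_p(K)\nleq Z(X)$, so Corollary~\ref{r3:2} is not the relevant tool.
\item \emph{The omitted $p'$-case.} You leave out the case where $K/Z(X)$ is an abelian $p'$-group, which is exactly where $K\cap D\le Z(X)$ arises.
\item \emph{Reaching $X$.} Even in that case you cannot ``arrange $X=KD$''. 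Non-descent gives only $X=KN_X(D)$. After the reduction $B=XN_B(D)$ of Lemma~\ref{r7:2}, this yields $B=KN_B(D)$, so $V=KD\unlhd B$. Corollary~\ref{r3:2} applies to $V$, and you then need \cite[Theorem~3.2]{ChenCT}, with $H=N_B(D)$ and upper defect group $D$, to pass to $X$.
\item \emph{The component subcase.} When $D_0$ is central in $K$, you need $Z(K)\le Z(X)$ to know that $K\cap D$ is central in $KD$.
\end{itemize}

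The paper supplies the missing structural step.
\begin{itemize}
\item Applying the central-defect case to $O_{p'}(X)Z(X)$ and the descent case to $O_p(X)Z(X)\le N_X(D)$ gives $F(X)\le Z(X)$.
\item Since $F^*(X)$ is self-centralizing and $X$ is non-abelian, the layer $K=E(X)$ is non-central, and $Z(K)\le F(X)\le Z(X)$.
\item There is no normal subgroup strictly between $Z(X)$ and $X$ with central defect intersection. Combined with $D\cap KZ(X)=(D\cap K)Z(X)_p$, this forces $D\cap K\nleq Z(K)$.
\end{itemize}
Thus $D\cap K$ is a non-central radical subgroup, and Theorem~\ref{r6:7} applies with $X=KN_X(D)$.

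A chief-factor version of your minimal-$K$ idea would also work, but only with the correct trichotomy. A $p$-factor forces $D\unlhd X$. A $p'$-factor gives central defect intersection. A non-abelian factor gives $Z(K)=Z(X)$ by minimality, so $K/Z(K)$ is a product of non-abelian simple groups.
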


\subsection{Preparatory lemmas}\label{r7:sub:2}

\begin{lemma}\label{r7:2}
Let $X\unlhd A$, and let $D\leq X$ be a $p$-subgroup. In proving
$\mathsf R(X)$, we may assume that $D$ is a defect group of a block of
$X$ and that $A=XN_A(D)$. If $N_X(D)=X$, the identity map has all the
required properties.
\end{lemma}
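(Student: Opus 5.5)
The lemma has three separate assertions: an emptiness reduction, a reduction of the ambient group, and the trivial case. I treat them in that order.

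\emph{Step 1: $D$ may be taken to be a defect group.} If $D$ is not a defect group of any block of $X$, then $\Irr_0(X\mid D)$ is empty. By Brauer's first main theorem, a block of $N_X(D)$ with defect group $D$ induces to a block of $X$ with defect group $D$. So if $D$ is not a defect group of a block of $X$, no block of $N_X(D)$ has defect group $D$, and $\Irr_0(N_X(D)\mid D)$ is empty too. The empty bijection then satisfies all requirements of $\mathsf R(X)$ vacuously.

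\emph{Step 2: replacing $A$ by $A_1=XN_A(D)$.} Suppose the assertion holds for the ambient group $A_1$, and write $\Pi$ for the resulting bijection. The equivariance group required for $A$ is $N_A(D)$, which is contained in $A_1$ and equals $N_{A_1}(D)$. So $\Pi$ is already $N_A(D)\times\mathcal H$-equivariant, and the block-induction condition \eqref{r7:eq:2} does not involve the ambient group.

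For the relation \eqref{r7:eq:3}, I will show that the two triples for $A$ and for $A_1$ coincide. Let $\rho\in\Irr_0(X\mid D)$. If $g\in A_{\rho^{\mathcal H}}$, then $\bl(\rho)^g$ is a Galois conjugate of $\bl(\rho)$. Galois conjugation preserves defect groups, so $D^g$ and $D$ are both defect groups of $\bl(\rho)^g$ and are therefore $X$-conjugate. Hence $g\in XN_A(D)=A_1$. This Frattini argument is the same one used in the proof of Theorem~\ref{r4:15}. It gives
\[
 A_{\rho^{\mathcal H}}=(A_1)_{\rho^{\mathcal H}}.
\]
On the local side, $N_A(D)=N_{A_1}(D)$, so the local triples coincide as well. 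The pair of projective representations obtained for $A_1$ therefore affords the relation for $A$ without change.

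\emph{Step 3: the case $N_X(D)=X$.} Here $\Pi=\mathrm{id}$ is trivially $N_A(D)\times\mathcal H$-equivariant, and block induction from $X$ to $X$ is the identity. For \eqref{r7:eq:3}, the global and local triples are equal, because $N_A(D)=A$ once $A=XN_A(D)$ and $D\unlhd X$. Taking $\mathcal P=\mathcal P'$, conditions (i)--(iii) of Definition~\ref{red:block} are immediate: $C_A(X)\leq A$, the factor sets agree, the central scalars agree, and the comparison functions agree. For (iv), $D$ is a common defect group with $N_X(D)=X\leq X$, and each correspondence $\tau_W$ is the identity, so $\bl(\xi)^W=\bl(\xi)$ holds trivially.

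The only point needing care is Step 2. There one must check that the mixed stabilizers and inertia groups are literally unchanged, so that no new projective representations are needed. The Frattini argument above handles this. Everything else is formal.
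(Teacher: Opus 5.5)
Your proposal is correct and follows the paper's proof essentially step for step. Both sides are empty via Brauer's first main theorem, the Frattini argument gives $A_{\rho^{\mathcal H}}\leq XN_A(D)$ because Galois conjugation preserves defect groups, and the identity correspondence with $\mathcal P=\mathcal P'$ handles $N_X(D)=X$; your explicit check of conditions (i)--(iv) in that last case only spells out what the paper summarizes in one sentence.
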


\begin{proof}
Every block contains a height-zero character. If $D$ is not a defect
group of any block of $X$, then $\Irr_0(X\mid D)$ is empty.
The set $\Irr_0(N_X(D)\mid D)$ is also empty, since Brauer's first
main theorem would otherwise yield a block of $X$ with defect
group $D$.

Set $A_1=XN_A(D)$. Then $N_{A_1}(D)=N_A(D)$. If
$\chi\in\Irr_0(X\mid D)$ and $a\in A_{\chi^{\mathcal H}}$, choose
$\sigma\in\mathcal H$ such that $\chi^a=\chi^\sigma$.
By \cite[Lemma~1.4]{ChenCT}, the blocks of $\chi$ and $\chi^\sigma$ have
the same defect groups. Hence $D^a$ is $X$-conjugate to $D$, and
therefore
\begin{equation}\label{r7:eq:5}
 A_{\chi^{\mathcal H}}\leq A_1.
\end{equation}
Consequently, a block $\mathcal H$-triple relation obtained in $A_1$
already has the required ambient group. If $N_X(D)=X$, then
$X\leq N_A(D)$, so
\eqref{r7:eq:5} gives $A_{\chi^{\mathcal H}}\leq N_A(D)$.
The two triples coincide, and choosing the same associated projective
representation on both sides gives the identity correspondence with
all the required properties.
\end{proof}

\begin{lemma}\label{r7:3}
Let $X\unlhd A$, let $D\leq X$ be a $p$-subgroup, and assume that
$\mathsf R(W)$ holds for every finite group $W$ with $m(W)<m(X)$.
If $K\unlhd A$, $K\leq X$, and $|X:K|<m(X)$, then there is an
$N_A(D)\times\mathcal H$-equivariant bijection
\[
 \Upsilon_{D,K}\colon\Irr_0(X\mid D)\longrightarrow
                         \Irr_0(KN_X(D)\mid D)
\]
preserving block induction. For $\chi'=\Upsilon_{D,K}(\chi)$, it
satisfies
\[
 (A_{\chi^{\mathcal H}},X,\chi)_{\mathcal H}
 \geq_b
 \bigl((KN_A(D))_{(\chi')^{\mathcal H}},KN_X(D),\chi'\bigr)_{\mathcal H}.
\]
\end{lemma}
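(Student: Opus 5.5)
The plan is to derive the lemma from Clifford theory over $K$. On each inertia group we apply the semilinear centralization of Proposition~\ref{r4:18}, and we then assemble the fibres with the inertia-group gluing theorem \cite[Theorem~3.3]{ChenCT}, in the form \eqref{r7:eq:4}.

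\emph{Reductions and choice of a fibre representative.} By Lemma~\ref{r7:2} and \eqref{r7:eq:5} we may assume that $D$ is a defect group of a block of $X$ and that $A=XN_A(D)$; relations obtained in this ambient group already have the required ambient groups. Fix $\chi\in\Irr_0(X\mid D)$. By \cite[Proposition~2.5]{NS14}, as in the proof of Corollary~\ref{r3:2}, we choose a constituent $\zeta$ of $\chi_K$ whose Clifford correspondent $\gamma\in\Irr(T\mid\zeta)$, with $T=X_\zeta$, lies in a block with defect group $D$. For that same $\zeta$ we have $\zeta\in\Irr_0(K\mid D_0)$, where $D_0=K\cap D$. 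Any two such choices of $\zeta$ differ by an element of $N_X(D)$: the admissible $\zeta$ form a single $N_X(D)$-orbit, by conjugacy of defect groups in $T$ and the Frattini argument. The equality of relative heights in \cite[Corollary~2.20]{ChenCT} shows that $\chi\mapsto\gamma$ identifies $\Irr_0(X\mid D,\zeta)$ with $\Irr_0(T\mid D,\zeta)$. The same bookkeeping on the local side identifies $\Irr_0(Y\mid D,\zeta)$ with $\Irr_0(KN_T(D)\mid D,\zeta)$, where $Y=KN_X(D)$ and $Y_\zeta=KN_T(D)$.

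\emph{Fibrewise centralization.} Set $B=A_{\zeta^{\mathcal H}}$. If $a\in B$ and $\zeta^a=\zeta^\sigma$, then $T^a=X_{\zeta^\sigma}=T$, so $K\leq T\trianglelefteq B$. Moreover $\zeta$ is $T$-invariant, and
\[
 |T:K|\leq|X:K|<m(X).
\]
Hence Proposition~\ref{r4:18}, applied with $n=m(X)$ and using the hypothesis $\mathsf R(W)$ for $m(W)<n$, yields an $N_B(D)\times\mathcal H$-equivariant bijection
\[
 u_\zeta:\Irr_0(T\mid D,\zeta^{\mathcal H})\longrightarrow\Irr_0(KN_T(D)\mid D,\zeta^{\mathcal H}).
\]
This bijection preserves fibres and induced blocks and satisfies the strong relation with local overgroup $(KN_B(D))_{(\gamma')^{\mathcal H}}$. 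We make these choices on one representative $\zeta$ of each $N_A(D)\times\mathcal H$-orbit of admissible lower characters, and transport them to the rest of the orbit by conjugation. This is well defined by the mixed equivariance of $u_\zeta$ under the full stabilizer, exactly as in the proof of Theorem~\ref{r4:16}.

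\emph{Gluing.} We then define $\Upsilon_{D,K}(\chi)=u_\zeta(\gamma)^Y$ and invoke \cite[Theorem~3.3]{ChenCT}. Its proof, via \cite[Lemma~3.1]{ChenCT}, induces the projective representations from $B_\gamma$ and $(KN_B(D))_{\gamma'}$. This supplies a single pair affording the common factor sets, central scalars, mixed comparison functions and the intermediate block conditions for
\[
 (A_{\chi^{\mathcal H}},X,\chi)_{\mathcal H}
 \geq_b
 \bigl((KN_A(D))_{(\chi')^{\mathcal H}},Y,\chi'\bigr)_{\mathcal H},
\]
and it needs no equality of the indices $|X:T|$ and $|Y:Y_\zeta|$. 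Taking $W=X$ in the relation gives the block-induction property, and the fibre-wise equivariance gives $N_A(D)\times\mathcal H$-equivariance of the glued map.

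\emph{Expected obstacle.} The main difficulty is the height-zero and defect bookkeeping across the Clifford correspondence on both sides. One must check that $\chi\mapsto\gamma$ and $\chi'\mapsto\gamma'$ restrict to bijections on the height-zero sets with specified defect group $D$, and that the admissible $\zeta$ for $\chi$ and for its image agree. For this I would use the relative-height equality in \cite[Corollary~2.20]{ChenCT} together with the degree formula of Theorem~\ref{r4:16}(ii), which gives $\gamma(1)_p/\gamma'(1)_p=|T:KN_T(D)|_p$, and the identity $|X:Y|_p=|T:Y_\zeta|_p\cdot|X:T|_p/|Y:Y_\zeta|_p$, checked via the Frattini factorization $X_{\chi^{\mathcal H}}=TN(D)$.
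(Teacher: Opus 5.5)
Your overall route is the paper's. You apply Proposition~\ref{r4:18} with the fixed bound $n=m(X)$ to $T=X_\zeta\trianglelefteq A_{\zeta^{\mathcal H}}$, transport the fibre maps along $N_A(D)\times\mathcal H$-orbits, choose the constituent by \cite[Proposition~2.5]{NS14}, and glue with \cite[Theorem~3.3]{ChenCT} in the form \eqref{r7:eq:4}. Your argument that the choice of constituent is unique up to $N_X(D)$ is correct.

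\textbf{A false step.} Your first step claims that the Clifford correspondence identifies $\Irr_0(X\mid D,\zeta)$ with $\Irr_0(T\mid D,\zeta)$, and $\Irr_0(Y\mid D,\zeta)$ with $\Irr_0(KN_T(D)\mid D,\zeta)$. This is false, even under the lemma's hypotheses.

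\emph{Counterexample.} Let $p=2$ and $X=A=(V_4\times C_3)\rtimes\langle s\rangle$, where $s$ swaps two generators of $V_4$ and inverts $C_3$. Take $K=D=V_4$; then $|X:K|=6<12=m(X)$. The group $X$ has two $2$-blocks:
\begin{enumerate}
\item one with defect group $D$;
\item one covering $\Irr(V_4)\otimes 1_{C_3}$, with dihedral defect group of order $8$.
\end{enumerate}
Let $\zeta$ be a linear character of $V_4$ moved by $s$. Then $T=V_4\times C_3$, and all three characters $\zeta\otimes\nu$ lie in $\Irr_0(T\mid D,\zeta)$. But $(\zeta\otimes 1)^X$ lies in the dihedral block and has height one, so $\Irr_0(X\mid D,\zeta)$ has only two elements. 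Here $Y=X$, so the local identification fails in the same way.

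\emph{Why your proposed repair does not work.} The relative-height corollary and the degree ratio of Theorem~\ref{r4:16}(ii) cannot fix this. In the example, $(\zeta\otimes 1)^X$ has exactly the expected $2$-part $|X|_2/|D|$, yet lies outside $\Irr_0(X\mid D)$, so degrees alone do not detect the defect group. Your index identity is just $|X:Y_\zeta|$ computed two ways; no Frattini argument is involved, and $X_{\chi^{\mathcal H}}=X$ trivially.

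\textbf{What the gluing actually needs.} The map $u_\zeta$ must carry $\{\gamma:\gamma^X\in\Irr_0(X\mid D)\}$ onto $\{\gamma':(\gamma')^Y\in\Irr_0(Y\mid D)\}$. This follows from block induction:
\begin{enumerate}
\item The case $W=T$ of the fibre relation gives $\bl(\gamma')^T=\bl(\gamma)$.
\item Blocks of Clifford correspondents induce to the blocks of the induced characters. Transitivity therefore gives $\bl((\gamma')^Y)^X=\bl(\gamma^X)$.
\item The $p$-parts of $\gamma^X(1)$ and $(\gamma')^Y(1)$ are automatically $|X|_p/|D|$ and $|Y|_p/|D|$.
\item If $\bl((\gamma')^Y)$ has defect group $D$, then so does its induced block, by the extended first main theorem, since $N_X(D)\leq Y$.
\item Conversely, if $\bl(\gamma^X)$ has defect group $D$, a defect group of $\bl((\gamma')^Y)$ is subconjugate to $D$. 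It also contains $D$, the defect group of $\bl(\gamma')$, and hence equals $D$.
\end{enumerate}
In particular, $\zeta$ is admissible for $\chi'$ whenever it is admissible for $\chi$.

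The paper leaves exactly this bookkeeping inside \cite[Theorem~3.3]{ChenCT}. Since you invoke the same theorem, your proof goes through once the false identification is replaced by this argument.
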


\begin{proof}
Set $D_0=K\cap D$. For each $D$-invariant character
$\zeta\in\Irr_0(K\mid D_0)$, set $T_\zeta=X_\zeta$.
Then $D\leq T_\zeta$ and $T_\zeta\unlhd A_{\zeta^{\mathcal H}}$.
Indeed, if $\zeta^a=\zeta^\sigma$, then $\zeta^\sigma$ and $\zeta$
have the same inertia group in $X$, and hence $a$ normalizes $T_\zeta$.

Apply Proposition~\ref{r4:18} with the fixed bound $n=m(X)$.
Since
\[
 |T_\zeta:K|\leq |X:K|<n,
\]
its hypotheses hold for $T_\zeta\unlhd A_{\zeta^{\mathcal H}}$.
The quotient used in its centralizing construction has central index
at most $|T_\zeta:K|$; no comparison between $|T_\zeta:K|$ and
$m(T_\zeta)$ is needed. The same external bound is used throughout
the central extension,
the Gallagher correspondence, and the block descent.

Choose the data of Proposition~\ref{r4:18} for representatives of the
$N_A(D)\times\mathcal H$-orbits on these lower characters, and
transport them by conjugation and Galois action. Equivariance under
the mixed stabilizer of each representative ensures that the resulting
fiber maps are independent of the transporting element. They therefore
provide the compatible data required by \cite[Theorem~3.3]{ChenCT}.

By \cite[Proposition~2.5(a),(f)]{NS14}, every
$\chi\in\Irr_0(X\mid D)$ has such a constituent whose Clifford
correspondent has defect group $D$; any two such choices are related
by an element of $N_X(D)$. Applying \cite[Theorem~3.3]{ChenCT} now
gives the map in
\eqref{r7:eq:4} with all the asserted properties.
\end{proof}

\subsection{The central-index induction}\label{r7:sub:3}

\begin{proof}[Proof of Theorem~\ref{r7:1}]
We prove $\mathsf R(X)$ by strong induction on $m(X)$, treating all
finite ambient groups simultaneously. If $m(X)=1$, then $X$ is
abelian, and the identity case of Lemma~\ref{r7:2} applies.

Assume that $\mathsf R(W)$ holds for every finite group $W$ of smaller
central index. Fix $X\unlhd A$ and a $p$-subgroup $D\leq X$.
By Lemma~\ref{r7:2}, we may assume that $D$ is a defect group of a
block of $X$, that $A=XN_A(D)$, and that $N_X(D)<X$.
In particular, $X$ is non-abelian. A normal $p$-subgroup is contained
in every block defect group, so
\begin{equation}\label{r7:eq:6}
 O_p(X)\leq D,\qquad Z(X)_p\leq D.
\end{equation}
We distinguish three cases.

\medskip
\noindent\emph{Case 1: a normal subgroup gives a proper local subgroup.}
Suppose that $K\unlhd A$ satisfies
\[
 Z(X)<K\leq X,\qquad Y=KN_X(D)<X.
\]
Then $|X:K|<m(X)$, and Lemma~\ref{r7:3} provides
$\Upsilon_{D,K}$. Set $E=KN_A(D)$. Since $X\unlhd A$,
\begin{equation}\label{r7:eq:7}
 X\cap E=KN_X(D)=Y,
\end{equation}
so $Y\unlhd E$. Moreover, $Z(X)\leq Z(Y)$ and $Y<X$, whence
\begin{equation}\label{r7:eq:8}
 m(Y)\leq |Y:Z(X)|<|X:Z(X)|=m(X).
\end{equation}
The inclusions $N_A(D)\leq E$ and $N_X(D)\leq Y$ give
\begin{equation}\label{r7:eq:9}
 N_E(D)=N_A(D),\qquad N_Y(D)=N_X(D).
\end{equation}
Apply the inductive hypothesis to $Y\unlhd E$ to obtain
$\Pi_{Y,D}$, and define
\[
 \Pi_{X,D}=\Pi_{Y,D}\circ\Upsilon_{D,K}.
\]
For $\chi'=\Upsilon_{D,K}(\chi)$, the two block
$\mathcal H$-triple relations have the common triple
$(E_{(\chi')^{\mathcal H}},Y,\chi')_{\mathcal H}$.
Their composition is therefore covered by \cite[Lemma~1.6]{ChenCT}.
Together with equivariance and the transitivity of block induction,
this gives \eqref{r7:eq:1}--\eqref{r7:eq:3}.

For the remainder of the proof, we may therefore assume that
\begin{equation}\label{r7:eq:10}
 X=KN_X(D)\quad\text{whenever }K\unlhd A
                    \text{ and }Z(X)<K\leq X.
\end{equation}

\medskip
\noindent\emph{Case 2: a normal subgroup has central defect intersection.}
Suppose that
\[
 K\unlhd A,\qquad Z(X)<K\leq X,\qquad K\cap D\leq Z(X).
\]
By \eqref{r7:eq:10} and $A=XN_A(D)$,
\begin{equation}\label{r7:eq:11}
 X=KN_X(D),\qquad A=KN_A(D).
\end{equation}
Set $V=KD$ and $H=N_A(D)$. Since $H$ normalizes both $K$ and $D$,
and $K$ normalizes $KD$,
the factorization $A=KH$ implies that $V\unlhd A$.
Furthermore,
\[
 V/K\text{ is a }p\text{-group},\qquad
 K\cap D\leq Z(V),\qquad A=VH.
\]
Corollary~\ref{r3:2} supplies the required correspondence between all
height-zero characters of $V$ and $N_V(D)=V\cap H$ with defect
group $D$. Apply \cite[Theorem~3.2]{ChenCT} with normal subgroup $V$,
defect group $D$, normal intermediate group $J=X$, and prescribed
upper defect group $Q=D$. Here $Q\cap V=D$ and
$H=(V\cap H)N_A(D)$, as required. This gives
\[
 \Irr_0(X\mid D)\longrightarrow
 \Irr_0(X\cap H\mid D)=\Irr_0(N_X(D)\mid D),
\]
together with \eqref{r7:eq:2} and \eqref{r7:eq:3}.
This completes Case~2 without a further application of induction.

For the remaining case, we may assume, in addition to
\eqref{r7:eq:10}, that
\begin{equation}\label{r7:eq:12}
 \text{no }K\unlhd A\text{ satisfies }
 Z(X)<K\leq X\text{ and }K\cap D\leq Z(X).
\end{equation}

\medskip
\noindent\emph{Case 3: the layer gives a proper local subgroup.}
We first prove that $F(X)\leq Z(X)$. The subgroup
$O_{p'}(X)Z(X)$ is characteristic in $X$, hence normal in $A$.
Its unique Sylow $p$-subgroup is $Z(X)_p$, which is central; by
\eqref{r7:eq:6}, its intersection with $D$ is therefore $Z(X)_p$.
Equation~\eqref{r7:eq:12} yields
\begin{equation}\label{r7:eq:13}
 O_{p'}(X)\leq Z(X).
\end{equation}
If $O_p(X)\nleq Z(X)$, then $K=O_p(X)Z(X)$ satisfies the
hypotheses in \eqref{r7:eq:10}. On the other hand,
\eqref{r7:eq:6} gives $K\leq N_X(D)$, so
\[
 X=KN_X(D)=N_X(D),
\]
a contradiction. Hence $O_p(X)\leq Z(X)$. Together with
\eqref{r7:eq:13} and the nilpotence of $F(X)$, this implies
\begin{equation}\label{r7:eq:14}
 F(X)\leq Z(X).
\end{equation}

Let $K=E(X)$ be the layer of $X$. It is a central product of
quasisimple components and is characteristic in $X$, so $K\unlhd A$.
The generalized Fitting subgroup $F^*(X)=E(X)F(X)$ contains its own
centralizer in $X$. If $K\leq Z(X)$, then \eqref{r7:eq:14} would
give $F^*(X)\leq Z(X)$, and hence
\[
 X=C_X(F^*(X))\leq F^*(X)\leq Z(X),
\]
contrary to our assumption. Thus $K\nleq Z(X)$. Also, $Z(K)$ is
an abelian normal subgroup of $X$, so
\begin{equation}\label{r7:eq:15}
 Z(K)\leq F(X)\leq Z(X).
\end{equation}

Set $D_0=D\cap K$. To apply \eqref{r7:eq:12}, consider the normal
subgroup $KZ(X)$, which properly contains $Z(X)$. Since
$Z(X)_p\leq D$,
\begin{equation}\label{r7:eq:16}
 D\cap KZ(X)=(D\cap K)Z(X)_p.
\end{equation}
Indeed, $KZ(X)/K$ is a quotient of $Z(X)$ and has Sylow
$p$-subgroup $KZ(X)_p/K$. The image of $D\cap KZ(X)$ is a
$p$-group containing this Sylow subgroup, and hence equals it.
The kernel of this map is $D\cap K$, proving \eqref{r7:eq:16}.
If $D_0\leq Z(K)$, then \eqref{r7:eq:15} and \eqref{r7:eq:16}
would imply $D\cap KZ(X)\leq Z(X)$, contrary to
\eqref{r7:eq:12}. Consequently,
\begin{equation}\label{r7:eq:17}
 D_0\nleq Z(K).
\end{equation}

Choose $\chi\in\Irr_0(X\mid D)$. By
\cite[Proposition~2.5(a),(f)]{NS14}, its restriction to $K$ has a
constituent $\zeta$ whose Clifford correspondent has defect group
$D$. In particular, $D_0$ is a defect group of $\bl(\zeta)$ and is
therefore radical. To see this, its Brauer correspondent in
$N_K(D_0)$ has defect group $D_0$, so
\[
 O_p(N_K(D_0))\leq D_0.
\]
The reverse inclusion follows from $D_0\unlhd N_K(D_0)$.

Theorem~\ref{r6:7} now provides an $N_A(D_0)$-invariant subgroup
$M$ such that
\begin{equation}\label{r7:eq:18}
 N_K(D_0)\leq M<K,
\end{equation}
together with the stated block $\mathcal H$-triple correspondence on
all lower-character fibers. Since $KZ(X)>Z(X)$, equation
\eqref{r7:eq:10} gives
\[
 X=KZ(X)N_X(D)=KN_X(D).
\]
Normality of $K$ in $A$ implies $N_A(D)\leq N_A(D_0)$, whence
\begin{equation}\label{r7:eq:19}
 A=KN_A(D)=KN_A(D_0).
\end{equation}
Set
\[
 H=MN_A(D_0),\qquad Y=X\cap H=MN_X(D_0).
\]
The factorization $A=KN_A(D)$ also yields
\[
 N_A(D_0)=N_K(D_0)N_A(D),
\]
since $N_A(D)$ normalizes $D_0$. Thus
$H=MN_A(D)$ and $Y=MN_X(D)$. These factorizations allow us to apply
\cite[Theorem~3.2]{ChenCT} to the component correspondence.
By \eqref{r7:eq:18},
\begin{equation}\label{r7:eq:20}
 K\cap H=MN_K(D_0)=M,\qquad K\cap Y=M.
\end{equation}
It follows that $Y<X$: otherwise $K\leq Y$ would imply $K=M$.
Moreover, \eqref{r7:eq:19} gives $A=KH$, and
$Y=X\cap H\unlhd H$. All the hypotheses of
\cite[Theorem~3.2]{ChenCT} hold with normal subgroups $K,M$, normal
intermediate group $X$, and specified defect group $D$:
\[
 H=MN_A(D_0),\qquad D\leq N_X(D_0)\leq Y,
 \qquad D\cap K=D_0.
\]
We obtain a block bijection
\[
 \Lambda_X\colon\Irr_0(X\mid D)\longrightarrow\Irr_0(Y\mid D)
\]
satisfying the required block $\mathcal H$-triple relation.

Finally, $Z(X)\leq C_X(D_0)\leq Y$, so
\begin{equation}\label{r7:eq:21}
 m(Y)\leq |Y:Z(X)|<m(X).
\end{equation}
The inclusions $N_A(D)\leq H$ and $N_X(D)\leq Y$ also give
\begin{equation}\label{r7:eq:22}
 N_H(D)=N_A(D),\qquad N_Y(D)=N_X(D).
\end{equation}
Apply induction to $Y\unlhd H$, and define
$\Pi_{X,D}=\Pi_{Y,D}\circ\Lambda_X$.
For $\chi'=\Lambda_X(\chi)$, the two block $\mathcal H$-triple relations have the common middle
triple $(H_{(\chi')^{\mathcal H}},Y,\chi')_{\mathcal H}$.
By \cite[Lemma~1.6]{ChenCT} and \eqref{r7:eq:22}, composition gives
\eqref{r7:eq:3}. The composite also preserves block induction,
the prescribed defect group, and equivariance.

The three cases exhaust the possibilities, completing the induction.
\end{proof}

\subsection{The blockwise conclusion}\label{r7:sub:6}

\begin{corollary}\label{r7:4}
Assume the hypothesis of Theorem~\ref{r7:1}. Let $b$ be a $p$-block
of a finite group $G$, let $D$ be a defect group of $b$, and let $c$
be its Brauer correspondent in $N_G(D)$. Then there is an
$N_{\Aut(G)}(D)_b\times\mathcal H_b$-equivariant bijection
\[
 \Irr_0(b)\longrightarrow\Irr_0(c).
\]
It is the restriction of an
$N_{\Aut(G)}(D)\times\mathcal H$-equivariant bijection on all
height-zero characters with defect group $D$. In particular, for
every $\sigma\in\mathcal H$,
\[
 |\{\chi\in\Irr_0(b):\chi^\sigma=\chi\}|
 =|\{\psi\in\Irr_0(c):\psi^\sigma=\psi\}|.
\]
\end{corollary}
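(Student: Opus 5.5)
The plan is to apply Theorem~\ref{r7:1} with $X=G$ and ambient group $A=G\rtimes\Aut(G)$, where $\Aut(G)$ acts naturally. Since $G\trianglelefteq A$, the assertion $\mathsf R(G)$ gives an $N_A(D)\times\mathcal H$-equivariant bijection
\[
 \Pi_{G,D}\colon\Irr_0(G\mid D)\longrightarrow\Irr_0(N_G(D)\mid D)
\]
with $\bl(\Pi_{G,D}(\chi))^G=\bl(\chi)$. The group $N_A(D)$ acts on $G$ and on $N_G(D)$ through its image in $\Aut(G)$. This image is exactly $N_{\Aut(G)}(D)$, because inner automorphisms coming from $N_G(D)$ are already included there. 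Consequently $\Pi_{G,D}$ is $N_{\Aut(G)}(D)\times\mathcal H$-equivariant, which is the asserted global bijection.

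Next I restrict this map to $\Irr_0(b)$, as in the proof of Proposition~\ref{red:fixedpoints}. Take $\chi\in\Irr_0(b)$. Its image lies in some block $c'$ of $N_G(D)$ with defect group $D$, and $c'^G=b$. The uniqueness part of Brauer's first main theorem then gives $c'=c$. The same argument applied to $\Pi_{G,D}^{-1}$ shows that $\Irr_0(c)$ is hit only by characters of $b$. Hence the map restricts to a bijection $\Irr_0(b)\to\Irr_0(c)$.

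For the stabilizer, take $(a,\sigma)\in N_{\Aut(G)}(D)_b\times\mathcal H_b$. It maps $\Irr_0(b)$ to itself, since $\mathcal H$ and automorphisms preserve heights and defect groups. By equivariance it therefore maps $\Irr_0(c)$ into the image of $\Irr_0(b)$, which is $\Irr_0(c)$. So the restriction is equivariant under this stabilizer; the fact that $\mathcal H_b=\mathcal H_c$, from \cite[Lemma~1.4]{ChenCT}, is consistent with this but not needed.

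For the fixed-point formula, first let $\sigma\in\mathcal H_b$. Equivariance restricts the bijection to a bijection between the $\sigma$-fixed points of $\Irr_0(b)$ and of $\Irr_0(c)$. Now let $\sigma\notin\mathcal H_b$. Any $\chi\in\Irr_0(b)$ with $\chi^\sigma=\chi$ would force $b^\sigma=b$, so the left-hand set is empty. By the same reasoning with $\mathcal H_c=\mathcal H_b$, the right-hand set is also empty.

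The only mildly delicate point is identifying the action of $N_A(D)$ with that of $N_{\Aut(G)}(D)$ in the semidirect product. Everything else is bookkeeping with Brauer's first main theorem.
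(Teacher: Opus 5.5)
Your proposal is correct and follows the paper's proof. You apply $\mathsf R(G)$ with ambient group $G\rtimes\Aut(G)$, restrict to $\Irr_0(b)\to\Irr_0(c)$ using $\bl(\Pi_{G,D}(\chi))^G=\bl(\chi)$ together with uniqueness in Brauer's first main theorem, and then obtain the equivariance and the fixed-point equality by splitting on whether $\sigma$ stabilizes $b$. One small point: the image of $N_A(D)$ in $\Aut(G)$ is $N_{\Aut(G)}(D)$ because the complement copy of $N_{\Aut(G)}(D)$ lies in $N_A(D)$ and every element of $N_A(D)$ induces an automorphism normalizing $D$, not because of the remark about inner automorphisms from $N_G(D)$.
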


\begin{proof}
Apply $\mathsf R(G)$ with ambient group $G\rtimes\Aut(G)$.
Equation~\eqref{r7:eq:2} and the uniqueness assertion in Brauer's
first main theorem show that the global bijection restricts to a bijection from
$\Irr_0(b)$ onto $\Irr_0(c)$. The equivariance of the global bijection
gives the stated equivariance of its restriction. If $\sigma$ stabilizes
$b$, then it also stabilizes $c$, and
the restricted bijection identifies the two fixed-point sets.
If $\sigma$ does not stabilize $b$, it does not stabilize $c$ either,
and both fixed-point sets are empty.
\end{proof}

\begin{remark}\label{r7:5}
The proof uses only induction on $m(X)$. Each application to a proper
subgroup is justified by a strict inequality for its central index.
In Proposition~\ref{r4:18}, induction is applied to a quotient of a
central extension whose central index is strictly below the same
external bound. Induction of projective representations and the
resulting gluing
construction, without requiring equal indices for the inducing
subgroups, are treated in \cite[Lemma~3.1]{ChenCT} and
\cite[Theorem~3.3]{ChenCT}; the block conditions
for the projective representations constructed from the components are established in
Section~\ref{sec:components}. The central-defect DGN correspondence
is the result from the related DGN paper recorded in
Section~\ref{sec:inputs},
and semilinear centralizing descent is proved in
Section~\ref{sec:centralization}. The four transfer results recalled
above are therefore available
under the hypothesis of Theorem~\ref{r7:1}, namely the condition on
individual simple groups in Definition~\ref{r6:1}.

A version for a prescribed collection of simple groups must assume
that the relevant non-abelian simple sections of $X$ satisfy this
condition, or work in a class closed under the subgroups, quotients,
and finite central extensions used in the proof. Here a simple section
means a simple quotient of a subgroup. An assumption concerning only
the non-abelian composition factors of $X$ does not, in general,
justify the subgroup induction above. In either formulation, the
finite ambient group remains arbitrary.
\end{remark}

\subsection{Arithmetic and structural consequences}\label{sec:consequences}

We record several consequences of the equivariant bijections above.
The arithmetic and local character-theoretic results used here are
established in the cited literature. The reduction theorem makes
them available under the iAMN hypothesis on simple groups; the
results that already have independent proofs are identified below.

\begin{corollary}\label{red:classical-consequences}
Assume that every non-abelian finite simple group satisfies iAMN
for $p$. Let $G$ be a finite group and $P\in\Syl_p(G)$.
Then the Alperin--McKay conjecture holds for every $p$-block of $G$,
and there is an $\mathcal H$-equivariant bijection
\[
 \Irr_{p'}(G)\longrightarrow\Irr_{p'}(N_G(P)),
\]
where $\Irr_{p'}$ denotes the set of irreducible characters of
degree prime to $p$.

If $p$ is odd and divides $|G|$, let $\tau\in
\Gal(\mathbb Q(\zeta_{|G|})/\mathbb Q)$ fix all $p'$-roots of
unity in this field and have order $p-1$. Then
\[
 N_G(P)=P
 \quad\Longleftrightarrow\quad
 \Irr_{p'}(G)^\tau=\{1_G\}.
\]
If $p=2$, let $\rho\in\mathcal H_2$ fix all $2$-power roots of
unity and square all odd-order roots of unity. Then
\[
 N_G(P)=P
 \quad\Longleftrightarrow\quad
 \Irr_{2'}(G)^\rho=\Irr_{2'}(G).
\]
\end{corollary}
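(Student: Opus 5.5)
The plan has three parts. First derive the Alperin--McKay assertion and the McKay-type bijection from Theorem~\ref{r7:1} and Corollary~\ref{r7:4}. Then obtain the two self-normalizing Sylow criteria by transferring the Galois fixed-point counts to $N_G(P)$, where they are elementary.

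For the Alperin--McKay part, I apply Corollary~\ref{r7:4} to each block $b$ of $G$ with defect group $D$. The bijection $\Irr_0(b)\to\Irr_0(c)$ gives $|\Irr_0(b)|=|\Irr_0(c)|$ directly. For the $p'$-degree statement, I use $\mathsf R(G)$ from Theorem~\ref{r7:1} with $X=A=G$ and $D=P$. A character has $p'$-degree exactly when it has height zero in a block of maximal defect, and every Sylow subgroup is conjugate to $P$. Hence $\Irr_{p'}(G)=\Irr_0(G\mid P)$. In the same way $\Irr_{p'}(N_G(P))=\Irr_0(N_G(P)\mid P)$, since $P\unlhd N_G(P)$ is contained in every defect group of $N_G(P)$ and is Sylow. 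The bijection $\Omega_P$ is $N_G(P)\times\mathcal H$-equivariant, so in particular it is $\mathcal H$-equivariant, which proves the first assertion.

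For the criteria, the only remaining task is local: an $\mathcal H$-equivariant bijection identifies fixed-point sets of each $\sigma\in\mathcal H$. I first check that the specified $\tau$ (extended to $\mathbb Q^{\mathrm{ab}}$ so that it fixes all $p'$-roots of unity) and $\rho$ lie in $\mathcal H$. For $\tau$ this holds because it acts trivially, i.e.\ as $p^0$, on $p'$-roots of unity. For $\rho$ it holds because $p=2$ and squaring is the Frobenius power. Values of characters of $G$ lie in $\mathbb Q(\zeta_{|G|})$, so only the restriction matters. The criteria thus reduce to the group $N=N_G(P)$:
\[
 N=P\iff \Irr_{p'}(N)^\tau=\{1_N\}\ (p\text{ odd}),\qquad
 N=P\iff \Irr_{2'}(N)^\rho=\Irr_{2'}(N).
\]
Here $\Irr_{p'}(N)$ is in bijection with $\Irr_{p'}(N/P')$. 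If $N=P$, then $\Irr_{p'}(P)$ consists of the linear characters of $P$, whose values are $p$-power roots of unity. For $p$ odd, $\tau$ acts on these by an automorphism of order $p-1$ that fixes no nontrivial $p$-power root of unity, so only $1_P$ is fixed. For $p=2$, $\rho$ fixes $2$-power roots of unity, so every such character is fixed.

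The converse is the main point. Suppose $N>P$, and note that $N/P$ is a nontrivial $p'$-group.
\begin{itemize}
\item For $p$ odd, I would produce a nontrivial $\tau$-fixed character. Inflating any nontrivial $\psi\in\Irr(N/P)$ gives a $p'$-degree character with $p'$-valued, hence $\tau$-fixed, values.
\item For $p=2$, I need a non-$\rho$-fixed $2'$-degree character of $N$. Following \cite[Section~5]{Nav04}, I would take a $P$-orbit or a nontrivial linear character of $N/P'$. Since $N/P$ acts on $P/P'$, either $N/P$ acts nontrivially, giving a character of $P/\Phi(P)$ induced up whose values are not rational in the right way, or $N=P\times O_{2'}(N)$ and a nontrivial linear character of the $2'$-part has odd-order root-of-unity values that $\rho$ squares nontrivially.
\end{itemize}

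I expect the $p=2$ converse to be the main obstacle, specifically when the relevant character of $O_{2'}(N)$ is real, or when the nontrivial action on $P/\Phi(P)$ must be converted into a character of odd degree that $\rho$ does not fix. My first attempt would be to cite Navarro's original local argument \cite[Section~5]{Nav04} directly, since the statement is exactly his and only the transfer step depends on the reduction theorem.
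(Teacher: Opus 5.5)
Your proposal follows the paper's proof: Corollary~\ref{r7:4} gives the Alperin--McKay equality, and Theorem~\ref{r7:1} with specified defect group $P$ (via $\Irr_{p'}(G)=\Irr_0(G\mid P)$ and $\Irr_{p'}(N_G(P))=\Irr_0(N_G(P)\mid P)$) gives the $\mathcal H$-equivariant bijection; the self-normalizing criteria then reduce to $N_G(P)$, where the paper simply cites \cite[Theorems~5.2 and~5.3]{Nav04}, as you ultimately propose to do. Your own $p=2$ case split is unnecessary, and its first branch would not work as stated, because the character induced to $N_G(P)$ from the order-two canonical extension of a nontrivial linear character of $P/\Phi(P)$ is rational-valued and hence $\rho$-fixed. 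Instead, if $N_G(P)>P$, inflate a nontrivial linear character $\mu$ of the odd-order (hence, by Feit--Thompson, solvable) group $N_G(P)/P$: it has odd degree and satisfies $\mu^\rho=\mu^2\neq\mu$.
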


\begin{proof}
Corollary~\ref{r7:4} gives the Alperin--McKay equality by taking
the identity Galois automorphism. The $p'$-degree characters are
exactly the height-zero characters in blocks of full defect.
Thus Theorem~\ref{r7:1}, with specified defect group $P$, gives
the asserted equivariant bijection, and hence the Galois--McKay
fixed-point equalities. The self-normalizing criteria now follow
from \cite[Theorems~5.2 and~5.3]{Nav04}.
\end{proof}

For a positive integer $n$, write $\mathbb Q_n=\mathbb Q(\zeta_n)$,
where $\zeta_n$ is a primitive $n$th root of unity. The conductor
$f(\chi)$ of a character $\chi$ is the least positive integer $f$
such that $\mathbb Q(\chi)\subseteq\mathbb Q_f$.
Its \emph{$p$-rationality level} is
$\operatorname{lev}_p(\chi)=v_p(f(\chi))$.
A character is $p$-rational if this level is zero, and almost
$p$-rational if it is at most one.

\begin{corollary}\label{red:arithmetic-consequences}
Assume that every non-abelian finite simple group satisfies iAMN
for $p$. Let $b$ be a $p$-block of $G$ with defect group $D$, and
let $c$ be its Brauer correspondent in $N_G(D)$.
Set $L=\mathbb Q_{|G|}$, let $\Gamma_b$ be the image of
$\mathcal H_b$ in $\Gal(L/\mathbb Q)$, and set $F=L^{\Gamma_b}$.
There is a bijection $\Omega:\Irr_0(b)\longrightarrow\Irr_0(c)$
such that, for every $\chi\in\Irr_0(b)$,
\[
 F\mathbb Q(\chi)=F\mathbb Q(\Omega(\chi)),
 \qquad
 \operatorname{lev}_p(\chi)
 =\operatorname{lev}_p(\Omega(\chi)).
\]
In particular, the two blocks have the same number of height-zero
characters of each $p$-rationality level.
\end{corollary}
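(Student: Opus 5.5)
I will take $\Omega$ to be the restriction to $\Irr_0(b)$ of the bijection supplied by Corollary~\ref{r7:4}. Apply that corollary to $G$ (ambient group $G\rtimes\Aut(G)$, or simply $G$) to get an $\mathcal H_b$-equivariant bijection $\Omega:\Irr_0(b)\to\Irr_0(c)$; recall $\mathcal H_b=\mathcal H_c$. All characters of $G$ and of $N_G(D)$ have values in $L=\mathbb Q_{|G|}$, since $|N_G(D)|$ divides $|G|$. Hence the $\mathcal H$-action on these characters factors through $\Gal(L/\mathbb Q)$, and $\Omega$ is $\Gamma_b$-equivariant.

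For the field statement, fix $\chi$ and $\psi=\Omega(\chi)$. By Galois theory of $L/\mathbb Q$, we have $F\mathbb Q(\chi)=L^{\Gamma_b\cap\Gal(L/\mathbb Q(\chi))}$, and the subgroup $\Gal(L/\mathbb Q(\chi))$ is the stabilizer of $\chi$ in $\Gal(L/\mathbb Q)$. Therefore $\Gamma_b\cap\Gal(L/\mathbb Q(\chi))$ is the stabilizer of $\chi$ in $\Gamma_b$. Equivariance and injectivity of $\Omega$ make this equal to the stabilizer of $\psi$ in $\Gamma_b$. Taking fixed fields gives $F\mathbb Q(\chi)=F\mathbb Q(\psi)$.

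For the level statement, I will show that $\operatorname{lev}_p$ of a character with values in $L$ is determined by its stabilizer in $\Gamma_b$ together with its membership in $\Irr_0(b)$ or $\Irr_0(c)$. Write $|G|=p^am$ with $p\nmid m$, so $\Gal(L/\mathbb Q)=(\mathbb Z/p^a)^\times\times(\mathbb Z/m)^\times$. The image $\mathcal H^L$ of $\mathcal H$ consists of the pairs $(u,p^k)$ with $u$ arbitrary; it contains $\Gal(L/\mathbb Q_m)$, which is the first factor. The level of $\chi$ is the least $e$ such that the subgroup $\Gal(L/\mathbb Q_{p^em})\cong\{u\equiv1\bmod p^e\}$ fixes $\chi$. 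This is because $f(\chi)$ divides $|G|$, so $\operatorname{lev}_p(\chi)=e$ exactly when $\mathbb Q(\chi)\subseteq\mathbb Q_{p^em}$ but not $\mathbb Q_{p^{e-1}m}$; the standard fact that $\mathbb Q(\chi)\subseteq \mathbb Q_{p^e m'}$ with $p\nmid m'$ reduces to this. These subgroups lie inside $\mathcal H^L$.

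The main obstacle is that they must also lie in $\Gamma_b$, not merely in $\mathcal H^L$. I would argue as follows. The group $\Gal(L/\mathbb Q_m)$ fixes all $p'$-roots of unity, so it fixes the central characters reduced modulo $p$. Hence it fixes every block, as in \cite[Lemma~1.4]{ChenCT} and the standard fact that Galois action on blocks factors through the action on $p'$-roots of unity. Thus $\Gal(L/\mathbb Q_m)\le\Gamma_b$. Once this holds, $\chi$ and $\Omega(\chi)$ have the same stabilizer in each subgroup $\Gal(L/\mathbb Q_{p^em})$, so $\operatorname{lev}_p(\chi)=\operatorname{lev}_p(\Omega(\chi))$. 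Counting characters level by level then gives the final assertion.
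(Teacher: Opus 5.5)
Your proposal is correct and follows essentially the same route as the paper: you restrict the bijection of Corollary~\ref{r7:4}, obtain the field equality from equality of the stabilizers of $\chi$ and $\Omega(\chi)$ in $\Gamma_b$, and obtain the level equality from $\Gal(L/\mathbb Q_m)\le\Gamma_b$ together with the fact that level at most $e$ means being fixed by $\Gal(L/\mathbb Q_{p^em})$. The one minor difference is how you justify $\Gal(L/\mathbb Q_m)\le\Gamma_b$. You use reduced central characters, which is valid because this group is the inertia group at $p$ and so acts trivially on the residue field. The paper instead observes that such elements fix all Brauer characters and hence all blocks.
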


\begin{proof}
Take the bijection in Corollary~\ref{r7:4}. Equivariance and
injectivity give $\Gamma_{b,\chi}=\Gamma_{b,\Omega(\chi)}$.
The fixed fields of these stabilizers are respectively
$F\mathbb Q(\chi)$ and $F\mathbb Q(\Omega(\chi))$, proving
the first equality.

Write $|G|=p^a m$, where $(p,m)=1$. Every element of
$I=\Gal(L/\mathbb Q_m)$ lifts to an element of $\mathcal H$
fixing all $p'$-roots of unity. Such an element fixes the Brauer
characters and hence every $p$-block, so $I\leq\Gamma_b$.
For $0\leq r\leq a$, set
$I_r=\Gal(L/\mathbb Q_{p^r m})$.
A character with values in $L$ has $p$-rationality level at most
$r$ precisely when it is fixed by $I_r$.
The bijection therefore preserves every level, as also explained
in \cite[Section~7.1]{HS26}.
\end{proof}

The field equality in Corollary~\ref{red:arithmetic-consequences}
is an equality after adjoining $F$; it does not assert equality of
the absolute fields of values. Nor does it assert that restriction
to $N_G(D)$ preserves the $p$-rationality level of $\chi$.
The latter property is a separate conjecture in \cite{HS26};
see its Theorem~7.2 for the relation with AMN.

Let $B_0(G)$ denote the principal $p$-block. Let $\sigma_p$
fix all $p'$-roots of unity and send every $p$-power root of unity
to its $(1+p)$th power, and define
\[
 k_{0,\sigma_p}(B_0(G))
 =\bigl|\{\chi\in\Irr_0(B_0(G))\mid
                  \chi^{\sigma_p}=\chi\}\bigr|.
\]
For odd $p$, being $\sigma_p$-fixed is equivalent to being almost
$p$-rational. For $p=2$ this equivalence holds for odd-degree
irreducible characters, and hence for the height-zero characters in
$B_0(G)$; see the introduction to \cite{MMRSV26}.

\begin{corollary}\label{red:sylow-consequences}
Assume that every non-abelian finite simple group satisfies iAMN
for $p$. Let $P\in\Syl_p(G)$ with $P\ne1$, and write
$k=k_{0,\sigma_p}(B_0(G))$. Then:
\begin{enumerate}
\item $k\geq\lceil2\sqrt{p-1}\rceil$, and equality implies that
$P$ is cyclic.
\item If $p\in\{2,3\}$, then $P$ is cyclic if and only if $k=p$.
\item If $p=2$, then $|P:\Phi(P)|=4$ if and only if $k=4$.
\item If $p=3$, then $|P:\Phi(P)|=9$ if and only if $k\in\{6,9\}$.
\end{enumerate}
Here $\Phi(P)$ denotes the Frattini subgroup of $P$; the
conditions on $|P:\Phi(P)|$ in \textup{(iii)} and \textup{(iv)}
mean that the minimum number of generators of $P$ is two.
\end{corollary}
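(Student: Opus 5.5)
The plan is to deduce everything from Corollary~\ref{r7:4} applied to the principal block, together with the known local results for groups with a normal Sylow subgroup. Let $b=B_0(G)$ and $c=B_0(N_G(P))$. By Brauer's third main theorem, $c$ is the Brauer correspondent of $b$. The element $\sigma_p$ fixes all $p'$-roots of unity, so it lies in $\mathcal H$ and stabilizes both principal blocks. Corollary~\ref{r7:4} then gives
\[
 k_{0,\sigma_p}(B_0(G))=k_{0,\sigma_p}(B_0(N_G(P))).
\]
Each of the four assertions is invariant under replacing $G$ by $N_G(P)$, since $P$ and $\Phi(P)$ are unchanged. It therefore suffices to prove (i)--(iv) for $N=N_G(P)$.

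Next I reduce to $N/O_{p'}(N)$. Because $P\trianglelefteq N$, the principal block of $N$ is $\Irr(N/O_{p'}(N))$. All its characters have height zero exactly when they have $p'$-degree, and those are the characters of $p'$-degree of $P\rtimes H$, where $H$ is a $p'$-group acting faithfully on $P$. These characters lie over linear characters of $P$, that is, over $\Irr(P/P')$. Using Gallagher's theorem and Clifford theory over $P/\Phi(P)$, counting the $\sigma_p$-fixed characters becomes counting the orbits of $H$ on $\Irr(P/\Phi(P))$, weighted by the $\sigma_p$-fixed characters of the stabilizers. A $p'$-group has $\sigma_p$ acting trivially on its characters, and a linear character of $P$ is $\sigma_p$-fixed exactly when its order divides $p$; for $p=2$ the same holds on odd-degree characters. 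I expect $k$ to be given by an explicit formula in terms of the action of $H$ on the elementary abelian group $P/\Phi(P)$.

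The four claims then become local statements about $P\rtimes H$, and I would cite their established proofs rather than redo them.
\begin{enumerate}
\item The bound $\lceil2\sqrt{p-1}\rceil$, with equality forcing cyclic $P$, is the local computation of \cite[Theorem~A]{MMRSV26}.
\item The criterion for cyclic Sylow subgroups when $p\in\{2,3\}$ is the local case of \cite{RSV20}.
\item The criterion $|P:\Phi(P)|=4$ for $p=2$ is the local case of \cite{NRSV21}.
\item For $p=3$, I would use the local analysis behind \cite[Theorem~A]{KMRS26}. Here $H$ acts on $\mathbb F_3^2$ as a $3'$-subgroup of $\GL_2(3)$, so $H$ embeds in $Q_8$ or $SD_{16}$. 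Running through these subgroups should give $k\in\{6,9\}$ exactly when $P/\Phi(P)$ has rank two. The converse also needs the larger-rank cases to give other values.
\end{enumerate}

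I expect step (iv) to be the main obstacle. The published result gives only one direction in general. The other direction needs a complete local classification: one must exclude $k\in\{6,9\}$ when $P/\Phi(P)$ has rank $1$ or at least $3$. For rank $1$, $k$ divides into small explicit values. For rank at least $3$, I would try to show that the orbit count of $H$ on $\mathbb F_3^r$ is too large, using a coset or orbit-counting bound. Once the local statements are established, the transfer through Corollary~\ref{r7:4} is formal.
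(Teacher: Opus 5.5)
Your reduction matches the paper's. Since $\sigma_p\in\mathcal H$ fixes both principal blocks, Corollary~\ref{r7:4} together with Brauer's third main theorem moves $k$ to $N_G(P)$. Your Clifford count is then exactly the identity
\[
 k=k\bigl(N_G(P)/\Phi(P)O_{p'}(N_G(P))\bigr),
\]
which the paper quotes from \cite[Section~7.2]{MMRSV26}. One step should be made explicit: a $\sigma_p$-fixed $p'$-degree character lies over a $\sigma_p$-fixed linear $\lambda$. Indeed, $\lambda^{\sigma_p}=\lambda^h$ for some $h$ of $p'$-order. On $\mathbb Q(\lambda)$ the element $\sigma_p$ has $p$-power order, and the two actions commute, so $\lambda^{\sigma_p}=\lambda$. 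With this, (i)--(iii) are citations, exactly as in the paper.

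The problem is (iv): you have the two directions reversed. \cite[Theorem~A]{KMRS26} is the implication $k\in\{6,9\}\Rightarrow|P:\Phi(P)|=9$, and it is proved for all finite groups without AMN. So the exclusion of $k\in\{6,9\}$ in rank $1$ and rank $\geq3$ is simply a citation. That is the step you call the main obstacle and leave as an orbit-counting bound you do not actually produce.

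The direction that genuinely needs the equivariant bijection is $|P:\Phi(P)|=9\Rightarrow k\in\{6,9\}$. After your reduction it is a finite check. Here $H$ is a $2$-subgroup of $\GL_2(3)$, hence conjugate into a Sylow $SD_{16}$. Compute $k(\mathbb F_3^2\rtimes H)$ as the sum of $k(H_\lambda)$ over the $H$-orbits on $\Irr(\mathbb F_3^2)$. The duality twist does not change the orbit data here. For
\[
 H=1,\ \langle -I\rangle,\ \langle\operatorname{diag}(-1,1)\rangle,\ C_2\times C_2,\ C_4,\ C_8,\ Q_8,\ D_8,\ SD_{16},
\]
this gives
\[
 k=9,\ 6,\ 9,\ 9,\ 6,\ 9,\ 6,\ 9,\ 9,
\]
respectively. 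This list exhausts the conjugacy classes of $2$-subgroups of $\GL_2(3)$. It is the computation your ``should give'' requires. With it, and with the citation for the converse, (iv) is complete.
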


\begin{proof}
Corollary~\ref{r7:4} and Brauer's third main theorem give
\[
 k_{0,\sigma_p}(B_0(G))
 =k_{0,\sigma_p}(B_0(N_G(P))).
\]
The local principal-block calculation in \cite[Section~7.2]{MMRSV26}
identifies this number with
\[
 k\bigl(N_G(P)/\Phi(P)O_{p'}(N_G(P))\bigr),
\]
where $k(Y)=|\Irr(Y)|$. The same section explains how the local
bound and its equality case yield \textup{(i)}.
Assertion~\textup{(ii)} is \cite[Theorem~A]{RSV20}.
The implications from AMN to \textup{(iii)} and \textup{(iv)}
are recorded in the introduction to \cite{NRSV21}; see also
\cite[Introduction]{KMRS26} for \textup{(iv)}.
\end{proof}

The conclusions in \textup{(i)}, \textup{(ii)} and \textup{(iii)}
of Corollary~\ref{red:sylow-consequences} have independent proofs
in \cite{MMRSV26}, \cite{RSV20} and \cite{NRSV21}, respectively.
For \textup{(iv)}, \cite[Theorem~A]{KMRS26} proves independently
that $k\in\{6,9\}$ implies $|P:\Phi(P)|=9$.
We include these statements to make explicit the arithmetic and
group-theoretic consequences of the reduction, without presenting
the previously established conclusions as new results.

\section{Examples of the inductive condition}\label{sec:verification}

We first establish a criterion for groups whose universal covering
group has only inner automorphisms. The criterion includes the block
condition on every intermediate subgroup. Combined with the
Galois-equivariant correspondence for cyclic defect groups, it verifies
Definition~\ref{r6:1} for several sporadic groups at specified primes.
We then treat Suzuki and small Ree groups in defining characteristic,
using Johansson's Galois-compatible extensions together with the block
criterion of \cite[Theorem~4.4]{NS14}.

\subsection{A criterion for verification}\label{r8:sub:1}
For a centerless group $X$, write $c_x\in\Inn(X)$ for the
automorphism $u\mapsto xux^{-1}$. The map $x\mapsto c_x$ is then
an isomorphism $X\longrightarrow\Inn(X)$. We use the following
multiplication in the semidirect products below:
\[
 (x,c_n)(y,c_m)=(xnyn^{-1},c_{nm}).
\]
Characters of a direct factor are identified with characters of the
corresponding subgroup.

\begin{lemma}\label{r8:1}
Let $X$ be a centerless finite group, let $D\leq X$ be a $p$-subgroup,
and suppose that
\[
 N=N_X(D)\leq L\leq X,\qquad \Gamma=\Inn(X)_D.
\]
If $\chi\in\Irr_0(X\mid D)$ and
$\varphi\in\Irr_0(L\mid D)$ satisfy
\[
 \mathcal H_\chi=\mathcal H_\varphi,
 \qquad \bl(\varphi)^X=\bl(\chi),
\]
then
\begin{equation}\label{r8:eq:1}
 (X\rtimes\Gamma,X,\chi)_{\mathcal H}
 \geq_b
 (L\rtimes\Gamma,L,\varphi)_{\mathcal H}.
\end{equation}
This relation is realized by ordinary representations with trivial
factor sets and trivial mixed comparison functions.
\end{lemma}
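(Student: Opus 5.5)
The plan is to exploit the fact that $X$ is centerless, so $X\rtimes\Gamma$ is, up to the centralizer of $X$, the same as $X$ acting on itself. Set $G=X\rtimes\Gamma$ and $H=L\rtimes\Gamma$. Every $c_n\in\Gamma$ with $n\in N_X(D)$ agrees on $X$ with conjugation by $(n,1)$. Hence the element $(n^{-1},c_n)$ centralizes $X$, and we get
\[
 G=X\times C,\qquad C=C_G(X)=\{(n^{-1},c_n)\mid c_n\in\Gamma\}\cong\Gamma .
\]
This $C$ lies in $H$, because $n\in N\leq L$. It follows that $H=L\times C$, $G=XH$, $X\cap H=L$ and $C_G(X)\leq H$. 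Both $\chi$ and $\varphi$ are $C$-invariant.

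For the mixed stabilizers, note that $(x,c_n)$ acts on $X$ as conjugation by $xn$. Therefore
\[
 (h,\sigma)\in(H\times\mathcal H)_\chi \iff \sigma\in\mathcal H_\chi ,
\]
and the same description holds for $\varphi$. The hypothesis $\mathcal H_\chi=\mathcal H_\varphi$ then gives condition (i) of Definition~\ref{red:block}.

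For the representations, fix ordinary representations $\mathcal X$ of $X$ affording $\chi$ and $\mathcal Y$ of $L$ affording $\varphi$. Extend them to $G_\chi=G$ and $H_\varphi=H$ by letting $C$ act trivially:
\[
 \mathcal P(xc)=\mathcal X(x),\qquad \mathcal P'(yc)=\mathcal Y(y).
\]
This makes $\mathcal P$ and $\mathcal P'$ ordinary representations with trivial factor sets. On $C$ both act by the same scalar $1$, which gives condition (ii).

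For condition (iii), take $a=(h,\sigma)$ with $h=lc\in H$. We have $\mathcal P(hgh^{-1})^\sigma=\mathcal X(l)\mathcal X(g)^\sigma\mathcal X(l)^{-1}$, where $\mathcal X(g)$ means the value on the $X$-component of $g$. Since $\chi^\sigma=\chi$, there is an invertible $S$ with $\mathcal X^\sigma=S\mathcal XS^{-1}$, and this conjugation is compatible with the trivial $C$-part. So $\mu_a=1$, and likewise $\mu'_a=1$, which settles (iii).

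Condition (iv) is the step that needs the real input, the block hypothesis. Intermediate groups are $W=X\times C_1$ with $C_1\leq C$, and $W\cap H=L\times C_1$. The tensor correspondence sends $\chi\times\lambda$ to $\varphi\times\lambda$ for $\lambda\in\Irr(C_1)$. Blocks of a direct product are products of blocks, and block induction is compatible with direct products via Brauer's definition: the central characters factor over class sums of $X\times C_1$. Hence
\[
 \bl(\varphi\times\lambda)^{X\times C_1}=\bl(\varphi)^X\times\bl(\lambda)=\bl(\chi\times\lambda),
\]
and the induction is defined because $\bl(\varphi)^X$ is. The common defect group is $D$, and $N_X(D)=N\leq L$.

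Alternatively, condition (iv) can be checked with the trace criterion of \cite[Theorem~4.4]{NS14}. For a $p'$-element $xc$ with $D\in\Syl_p(C_X(xc))$, the traces are $\chi(x)$ and $\varphi(x)$, and the required congruence follows from the central-form equality of Lemma~\ref{r4:8}(ii) applied to $\bl(\varphi)^X=\bl(\chi)$.

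The main obstacle I expect is the direct-product compatibility of block induction when $C_1$ has order divisible by $p$: one must check that the defect groups of the product blocks (namely $D\times$ a defect group of $\bl(\lambda)$) still satisfy the hypotheses of condition (iv). I would handle this by the trace criterion above, using Lemma~\ref{r4:7} to transfer the Sylow condition between $C_X(x)$ and $C_L(x)$.
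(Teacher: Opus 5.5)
Your proposal is correct and follows essentially the same route as the paper. Your centralizer $C=\{(n^{-1},c_n)\}$ is exactly the factor $1\times N$ under the paper's isomorphism $X\rtimes\Gamma\cong X\times N$, $(x,c_n)\mapsto(xn,n)$, and the representations trivial on $C$, the trivial comparison functions, and the class-sum factorization $\lambda_{b\otimes c}=\lambda_b\otimes\lambda_c$ giving block induction on each $W=X\times J$ all coincide with the paper's proof.

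The worry in your last paragraph is unnecessary. That factorization already shows the induction is defined and correct for every $J$, whether or not $p$ divides $|J|$, and the paper simply adds $N_{X\times J}(D\times Q)=N_X(D)\times N_J(Q)\leq L\times J$, so no trace-criterion fallback is needed. One small slip: in your comparison computation the conjugating matrix should be $\mathcal X(l)^\sigma$ rather than $\mathcal X(l)$, which does not affect $\mu_a=1$.
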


\begin{proof}
Since $X$ is centerless, $\Gamma=\{c_n:n\in N\}$. The inclusion
$N\leq L$ implies that $\Gamma$ acts by inner automorphisms on
both $X$ and $L$, so it fixes $\chi$ and $\varphi$. The map
\begin{equation}\label{r8:eq:2}
 X\rtimes\Gamma\longrightarrow X\times N,
 \qquad (x,c_n)\longmapsto(xn,n),
\end{equation}
is an isomorphism. Indeed, the first coordinate of the image of
$(x,c_n)(y,c_m)$ is $xnym=(xn)(ym)$, and the inverse map sends
$(u,n)$ to $(un^{-1},c_n)$. This isomorphism sends $X$ to $X\times1$ and $L\rtimes\Gamma$
to $L\times N$, and is the identity on the first direct factor.
We verify the required relation in these direct product coordinates.

Set $G=X\times N$ and $H=L\times N$. Then
$G=(X\times1)H$ and
\[
 C_G(X\times1)=1\times N\leq H.
\]
Both $G$ and $H$ act by inner automorphisms on their respective
normal subgroups $X\times1$ and $L\times1$. Consequently,
$G_\chi=G$, $H_\varphi=H$, and
\[
 (H\times\mathcal H)_\chi
 =H\times\mathcal H_\chi
 =H\times\mathcal H_\varphi
 =(H\times\mathcal H)_\varphi.
\]
Choose representations $R$ and $R'$ over a sufficiently large finite
cyclotomic field affording $\chi$ and $\varphi$, respectively, and set
\[
 \mathcal P(x,n)=R(x),\qquad
 \mathcal P'(l,n)=R'(l).
\]
These ordinary representations of $G$ and $H$ restrict to
representations affording the prescribed characters of the normal
subgroups. Their factor sets are trivial, and both send every
element of $1\times N$ to the identity matrix.

Let $a=((l,n),\sigma)\in(H\times\mathcal H)_\chi$.
Since $\chi^\sigma=\chi$, there is an invertible matrix $T_\sigma$
such that
\[
 R(x)^\sigma=T_\sigma R(x)T_\sigma^{-1}\qquad(x\in X).
\]
For $(x,j)\in G$, we then have
\[
 \mathcal P\bigl((l,n)(x,j)(l,n)^{-1}\bigr)^\sigma
 =R(l)^\sigma T_\sigma\mathcal P(x,j)T_\sigma^{-1}
       \bigl(R(l)^\sigma\bigr)^{-1}.
\]
Thus the comparison function for $\mathcal P$ is $\mu_a=1$.
Since $\sigma\in\mathcal H_\varphi$, the same calculation for
$R'$ gives $\mu'_a=1$. The comparison functions therefore agree,
as required; equality of the comparison matrices is not needed.

We now verify the block condition for the same pair of
representations. Let $X\times1\leq W\leq G$. Multiplication by
elements of $X\times1$ shows that every second coordinate occurring
in $W$ also occurs in $W\cap(1\times N)$. Consequently, there is a
unique subgroup $J\leq N$ such that $W=X\times J$, and then
$W\cap H=L\times J$. The pair $\mathcal P,\mathcal P'$ determines
the correspondence
\begin{equation}\label{r8:eq:3}
 \chi\otimes\eta\longmapsto\varphi\otimes\eta
 \qquad(\eta\in\Irr(J)),
\end{equation}
since both representations are trivial on the second direct factor.

Write $B=\bl(\chi)$, $b=\bl(\varphi)$, and $c=\bl(\eta)$.
The corresponding characters in \eqref{r8:eq:3} lie in the
direct product blocks $B\otimes c$ and $b\otimes c$, respectively.
We claim that
\begin{equation}\label{r8:eq:4}
 (b\otimes c)^{X\times J}=B\otimes c.
\end{equation}
Choose a common splitting $p$-modular system $(K,\mathcal O,k)$,
and let $\lambda_d$ denote the central character over $k$ of a block
$d$. For $Y\leq Z$, let $\pi_Y^Z\colon kZ\longrightarrow kY$
be the linear projection that discards the terms supported outside $Y$.
The assumption $b^X=B$ means that
\[
 \lambda_b\bigl(\pi_L^X(z)\bigr)=\lambda_B(z)
 \qquad(z\in Z(kX)).
\]
Every conjugacy class of $X\times J$ is the product of a conjugacy
class of $X$ and a conjugacy class of $J$. On the resulting basis of
class sums,
\[
 \pi_{L\times J}^{X\times J}=\pi_L^X\otimes\operatorname{id}_{kJ},
 \qquad \lambda_{b\otimes c}=\lambda_b\otimes\lambda_c.
\]
The induced central character is therefore
$\lambda_B\otimes\lambda_c=\lambda_{B\otimes c}$.
Hence the block induction in \eqref{r8:eq:4} is defined and has
the stated value. Moreover, if $Q$ is a defect group of $c$, then
$D\times Q$ is a defect group of $b\otimes c$, and
\[
 N_{X\times J}(D\times Q)
 =N_X(D)\times N_J(Q)\leq L\times J.
\]

Finally, the blocks of the prescribed characters have the common
specified defect group $D\times1$, and
\[
 N_{X\times1}(D\times1)=N_X(D)\times1\leq L\times1.
\]
Thus all the conditions of \cite[Definition~1.1]{ChenCT} hold for
the pair $\mathcal P,\mathcal P'$. Transporting the relation
through \eqref{r8:eq:2} proves \eqref{r8:eq:1}.
\end{proof}

\begin{corollary}\label{r8:2}
Let $S$ be a non-abelian finite simple group with trivial Schur
multiplier and trivial outer automorphism group. Suppose that, for
every nontrivial radical $p$-subgroup $R$ of $S$, there is an
$\mathcal H$-equivariant bijection
\[
 \omega_R\colon\Irr_0(S\mid R)\longrightarrow
                 \Irr_0(N_S(R)\mid R)
\]
which maps the height-zero characters of each block onto those of its
Brauer correspondent. Then $S$ satisfies Definition~\ref{r6:1} for
$p$.
\end{corollary}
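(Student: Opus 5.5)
The plan is to reduce everything to Lemma~\ref{r8:1}. Because the Schur multiplier of $S$ is trivial, the universal covering group is $U=S$ and $Z(U)=1$. For every $\vartheta$ this forces $W=Z(U)\cap\ker\vartheta=1$ and $V=U=S$. Triviality of $\operatorname{Out}(S)$ gives $\Aut(S)=\Inn(S)$. For a radical subgroup $R$ this yields
\[
 \Gamma=\Aut(U)_R=\Inn(S)_R=\{c_n:n\in N_S(R)\}.
\]
The central radical $p$-subgroup is $R=1$, and it is covered by the convention in Definition~\ref{r6:1}. So fix a nontrivial radical $R$; it is automatically non-central. I take $L_R=N_S(R)$ and $\omega_R$ as in the hypothesis. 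Since $S$ is simple, $N_S(R)<S$ holds because $R\neq1$ is not normal in $S$. The subgroup $L_R$ is clearly $\Gamma$-stable.

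Next I check the requirements that precede \eqref{r6:eq:1}. Preservation of central characters is vacuous since $Z(U)=1$. The blockwise requirement is exactly the hypothesis on $\omega_R$, because here the Brauer correspondent in $N_U(R)=L_R$ is the block of $L_R$ itself. For $\Gamma\times\mathcal H$-equivariance, the only new point is $\Gamma$-equivariance. Elements of $\Gamma$ act as $c_n$ with $n\in N_S(R)\leq L_R$, so they fix every character of $S$ and of $L_R$ by conjugation. Equivariance therefore reduces to $\mathcal H$-equivariance, which is given.

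It remains to establish \eqref{r6:eq:1} for each $\vartheta\in\Irr_0(S\mid R)$ with $\varphi=\omega_R(\vartheta)$. Since $\Gamma$ acts trivially on characters, $\Gamma_{\vartheta^{\mathcal H}}=\Gamma$. So the required relation is exactly \eqref{r8:eq:1} with $X=S$, $D=R$ and $L=L_R=N_S(R)$. Lemma~\ref{r8:1} needs two inputs:
\begin{itemize}
\item $\mathcal H_\vartheta=\mathcal H_\varphi$, which follows from the $\mathcal H$-equivariance and injectivity of $\omega_R$;
\item $\bl(\varphi)^S=\bl(\vartheta)$, which is the block hypothesis together with Brauer's first main theorem, since $\bl(\varphi)$ is a block of $N_S(R)$ with defect group $R$ matched to the block of $\vartheta$.
\end{itemize}
Lemma~\ref{r8:1} then provides a single pair of ordinary representations affording the full relation, with trivial factor sets and trivial comparison functions, as Definition~\ref{r6:1} demands.

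The only step that needs care is the bookkeeping that identifies the semidirect products $V\rtimes\Gamma_{\vartheta^{\mathcal H}}$ in \eqref{r6:eq:1} with the groups $X\rtimes\Gamma$ of Lemma~\ref{r8:1}, using the same multiplication convention. One must also confirm that the hypothesis's phrase ``maps onto those of its Brauer correspondent'' matches the blockwise clause of Definition~\ref{r6:1} when $L_R=N_U(R)$. I expect both checks to be immediate.
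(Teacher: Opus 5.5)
Your proposal is correct and follows essentially the same route as the paper: take $L_R=N_S(R)$, note that $\Gamma=\Inn(S)_R$ fixes all characters, so $\Gamma$-equivariance and $\Gamma_{\vartheta^{\mathcal H}}=\Gamma$ are automatic, observe $W=1$, and apply Lemma~\ref{r8:1} with $X=S$, $D=R$, $L=L_R$. Your explicit derivation of $\bl(\varphi)^S=\bl(\vartheta)$ from the Brauer-correspondent hypothesis is a detail the paper leaves implicit.
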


\begin{proof}
The universal covering group is $S$ itself. For each $R$, take
$L_R=N_S(R)$. This subgroup is proper: otherwise $R$ would be a nontrivial
normal $p$-subgroup of the non-abelian simple group $S$.
The group $\Gamma=\Aut(S)_R=\Inn(S)_R$ acts by inner automorphisms
on both $S$ and $L_R$. It therefore stabilizes $L_R$ and fixes every character in both sets. Hence $\omega_R$ is
$\Gamma\times\mathcal H$-equivariant. Central characters are
preserved because $Z(S)=1$.

Let $\varphi=\omega_R(\vartheta)$. Equivariance and injectivity give
$\mathcal H_\vartheta=\mathcal H_\varphi$. Also,
$\Gamma_{\vartheta^{\mathcal H}}=\Gamma$, and the subgroup $W$
in Definition~\ref{r6:1} is trivial. Apply Lemma~\ref{r8:1} with
$X=S$, $D=R$, and $L=L_R$. The resulting relation is precisely \eqref{r6:eq:1}, with the
required block condition on every intermediate group. For the central radical subgroup $R=1$, use the identity
correspondence stipulated in Definition~\ref{r6:1}.
\end{proof}

\subsection{An application to cyclic defect groups}\label{r8:sub:2}

\begin{theorem}\label{r8:3}
Let $S$ be a non-abelian finite simple group with trivial Schur
multiplier and trivial outer automorphism group. If $|S|_p=p$, then
$S$ satisfies the inductive condition in Definition~\ref{r6:1} for
$p$.
\end{theorem}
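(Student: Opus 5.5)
The plan is to reduce to Corollary~\ref{r8:2}: it suffices to produce, for each nontrivial radical $p$-subgroup $R$ of $S$, an $\mathcal H$-equivariant bijection $\omega_R\colon\Irr_0(S\mid R)\to\Irr_0(N_S(R)\mid R)$ that is blockwise with respect to Brauer correspondence. Since $|S|_p=p$, a Sylow $p$-subgroup $P$ has order $p$. Any nontrivial $p$-subgroup is therefore a Sylow subgroup, so the only nontrivial radical subgroups are the conjugates of $P$. Because $\Aut(S)=\Inn(S)$, it is enough to treat $R=P$ itself. The central radical subgroup $R=1$ is handled by the convention in Definition~\ref{r6:1}.

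First I would record the block theory at $P$. Every block with defect group $P$ has a cyclic defect group of order $p$. Blocks of defect zero have defect group $1$ and are excluded from $\Irr_0(S\mid P)$. Every character of a block with defect group $P$ has height zero, since $|P|=p$ forces the degrees to have $p$-part $|S|_p/p=1$. Thus $\Irr_0(b)=\Irr(b)$ for each such $b$, and likewise $\Irr_0(c)=\Irr(c)$ for its Brauer correspondent $c$ in $N_S(P)$.

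Next I would invoke the Galois-equivariant correspondence for cyclic defect groups from \cite{Nav04}. For a block $b$ with cyclic defect group, it supplies a bijection $\Irr(b)\to\Irr(c)$ equivariant under $\mathcal H_b$. Here $\mathcal H_b=\mathcal H_c$ by \cite[Lemma~1.4]{ChenCT}. To obtain a global $\mathcal H$-equivariant map, I would choose one representative block $b$ in each $\mathcal H$-orbit of blocks with defect group $P$ and fix such a bijection $\omega_b$. I would then define
\[
 \omega_P(\chi^\sigma)=\omega_b(\chi)^\sigma
 \qquad(\chi\in\Irr(b),\ \sigma\in\mathcal H).
\]
This is well defined because $\omega_b$ commutes with the stabilizer $\mathcal H_b$. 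Moreover, $\bl(\chi^\sigma)=b^\sigma$ has Brauer correspondent $c^\sigma$, so the map sends the characters of each block onto those of its Brauer correspondent. Corollary~\ref{r8:2} then yields the theorem.

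The main obstacle is to confirm that Navarro's cyclic-defect result gives an equivariant bijection under the whole group $\mathcal H_b$, and not just fixed-point equalities for individual $\sigma$. For this I would use the Brauer tree description. The non-exceptional characters are $p$-rational on $p$-singular elements and are permuted by $\mathcal H_b$ compatibly with the local side. The exceptional characters are parametrized by $N_S(P)$-orbits of characters of $P$ on which $\mathcal H_b$ acts through the Galois action on $p$th roots of unity, in the same way on both sides. The $\sigma$-fixed-point equalities for every $\sigma$ in the abelian group $\mathcal H_b$, proved in \cite{Nav04}, show that the two permutation characters of $\mathcal H_b$ agree on cyclic subgroups. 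Matching the orbit structure through the exceptional and non-exceptional parametrization should then give an equivariant bijection. If a clean statement is not available, I would construct it directly from the explicit orbit description of the exceptional characters.
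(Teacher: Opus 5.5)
Your proposal follows the paper's proof essentially step for step: reduce to Corollary~\ref{r8:2}, note that the only nontrivial radical subgroups are the Sylow subgroups, use that every character in a cyclic-defect block has height zero, take the $\mathcal H_B$-equivariant bijection of \cite[Theorem~3.4(a)]{Nav04} on representatives of the $\mathcal H$-orbits of blocks, and transport it along those orbits. The paper takes the full $\mathcal H_B$-equivariance directly from that cited theorem, after passing from a finite cyclotomic field to $\mathcal H$, so the obstacle you anticipate is settled by the citation; your fallback would not work from fixed-point equalities alone, because $\mathcal H_b$ acts through an abelian group that need not be cyclic.
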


\begin{proof}
Let $P\in\Syl_p(S)$ and set $N=N_S(P)$. Then $|P|=p$.
Every nontrivial $p$-subgroup of $S$ is a Sylow subgroup. Such
a subgroup is radical, since $O_p(N_S(P))=P$. We construct the
correspondence required by Corollary~\ref{r8:2} for $P$; the same
construction applies to each conjugate of $P$.

Let $\mathcal B_P$ be the set of blocks of $S$ having the specified
defect group $P$. By \cite[Lemma~1.4]{ChenCT}, this set is $\mathcal H$-stable,
and Brauer correspondence is $\mathcal H$-equivariant. For $B\in\mathcal B_P$, let $b_B$ denote its Brauer
correspondent in $N$. Since these blocks have cyclic defect groups, all their
irreducible characters have height zero.
By \cite[Theorem~3.4(a)]{Nav04}, there is an
$\mathcal H_B$-equivariant bijection
\[
 F_B\colon\Irr_0(B)\longrightarrow\Irr_0(b_B).
\]
The cited theorem is formulated over a finite cyclotomic field.
Taking such a field to contain the character values of $S$ and $N$
and inflating the Galois action to $\mathcal H$ gives the stated
equivariance.

Choose one representative $B$ from each $\mathcal H$-orbit on
$\mathcal B_P$, together with a map $F_B$ as above. For $\sigma\in\mathcal H$ and
$\chi\in\Irr_0(B)$, define
\begin{equation}\label{r8:eq:5}
 \omega_P(\chi^\sigma)=F_B(\chi)^\sigma.
\end{equation}
To verify that this is well-defined, suppose that
$\chi^\sigma=\widetilde\chi^{\widetilde\sigma}$ with
$\chi,\widetilde\chi\in\Irr_0(B)$. Then
$\rho=\sigma\widetilde\sigma^{-1}\in\mathcal H_B$ and
$\widetilde\chi=\chi^\rho$. By the $\mathcal H_B$-equivariance of
$F_B$,
\[
 F_B(\widetilde\chi)^{\widetilde\sigma}
 =F_B(\chi)^{\rho\widetilde\sigma}
 =F_B(\chi)^\sigma.
\]
The chosen representatives lie in disjoint block orbits, which
excludes any further ambiguity.

Thus \eqref{r8:eq:5} defines an $\mathcal H$-equivariant bijection
on the union of the character sets of the relevant blocks. Brauer's first main
theorem gives a bijection between the blocks of $S$ having defect group
$P$ and the blocks of $N$ having defect group $P$. Thus the constructed map is a
bijection
\[
 \omega_P\colon\Irr_0(S\mid P)\longrightarrow\Irr_0(N\mid P)
\]
which maps each block onto its Brauer correspondent.
Corollary~\ref{r8:2} now applies.
\end{proof}

\subsection{Sporadic groups}\label{r8:sub:3}

\begin{corollary}\label{r8:4}
The inductive condition in Definition~\ref{r6:1} holds for each pair
$(S,p)$ in the following table:
\[
 \begin{array}{c|l}
 S & p\\ \hline
 M_{11} & 5,\ 11\\
 J_1    & 3,\ 5,\ 7,\ 11,\ 19\\
 M_{23} & 5,\ 7,\ 11,\ 23
 \end{array}
\]
\end{corollary}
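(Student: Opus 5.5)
The plan is to deduce Corollary~\ref{r8:4} directly from Theorem~\ref{r8:3}. Its hypotheses are three: trivial Schur multiplier, trivial outer automorphism group, and $|S|_p=p$. For the three groups in the table these are standard facts recorded in the ATLAS.

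\emph{Schur multipliers and outer automorphisms.} The groups $M_{11}$, $J_1$ and $M_{23}$ are among the sporadic groups with trivial Schur multiplier. They are also among those with trivial outer automorphism group, so $\Aut(S)=\Inn(S)\cong S$. Hence the universal covering group is $S$ itself, as required in the proof of Corollary~\ref{r8:2}.

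\emph{Sylow subgroups of prime order.} We read this off from the orders
\[
 |M_{11}|=2^4\cdot3^2\cdot5\cdot11,\qquad
 |J_1|=2^3\cdot3\cdot5\cdot7\cdot11\cdot19,\qquad
 |M_{23}|=2^7\cdot3^2\cdot5\cdot7\cdot11\cdot23.
\]
For each pair $(S,p)$ in the table, the prime $p$ divides $|S|$ exactly once. These are precisely the primes $p\neq2,3$ for $M_{11}$ and $M_{23}$, and every odd prime for $J_1$; in particular $p=3$ is allowed for $J_1$, since $|J_1|_3=3$. Thus $|S|_p=p$ in every listed case.

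\emph{Conclusion.} Theorem~\ref{r8:3} now applies to each pair and gives the iAMN condition of Definition~\ref{r6:1}. No step is a genuine obstacle. The only care needed is to quote the multipliers and outer automorphism groups correctly, and to check that no listed prime divides the group order to a higher power.

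Behind Theorem~\ref{r8:3}, the real work was already done. Navarro's equivariant cyclic-defect bijection supplies the character correspondence, and Lemma~\ref{r8:1} supplies the block $\mathcal H$-triple relation using ordinary representations with trivial comparison functions.
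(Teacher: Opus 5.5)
Your proposal is correct and follows the paper's own proof: you check the three hypotheses of Theorem~\ref{r8:3}, namely trivial Schur multiplier, trivial outer automorphism group, and $|S|_p=p$, using the ATLAS data and the factored group orders, and then apply that theorem. Your order factorizations and your identification of the listed primes agree with the paper's.
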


\begin{proof}
All three groups have trivial Schur multiplier and trivial outer
automorphism group. Their orders are
\[
 \begin{aligned}
 |M_{11}|&=2^4\cdot3^2\cdot5\cdot11,\\
 |J_1|&=2^3\cdot3\cdot5\cdot7\cdot11\cdot19,\\
 |M_{23}|&=2^7\cdot3^2\cdot5\cdot7\cdot11\cdot23.
 \end{aligned}
\]
These data are recorded in \cite{Atlas} and in the individual ATLAS
entries \cite{AtlasM11,AtlasJ1,AtlasM23}. For each prime in the
corresponding row, $|S|_p=p$. The result follows from
Theorem~\ref{r8:3}.
\end{proof}

In these examples, the cyclic-defect correspondence supplies the
blockwise Galois-equivariant bijection, while Lemma~\ref{r8:1}
provides the associated representations and all the block relations
required by the inductive condition. The argument above does not
address the omitted primes or groups with nontrivial outer
automorphism group or Schur multiplier. The next subsection uses a
different argument for two families with nontrivial outer automorphism
group.

\subsection{Suzuki and small Ree groups in defining characteristic}\label{r8:sub:4}

The defining-characteristic McKay--Navarro correspondence for Suzuki
and small Ree groups can be strengthened to the block relation
required in Definition~\ref{r6:1}. We verify the additional block
condition using the associated representations from that
correspondence and the trace criterion of Navarro and Sp\"ath.

\begin{theorem}\label{r8:5}
The following simple groups satisfy the iAMN condition of
Definition~\ref{r6:1} in their defining characteristic:
\begin{enumerate}
\item $S={}^2B_2(2^{2m+1})$, with $m\geq2$, for $p=2$;
\item $S={}^2G_2(3^{2m+1})$, with $m\geq1$, for $p=3$.
\end{enumerate}
In each case, the universal covering group is $S$ itself. For every nontrivial
radical $p$-subgroup $R$ of $S$, the local subgroup can be chosen
to be $L_R=N_S(R)$.
\end{theorem}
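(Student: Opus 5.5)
The plan is to verify Definition~\ref{r6:1} through the criterion of Lemma~\ref{r6:2}, using Johansson's Galois-equivariant McKay--Navarro data for these families \cite{Joh22} together with the trace criterion \cite[Theorem~4.4]{NS14}.

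\emph{Step 1: reduction to the Sylow normalizer.} For $m\geq2$ (Suzuki) and $m\geq1$ (Ree), the Schur multiplier of $S$ is trivial, so $U=S$ and $W=1$. Since $p$ is the defining characteristic, the Borel--Tits theorem implies that every nontrivial radical $p$-subgroup is a Sylow subgroup $P=U_0$, with normalizer $B=N_S(P)=P\rtimes T$. Moreover, every $p$-block of positive defect has a Sylow subgroup as defect group. Humphreys' theorem states that the only blocks are the principal block and the Steinberg block of defect zero. So $\Irr_0(S\mid P)=\Irr_{p'}(S)$ and $\Irr_0(B\mid P)=\Irr_{p'}(B)$, each lying in the principal block. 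The block-matching requirement in Definition~\ref{r6:1} is then automatic. Here $\Gamma=\Aut(S)_P=B\rtimes\langle F\rangle$, where $F$ is a field automorphism of odd order $2m+1$. We take $L_P=B$.

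\emph{Step 2: the bijection and the $\geq_c$ part.} Johansson constructs an $\Aut(S)_P\times\mathcal H$-equivariant bijection $\omega_P\colon\Irr_{p'}(S)\to\Irr_{p'}(B)$. For each $\vartheta$ he also supplies extensions of $\vartheta$ and $\omega_P(\vartheta)$ to their stabilizers in $S\rtimes\langle F\rangle$ and $B\rtimes\langle F\rangle$. These extensions have compatible Galois behaviour: the outer stabilizer is cyclic, so extensions exist, and Johansson's construction chooses them with matching $\mathcal H$-action. From these ordinary extensions I build $\mathcal P$ and $\mathcal P'$ on $G_{\bar\vartheta}$ and $H_{\bar\varphi}$ for $G=S\rtimes\Gamma_{\vartheta^{\mathcal H}}$, in the manner of the isomorphism in Lemma~\ref{r8:1}: identify $S\rtimes\Inn(S)_P$ with $S\times B$ and let the extension carry the field automorphisms. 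The factor sets are trivial and the central scalars on $C_G(S)$ agree. Equality of the mixed stabilizers follows from equivariance. The comparison functions are linear characters of the cyclic group $G_{\bar\vartheta}/S$; their equality is exactly the content of Johansson's Galois-compatibility statement.

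\emph{Step 3: block conditions via Lemma~\ref{r6:2}.} The normalizer containment holds trivially because $N_S(P)=B$. The remaining condition is the normalized trace identity of \cite[Theorem~4.4]{NS14}: for every $p'$-element $x\in H_{\bar\varphi}$ with $P\in\Syl_p(C_S(x))$,
\[
\Bigl(\tfrac{|S|_{p'}\tr\mathcal P(x)}{\vartheta(1)_{p'}}\Bigr)^*
=\Bigl(\tfrac{|B|_{p'}\tr\mathcal P'(x)}{\varphi(1)_{p'}}\Bigr)^*.
\]
Such an $x$ centralizes $P$, so it lies in $C_G(P)$. Its image in $\Gamma$ lies in $Z(P)\times\langle F\rangle$ modulo inner parts; being a $p'$-element, it is essentially a field automorphism $F^j$ of $p'$-order times a central element. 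If $F^j$ is nontrivial, then $C_S(F^j)$ is a smaller Suzuki or Ree group, whose Sylow subgroup is strictly smaller than $P$. So the only admissible $x$ are $p'$-elements of $C_G(S)\cdot Z$, and for these both traces are the common scalar times the degrees. The identity then reduces to $(|S|_{p'}/\vartheta(1)_{p'})^*=(|B|_{p'}/\varphi(1)_{p'})^*$ up to the common central scalar. That equality is the equality of central characters on the identity class sums, which holds because both characters lie in principal blocks related by Brauer induction; alternatively, one can check it directly from the degree tables. Theorem~4.4 then yields the block condition on every intermediate group.

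The main obstacle I expect is Step 2: extracting from \cite{Joh22} a single pair of representations whose mixed comparison functions agree for all of $(H\times\mathcal H)_{\bar\vartheta}$. This is delicate when $\vartheta$ is only $F$-semi-invariant up to Galois conjugacy, as happens for the cuspidal characters of Suzuki groups with irrational values. I would try first to realize the Galois action on the extensions as multiplication by the same linear character of $\langle F\rangle$ on both sides, and to read the comparison functions off from that.
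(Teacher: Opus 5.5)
\textbf{Verdict.} Your overall plan is the paper's route, and using Borel--Tits to dispose of non-Sylow radical subgroups is fine. However, the final computation in Step~3 is wrong as written, and the centralizer step is not yet an argument.

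\textbf{Common structure.} Johansson's equivariant bijection supplies the $\geq_c$ part. His extensions, pulled back along the action map $G\to\Aut(S)$, are the identity on $C_G(S)$ and have trivial factor sets and comparison functions. The block condition then comes from the trace criterion \cite[Theorem~4.4]{NS14}, once one shows that only elements acting trivially on $S$ are admissible. The obstacle you flag in Step~2 is not an issue: the paper takes the trivial factor sets and trivial, hence equal, comparison functions directly from Johansson's construction.

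\textbf{The trace identity.} For an admissible $x$ you have $\tr\mathcal P(x)=\epsilon\,\vartheta(1)$ and $\tr\mathcal P'(x)=\epsilon\,\varphi(1)$, with $\epsilon=1$ in this construction. Both degrees are prime to $p$, so they cancel against $\vartheta(1)_{p'}$ and $\varphi(1)_{p'}$. The two normalized traces are therefore $(\epsilon|S|_{p'})^*$ and $(\epsilon|B|_{p'})^*$.

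You dropped the degree from the trace and arrived at $(|S|_{p'}/\vartheta(1)_{p'})^*=(|B|_{p'}/\varphi(1)_{p'})^*$. Given the Sylow congruence, this is equivalent to $\vartheta(1)\equiv\varphi(1)\pmod p$, an Isaacs--Navarro-type degree congruence for the particular bijection.
\begin{itemize}
\item It does not follow from the blocks being Brauer correspondents. Every central character takes the value $1$ on the identity class sum, so that justification is empty.
\item It is automatic for $p=2$, but for $p=3$ it would be an extra property of Johansson's bijection that you have not established.
\end{itemize}
The correct identity needs none of this. It reads $(|S|_{p'})^*=(|B|_{p'})^*$, and it holds because $|S:B|=|S|_{p'}/|B|_{p'}$ is the number of Sylow $p$-subgroups, hence $\equiv1\pmod p$. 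This is exactly how the paper concludes.

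\textbf{The centralizer step.} Your claim that every admissible $x$ acts trivially on $S$ is, as written, only a heuristic: ``lies in $Z(P)\times\langle F\rangle$ modulo inner parts'' and ``essentially a field automorphism'' do not prove it. What is needed is that $C_{\Aut(S)}(P)$ contains no nontrivial $p'$-element.

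The paper proves the stronger statement $C_{\Aut(S)}(P)=Z(P)$ by coordinates. It writes a centralizing element of $N_{\Aut(S)}(P)$ as $uh_\lambda F^k$ and compares its action on $P/Z(P)$ (Suzuki) or on the additive first coordinate (Ree). This forces $\lambda=1$, $k=0$ and $u\in Z(P)$.

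Your subfield idea can also be made rigorous. Let $y=\pi(x)$.
\begin{itemize}
\item If the outer part of $y$ is trivial, then $y\in C_S(P)=Z(P)$, so $y=1$.
\item Otherwise the outer part has $p'$-order $r$. Then $y^r$ is a $p'$-element of $C_S(P)=Z(P)$, so $y^r=1$. By Lang's theorem $y$ is $S$-conjugate to a standard field automorphism. Its centralizer ${}^2B_2(2^{(2m+1)/r})$ or ${}^2G_2(3^{(2m+1)/r})$ has Sylow $p$-subgroups smaller than $P$, contradicting $P\leq C_S(y)$.
\end{itemize}
This works but uses heavier machinery than the paper's direct computation.

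\textbf{Minor point.} $G_{\bar\vartheta}/S\cong\Gamma_\vartheta$ contains $\Inn(B)$, so it is not cyclic. The comparison functions are linear characters of the cyclic quotient $G_{\bar\vartheta}/SC_G(S)\cong\langle F\rangle_\vartheta$, because both representations are trivial on $C_G(S)$.
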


\begin{proof}
Write $q=p^{2m+1}$ and fix a Sylow $p$-subgroup $P$ of $S$.
Set $B=N_S(P)$ and $\Gamma=\Aut(S)_P$. In the stated ranges,
$S$ has trivial center and trivial Schur multiplier. Moreover,
\[
 B=P\rtimes T,\qquad
 \Aut(S)=S\rtimes D,\qquad
 \Gamma=\Inn(B)\rtimes D,
\]
where $T$ is cyclic of order $q-1$ and $D=\langle F_p\rangle$
is the cyclic group of field automorphisms, of order $2m+1$.
We identify $S$ with its inner automorphism group when it occurs
inside $\Aut(S)$. These structural facts and the choice of a
$D$-stable Sylow subgroup are recorded in
\cite[Section~3.1]{Joh22}.

We first prove that
\begin{equation}\label{r8:eq:6}
 C_{\Aut(S)}(P)=Z(P).
\end{equation}
For a Suzuki group, use the coordinates
\[
 P=\{(a,b):a,b\in\mathbb F_q\},\qquad
 Z(P)=\{(0,b):b\in\mathbb F_q\}.
\]
Set $\theta=2^{m+1}$. For $\lambda\in\mathbb F_q^\times$,
the torus element $h_\lambda$ acts by
\[
 (a,b)^{h_\lambda}=(\lambda a,\lambda^{\theta+1}b).
\]
These coordinates and the multiplication in $P$ are given in
\cite[Section~4.3.3, (4.26)--(4.27)]{GKLL08}.
An element centralizing $P$ normalizes $P$ and hence has the form
$uh_\lambda F_2^k$, with $u\in P$ and $0\leq k<2m+1$.
Inner automorphisms induced by $P$ act trivially on $P/Z(P)$,
so centralization forces
\[
 \lambda a^{2^k}=a\qquad(a\in\mathbb F_q).
\]
Taking $a=1$ gives $\lambda=1$, and varying $a$ then gives $k=0$.
The remaining element $u$ centralizes $P$ precisely when
$u\in Z(P)$.

For a small Ree group, set $\theta=3^{m+1}$ and use the
coordinates
\[
 P=\{x(a,b,c):a,b,c\in\mathbb F_q\}
\]
from \cite[Section~4.3.4, (4.31)--(4.32)]{GKLL08}.
The first coordinate is additive under multiplication, and
\[
 x(a,b,c)^{h_\lambda}
 =x(\lambda^{2-\theta}a,\lambda^{\theta-1}b,\lambda c).
\]
Thus inner automorphisms induced by $P$ leave the first coordinate
unchanged. If $uh_\lambda F_3^k$ centralizes $P$, with
$u\in P$ and $0\leq k<2m+1$, then
\[
 \lambda^{2-\theta}a^{3^k}=a\qquad(a\in\mathbb F_q).
\]
Taking $a=1$ and then varying $a$ gives
$\lambda^{2-\theta}=1$ and $k=0$. Since $\theta^2=3q$,
any common divisor of $\theta-2$ and $q-1$ divides $4-3=1$.
Hence $\lambda=1$, and again $u\in Z(P)$. This proves
\eqref{r8:eq:6} in both cases.

We next identify the relevant character sets and blocks.
Fix a common splitting $p$-modular system $(K,\mathcal O,k)$
for the groups and representations considered below.
Since $C_S(P)=Z(P)$ is a $p$-group, the principal block is the
unique block of $S$ with defect group $P$. Indeed, the Brauer
images at $P$ of distinct block idempotents of full defect are
nonzero orthogonal idempotents in the local algebra
$kC_S(P)=kZ(P)$. Thus there is at most one such block, and the
principal block has full defect. The same argument applies to
$B$. Since $P\trianglelefteq B$ is a Sylow subgroup, every block
of $B$ has defect group $P$, so $B$ has only its principal block.
Consequently,
\begin{equation}\label{r8:eq:7}
 \begin{aligned}
 \Irr_0(S\mid P)&=\Irr_0(B_0(S))=\Irr_{p'}(S),\\
 \Irr_0(B\mid P)&=\Irr_0(B_0(B))=\Irr_{p'}(B),
 \end{aligned}
\end{equation}
where $B_0(Y)$ denotes the principal $p$-block of a group $Y$.
The two principal blocks in \eqref{r8:eq:7} are Brauer
correspondents.

By \cite[Proposition~3.4 and Theorem~3.8]{Joh22}, there is a
$\Gamma\times\mathcal H$-equivariant bijection
\[
 \Omega\colon\Irr_{p'}(S)\longrightarrow\Irr_{p'}(B).
\]
Fix $\chi\in\Irr_{p'}(S)$ and set $\psi=\Omega(\chi)$ and
\[
 \Gamma_0=\Gamma_{\chi^{\mathcal H}},\qquad
 E=S\rtimes\Gamma_0,\qquad F=B\rtimes\Gamma_0.
\]
Equivariance gives $\Gamma_\chi=\Gamma_\psi$ and
$\Gamma_{\chi^{\mathcal H}}=\Gamma_{\psi^{\mathcal H}}$.
The construction in \cite[Remark~2.3(c), Propositions~3.6--3.7,
and the proof of Theorem~3.8]{Joh22} provides representations
$\mathcal P$ of $E_\chi$ and $\mathcal P'$ of $F_\psi$
realizing
\[
 (E,S,\chi)_{\mathcal H}\geq_c(F,B,\psi)_{\mathcal H}.
\]
Both factor sets are trivial, and the mixed comparison functions
for these representations agree; in this construction they are
all trivial.

We recall a property of these particular representations that
will be used to verify the block condition. They are obtained
by first extending $\chi$ and $\psi$ to $S\rtimes D_\chi$
and $B\rtimes D_\chi$, respectively, and then applying the
canonical extension over inner automorphisms. Let
\[
 \pi\colon E\longrightarrow\Aut(S),\qquad
 (s,\gamma)\longmapsto c_s\gamma
\]
be the natural action homomorphism, where $c_s$ denotes
conjugation by $s$. Its image is $S\rtimes D_{\chi^{\mathcal H}}$,
and
\[
 \ker\pi=\{(b^{-1},c_b):b\in B\}=C_E(S)\leq F_\psi.
\]
The restrictions of $\pi$ to $E_\chi$ and $F_\psi$ have
images $S\rtimes D_\chi$ and $B\rtimes D_\chi$, respectively.
The representations $\mathcal P$ and $\mathcal P'$ are
the pullbacks of the extensions just described. In particular,
\begin{equation}\label{r8:eq:8}
 \mathcal P(z)=I_{\chi(1)},\qquad
 \mathcal P'(z)=I_{\psi(1)}\qquad(z\in\ker\pi).
\end{equation}
The kernel need not be central in $E$; the argument uses its
trivial action on both representations.

We now apply \cite[Theorem~4.4]{NS14} to the ordinary character
triples $(E_\chi,S,\chi)$ and $(F_\psi,B,\psi)$, using the same
pair $(\mathcal P,\mathcal P')$. Their blocks have the common
defect group $P$, as established above, and $N_S(P)=B$.
The common factor set and central scalar conditions already hold.
It remains to check the normalized trace equality for every
$p'$-element $x\in F_\psi$ such that
$P\in\Syl_p(C_S(x))$. Such an element centralizes $P$, so
\[
 \pi(x)\in C_{\Aut(S)}(P)=Z(P).
\]
Since $\pi(x)$ has $p'$-order, we have $\pi(x)=1$.
Equation~\eqref{r8:eq:8} therefore gives
\[
 \tr\mathcal P(x)=\chi(1),\qquad
 \tr\mathcal P'(x)=\psi(1).
\]
Both characters have height zero and $p'$-degree. Thus the
required trace equality is
\[
 \left(\frac{|S|_{p'}\tr\mathcal P(x)}{\chi(1)_{p'}}\right)^*
 =(|S|_{p'})^*
 =(|B|_{p'})^*
 =\left(\frac{|B|_{p'}\tr\mathcal P'(x)}{\psi(1)_{p'}}\right)^*.
\]
Here $*$ denotes reduction modulo $J(\mathcal O)$.
The middle equality follows from the Sylow congruence
$|S:B|\equiv1\pmod p$, since $|S|_p=|B|_p$.
The trace criterion shows that, for every $S\leq J\leq E_\chi$,
the tensor correspondence afforded by
$(\mathcal P,\mathcal P')$ satisfies
\[
 \bl(\tau_J(\xi))^J=\bl(\xi)
 \qquad\bigl(\xi\in\Irr(J\mid\chi)\bigr),
\]
with the indicated block induction defined.
This is the intermediate-group condition in
Definition~\ref{red:block}\textup{(iv)}. Together with the
$\mathcal H$-triple conditions already verified for the same pair,
it proves
\[
 (S\rtimes\Gamma_{\chi^{\mathcal H}},S,\chi)_{\mathcal H}
 \geq_b
 (B\rtimes\Gamma_{\chi^{\mathcal H}},B,\psi)_{\mathcal H}.
\]
Since $Z(S)=1$, the central-character and central-kernel
requirements of Definition~\ref{r6:1} involve no further quotient.
This proves the condition for $R=P$, and the same argument
applies to every conjugate Sylow subgroup.

Finally, the defining-characteristic block theorem states that
every block of $S$ of positive defect has full defect; see
\cite[Theorem~8.5 and Section~20.3]{Hum05}. Hence, if $R$ is a
nontrivial radical $p$-subgroup that is not a Sylow subgroup,
then $\Irr_0(S\mid R)$ is empty. Brauer's first main theorem
implies that $\Irr_0(N_S(R)\mid R)$ is also empty.
Take $L_R=N_S(R)$ and the empty bijection. This subgroup is
$\Aut(S)_R$-stable and proper in $S$, since $S$ is simple and
$R$ is nontrivial. All conditions involving a pair of characters
are then vacuous. For the central radical subgroup $R=1$, take
$L_R=S$ and the identity correspondence, as prescribed in
Definition~\ref{r6:1}. This completes the verification for every
radical $p$-subgroup.
\end{proof}

The theorem does not include ${}^2B_2(8)$, whose nontrivial
Schur multiplier requires a separate verification on its universal
covering group, or the exceptional groups obtained from the
smallest nonsimple Ree groups.

\Needspace{10\baselineskip}

\end{document}